\newtheorem{lemma}{Lemma}[section]
\newtheorem{theorem}{Theorem}[section]
\theoremstyle{definition}
\newtheorem{definition}[lemma]{Definition}
\theoremstyle{definition}
\newtheorem{remark}[lemma]{Remark}
\theoremstyle{definition}
\global\let\AddToReset=\@addtoreset}
\newcommand{\veps}{{\varepsilon }}
\newcommand{\bx}{\bm x}
\newcommand{\bv}{\bm v}
\newcommand{\R}{\mathbb{R}}
\newcommand{\diver}{{\rm div}}
\newcommand{\bolds}{\bm s}
\newcommand{\bnu}{\bm \nu}
\newcommand{\Chi}{ X}
\newcommand{\Mu}{\mathcal M}
\newcommand{\del}{\partial}
\newcommand{\bV}{\bm V}
\newcommand{\wnu}{w_{\bnu}}
\newcommand{\sns}{{\bs \sigma}^\textrm{NS}}
\newcommand{\alphasum}{\sum_{\alpha=1}^{N}}
\newcommand{\betasum}{\sum_{\beta=1}^{N-1}}
\newcommand\jump[1]{{\ensuremath{[\![ #1 ]\!] }}}
\newcommand{\dx}{\operatorname{d}\bx}
\newcommand{\dz}{\operatorname{d}z}
\renewcommand{\div}{\textrm{div}}
\newcommand{\bs}[1]{\boldsymbol{#1}}
\newcommand{\pl}{\partial}
\newcommand{\eps}{\varepsilon}
\newcommand{\pt}{\partial_t}
\newcommand{\unity}{{\bf 1}}
\begin{document}
\selectlanguage{english}
\begin{center}
\LARGE
Modeling of compressible electrolytes with phase transition\\[4mm]
\end{center}

\begin{center}
Wolfgang Dreyer\footnote{Weierstrass Institute,  
Mohrenstr. 39, 10117 Berlin, Germany, E-Mail:
 Wolfgang.Dreyer@wias-berlin.de, 
 Christiane.Kraus@wias-berlin.de.}, 
Jan Giesselmann\footnote{University of Stuttgart, Department of
  Mathematics,  Pfaffenwaldring 57, 70569 Stuttgart, Germany, E-Mail:
  giesselmann@ians.uni-stuttgart.de. \\ C.K.~would like to thank the 
DFG Research Center ``Mathematics for Key Technologies'' {\sc Matheon} in Berlin (Germany) for supporting this project.}
, Christiane Kraus$^1$
\end{center}





{\it AMS Subject classifications} {78A57, 80A17, 92E20, 35C20, 35R35, 76T10,  76T30, 35Q30,
  35Q35,  76D45, 76N10, 76T99, 76A02,    
  80A22, 82B26, 34B15.}  \\[2mm]
{\it Keywords:
Multi-component flow, phase transition, electrochemical reactions, partial
balances, entropy principle, 
asymptotic analysis, sharp
  interface limit, free boundary problems, Poisson-Boltzmann, Allen-Cahn
  equation, Navier-Stokes system, Euler system.}


\begin{abstract}
A  novel thermodynamically consistent diffuse interface model is derived 
for compressible electrolytes with phase transitions. The fluid mixtures 
may consist of $N$ constituents with the phases  liquid and vapor, where 
both phases may coexist. In addition, all constituents may consist of polarizable
and magnetizable matter.  
Our introduced thermodynamically consistent diffuse interface model 
may be regarded as a generalized model 
of Allen--Cahn/Navier--Stokes/Poisson type for multi-component flows with phase 
transitions and electrochemical reactions. 
 For the introduced diffuse interface model, we investigate physically
 admissible sharp interface limits by  matched asymptotic techniques. 
We consider two scaling regimes, i.e.~a non-coupled and a
coupled regime, where the coupling takes place between the smallness parameter 
in the Poisson equation and the width of the interface. 
We recover in the sharp interface limit a generalized 
 Allen-Cahn/Euler/Poisson system for mixtures with electrochemical
 reactions in the bulk phases equipped with admissible interfacial
 conditions. The interfacial conditions  satisfy,  for instance, a
 generalized Gibbs-Thomson law and a dynamic Young--Laplace law.
\end{abstract}
\section{Introduction}
In this study, we propose a model for chemically
reacting viscous fluid mixtures that may develop a transition
between a liquid and a vapor phase. The mixture consists of $N$
constituents  which may consist of polarizable and magnetizable matter. 
The system is described by $N$ partial mass balance
equations, a single equation of balance for the barycentric
momentum and an equation of Poisson type.  To
describe phase transitions, we introduce an artificial phase
field indicating the present phase by assigning the values 1 and
-1 to the liquid and the vapor phase, respectively. Within the
transition layer between two adjacent phases, the phase field
smoothly changes between 1 and -1. However, usually the transition
layers are very thin leading to steep gradients of the phase
field.

This model belongs to the class of diffuse interface models. An
alternative model class, that likewise represents phase
transitions in fluid mixtures, contains sharp interface models.
From the modeling point of view, sharp interface models have a simpler
physical basis than diffuse interface models. For this reason, there arises
always the non-trivial question if 
 the sharp interface limits of a given diffuse model lead to admissible sharp interface models.
 
While diffuse interface models solve partial differential equations in
the transition region, sharp interface models deal with jump conditions
across the interface between the phases. Sometimes the jump
conditions are mixed with geometric partial differential
equations.
\par 
For the isothermal and quasi-static setting of electrodynamics,  
our newly introduced diffuse interface model is 
given by the following system of PDEs for the partial mass densities $\rho_\alpha,$ the barycentric velocity $\bv,$
the phase field parameter $\chi$ and the electrical potential $\varphi,$ where the equation for $\rho_N$ is replaced by the evolution equation
for $\rho=\sum_{\alpha=1}^N \rho_\alpha:$  
\begin{equation*}
\begin{split}
0=&\del_t \rho_\alpha + \diver (\rho_\alpha \bv ) - 
\diver \Bigg( \betasum M_{\alpha \beta} \bigg( \nabla \frac{\mu_\beta -
  \mu_N}{T} 
+ \frac{1}{T}\bigg( \frac{z_\beta e_0}{m_\beta} - \frac{z_N e_0}{m_N}  \bigg)
\nabla \varphi \bigg) \Bigg) \\ 
& - \sum_{i=1}^{N_R} m_\alpha \gamma_\alpha^i M_{\mathrm{r}}^i
\Bigg(1-\exp\bigg( \frac{1}{kT} \sum_{\beta=1}^N m_\beta \gamma_\beta^i
\mu_\beta\bigg)\Bigg) , \qquad \qquad \qquad \alpha=1, ..., N-1,\\
 0=&\del_t \rho + \diver(\rho \bv), \\
0=& \del_t (\rho \bv) + \diver (\rho \bv \otimes \bv) + \nabla \bigg(\alphasum \rho_\alpha \mu_\alpha - \rho f -W - \frac{\gamma}{2} |\nabla \chi|^2\bigg) 
+ \gamma \, \diver(\nabla \chi \otimes \nabla \chi)
 \\&  - \diver (\sns) 
+\veps_0\diver \left((1+ s(\chi)) \left(\frac{1}{2} |\nabla \varphi|^2 \unity - \nabla \varphi \otimes \nabla \varphi \right)\right),\\
0=& \del_t\chi + \bv \cdot \nabla \chi + \frac{\tau}{\rho} \left( W' -\gamma \Delta \chi + \frac{\del (\rho f)}{\del \chi} - \frac{\veps_0}{2} s'(\chi) |\nabla \varphi|^2 \right),\\
0=&\veps_0 \, \diver ((1+s(\chi)) \nabla \varphi) + n^{\textrm{F}},
\end{split}
\end{equation*}
where  $M_{\alpha \beta}$, $M_\textrm{r}^i$  are the mobilities, $\mu_\alpha$ the 
chemical potentials,  $T$ is the temperature, $m_\alpha$ the atomic mass, 
$e_0$ the elementary charge, $\eps_0$ is the vacuum permittivity, the numbers $z_\alpha$ are integers and  
$k$ is the Boltzmann constant. The system is based on the following free 
energy 
\begin{equation*}
 \rho\psi:= W(\chi) + \frac{\gamma}{2} |\nabla \chi|^2 + h(\chi) \rho\psi_{\textrm{L}}(\rho_1,\dots,\rho_N) 
+ (1-h(\chi)) \rho\psi_{\textrm{V}}(\rho_1,\dots,\rho_N) - \frac{\veps_0}{2} s(\chi) |{\bs E}|^2,
\end{equation*}
where $W(\chi):=(\chi-1)^2(\chi+1)^2$, $\rho\psi_{\textrm{L}}, \rho
\psi_{\textrm{V}}$ are the free energy functions of the pure phases, ${\bs E}$
is the electric field and 
$h:\R \rightarrow [0,1]$ is a smooth interpolation function satisfying
\begin{equation*}
 h(z) = \left\{ \begin{array}{cl}
                 1 &\text{for} \quad z \geq 1,\\ 0&\text{for} \quad z \leq -1,
                \end{array}
\right.
\end{equation*}
such that $h'(z) = 0$ for  all $ |z| \geq 1.$ 
Similarly, we assume
\begin{equation*}
s(\chi)= h(\chi) s_{\textrm{L}}
+ (1-h(\chi)) s_{\textrm{V}},
\end{equation*}
where $s_{\textrm{L/V}}$ are the susceptibilities of the pure phases.
For brevity, we define
\begin{equation*}
\begin{split}
 (\rho f)((\rho_\alpha)_\alpha,\chi)&:= (\rho f)(\rho_1,\dots,\rho_N,\chi)\\
&:=  h(\chi) \rho\psi_{\textrm{L}}(\rho_1,\dots,\rho_N) + (1-h(\chi))
\rho\psi_{\textrm{V}}(\rho_1,\dots,\rho_N).
\end{split}
\end{equation*}
By definition, the chemical potentials are given by 
\begin{equation*}
\mu_\alpha := \frac{\del(\rho \psi)}{\del \rho_\alpha} = \frac{\del (\rho f)}{\del \rho_\alpha}.
\end{equation*}
In addition, $n^{\textrm{F}} := e_0\sum_{\alpha=1}^N \frac{z_\alpha}{m_\alpha} \rho_\alpha$ is the free charge density, $\sns$ denotes the Navier-Stokes stress
and $\tau, \gamma$ are (positive) constants.

If electrical effects are neglected, our compressible model for multi-phase flows reduces to an  Allen--Cahn/Navier--Stokes 
type model which has been studied in \cite{DGK14}. Moreover, in case there is only one constituent
(undergoing liquid-vapor phase transitions) this model is quite similar to the model derived by Blesgen
\cite{Ble99}. Blesgen's  model has been investigated analytically in
\cite{Kot12,FPRS10}, where existence of 
 strong local-in-time solutions and weak solutions has been shown. A modified version of Blesgen's model can be found in \cite{Wit11}. 

Related to our work {\it without chemical reactions} are diffuse interface models for incompressible and 
quasi-incompressible fluids.
A diffuse interface model of Navier-Stokes-Cahn-Hilliard type for two
{incompressible}, viscous Newtonian 
fluids, having the same densities, has been introduced by Hohenberg and 
Halperin in \cite{HH}. That model has been modified in several thermodynamically 
consistent ways such that different densities are allowed, see
e.g. \cite{GPV,LoTr, AGG}. For existence results of 
strong local-in-time solutions and 
weak solutions, we refer to \cite{Abels2, Abels3, ADG13}. A diffuse interface model for 
two incompressible constituents which  permits the transfer of mass between 
the phases due to diffusion and phase transitions has been proposed in
\cite{ADGK13,marseille}. The densities of the fluids may be different, which leads to 
quasi--incompressibility of the mixture.

\par The work is organized as follows.  In the upcoming section we derive the thermodynamically consistent model 
for multi-component flows with phase transitions and electrochemical
reactions, which is the main contribution of this work. 
The third section is devoted to the non-dimensiona\-lization, the introduction
of two interesting scaling regimes of the system and the setting of asymptotic analysis.
Finally, in Sections \ref{sec:si} and \ref{sec:nd}, we determine the sharp interface
limits for the two different scaling regimes introduced previously.


\section{The electrolyte model \label{sec:electrolyte_model}}
\textbf{Constituents and phases.} We consider a fluid mixture
consisting of $N$ constituents $A_1,$ $A_2,...,$ $A_N$ indexed by
$\alpha\in\{1,2,...,N\}$. The constituents have (atomic) masses
$(m_\alpha)_{\alpha=1,2,...,N}$ and may be carrier of charges
$(z_\alpha e_0)_{\alpha=1,2,...,N}$. The constant $e_0$  is the
elementary charge and the numbers $z_\alpha$ are positive or
negative integers including the value zero. All constituents may
consist of polarizable and magnetizable matter.

The fluid mixture may exist in the two phases liquid(L) and
vapor(V). The two phases may coexist. In this paper, we describe
the phases in the diffuse interface setting, where the interface
between adjacent liquid and vapor phases is modeled by a thin
layer. Within the layer, certain thermodynamic quantities smoothly
change from values in one phase to different values in the
adjacent phase. However, usually steep gradients occur.

Among the $N$ constituents we have neutral
molecules and positive and negative ions which are the products of
dissociation reactions. There are $N_R$ reactions, indexed by
$i\in\{1,2,...,N_R\}$, of the general type
\begin{equation}\label{C1}
    a_1^i A_1+a_2^i A_2+...+a_N^i A_N\rightleftarrows
    b_1^i A_1+b_2^i A_2+...+b_N^i A_N.
\end{equation}
The constants $(a_\alpha^i)_{\alpha=1,2,...,N}$ and
$(b_\alpha^i)_{\alpha=1,2,...,N}$ are positive integers and
$\gamma_\alpha^i=b_\alpha^i-a_\alpha^i$ denote the stoichiometric
coefficients of the reaction $i$.

\textbf{Basic quantities and basic variables.} Two phase mixtures
can be modeled within three different model classes, denoted by Class I - Class
III. Class I considers as basic variables the number densities
$(n_{\alpha})_{\alpha=1,2,...,N}$ of the constituents, the
barycentric velocity $\bs v$, the temperature $T$ of the mixture, the
electromagnetic field $(\bs E,\bs B)$ and the phase field $\chi$.
The basic variables of Class II are the number densities
$(n_{\alpha})_{\alpha=1,2,...,N}$, the velocities $(\bs
v_{\alpha})_{\alpha=1,2,...,N}$ of the constituents, the
temperature $T$, the electromagnetic field $(\bs E,\bs B)$  and
the phase field $\chi$. Finally, in Class III we have the number
densities $(n_{\alpha})_{\alpha=1,2,...,N}$, the velocities $(\bs
v_{\alpha})_{\alpha=1,2,...,N}$, the temperatures
$(T_{\alpha})_{\alpha=1,2,...,N}$ of the constituents, the
electromagnetic field $(\bs E,\bs B)$ and the phase field $\chi$.
In this study, we choose a description within Class I.

The mixture occupies a region $\Omega\subset \mathbbm{R}^3$.
 At any time $t\geq 0$, the thermodynamic state of the mixture
 is described by $N$ partial mass densities $(\rho_{\alpha})_{\alpha=1,2,...,N}$,
  the barycentric velocity $\bs v$, the temperature $T$ of the
mixture and the electromagnetic field $(\bs E,\bs B)$. These
quantities may be functions of time $t\geq 0$ and space $\bs x =
(x_i)_{i=1,...,3} = (x_1,x_2,x_3)$. However, the magnetic field
and the temperature as variables appear only in the modeling part 
(Section \ref{sec:electrolyte_model}).
Finally, we restrict ourselves to isothermal processes and, moreover,
we ignore magnetic fields so that $\bs B$ is omitted and T appears
only as a constant parameter in the equations. 

In order to
indicate the present phase at $(t,\bs x)$, we introduce the so
called phase field $\chi$ as a further basic variable. The phase
field assumes values in the interval $[-1,1]$ with $\chi = 1$ in
the liquid and $\chi=-1$ in the vapor.

Multiplication of the number densities by $m_\alpha$ and
$e_0z_\alpha$, respectively, gives the partial mass densities and
the partial free charge densities:
\begin{equation}\label{BQ1}
    \rho_\alpha=m_\alpha n_\alpha,\qquad
    n_\alpha^\textrm{F}=e_0 z_\alpha n_\alpha.
\end{equation}
Multiplication of the velocities by $n_\alpha m_\alpha$ and $n_\alpha z_\alpha e_0$,
respectively, gives the mass fluxes and the free currents:
\begin{equation}\label{BQ2}
    \bs {j}_\alpha=\rho_\alpha \bs {v}_\alpha,\qquad
    \bs {j}_\alpha^F=e_0z_\alpha \bs {v}_\alpha n_\alpha.
\end{equation}
The mass density of the mixture and the barycentric velocity are
defined by
\begin{equation}\label{BQ3}
    \rho=\sum\limits_{\alpha=1}^N \rho_\alpha,\qquad
    \bs v=\frac{1}{\rho}\sum\limits_{\alpha=1}^N \rho_\alpha \bs
    v_\alpha.
\end{equation}
Non-convective mass fluxes and non-convective currents are defined
by
\begin{equation}\label{BQ4}
    \bs {J}_\alpha=\rho_\alpha \bs u_\alpha,\qquad
    \bs {J}_\alpha^{\mathrm{F}}=\frac{z_\alpha e_0}{m_\alpha} \bs {J}_\alpha,\quad\textrm{where}\quad
    \bs{u}_\alpha=\bs{v}_\alpha-\bs v
\end{equation}
denotes the diffusion velocity.  The definitions
\eqref{BQ4}$_{1,3}$ imply the identity
\begin{equation}\label{BQ5}
    \sum\limits_{\alpha=1}^N \bs J_\alpha=0~.
\end{equation}
Total free charge density and total free current are calculated by
\begin{equation}\label{BQ5a}
    n^\textrm{F}=\sum\limits_{\alpha=1}^N n^\textrm{F}_\alpha,\qquad
    \bs{j}^\textrm{F}=n^\textrm{F}\bs v+
    \sum\limits_{\alpha=1}^N {\bs J}_\alpha^{\mathrm{F}}
  ~.
\end{equation}

The application of Maxwell's theory to continuous matter shows
that the total electric charge density $n^\textrm{e}$ and the total
electric current $\bs{j}^\textrm{e}$ consist of two additive
contributions. We write
\begin{equation}\label{BQ6}
    n^\textrm{e}=n^\textrm{F}+n^\textrm{P},\qquad
\bs{j}^\textrm{e}=\bs{j}^\textrm{F}+\bs{j}^\textrm{P}.
\end{equation}
Besides free charge densities and free currents there are charge
densities and currents due to polarization and magnetization \cite{Mue85}.
\begin{equation}\label{BQ7}
    n^\textrm{P}=-\textrm{div}(\bs P),\qquad
    \bs{j}^\textrm{P}=\frac{\partial \bs{P}}{\partial
    t}+\textrm{curl}(\bs{P}\times\bs{v}+{\bs M})~,
\end{equation}
where $\bs{P}$ and $\bs{M}$  denote the vectors of polarization
and magnetization, respectively. Polarization embodies phenomena
that are caused by microscopic charges, for example, atomic
dipoles within atoms and molecules. Microscopic currents are
macroscopically represented by the magnetization vector. 

Finally, we introduce the total number density of the mixture and
the atomic fractions of the constituents.
\begin{equation}\label{BQ9}
    n=\sum\limits_{\alpha=1}^N
    n_\alpha,\qquad y_\alpha=\frac{n_\alpha}{n}\quad\textrm{with}
    \quad \sum\limits_{\alpha=1}^N
    y_\alpha=1~.
\end{equation}

\textbf{Equations of balance for matter.} The basic variables are
determined by a coupled system of partial differential equations
relying on the quasi-static Maxwell equations and balance equations
for matter. At first we introduce the balance equations for matter
within the Class I model where we need the partial equations of
balance for the mass of the constituents and the balance equations
for the momentum and energy of the mixture. They read
\begin{eqnarray}
    \pl_t \rho_\alpha+\textrm{div}(\rho_\alpha\bs{v}+\bs J_\alpha)
    &=&r_\alpha, \qquad \alpha=1,2,...,N, \label{EoB1}\\
    \pl_t(\rho\bs v)+\textrm{div}(\rho\bs v\otimes\bs
    v-\bs{\sigma})&=&\rho\bs{b}+\bs k,\label{EoB2}\\
     \pl_t \Big(\rho e+\frac{\rho}{2}|\bs{v}|^2 \Big)+
     \textrm{div} \Big((\rho e+\frac{\rho}{2}|\bs{v}|^2)\bs v+\bs q -\bs
     v\cdot\bs\sigma \Big)&=&\rho\bs b\cdot \bs v+\pi.
     \label{EoB3}
\end{eqnarray}
Moreover, we propose a balance equation for the phase field,
\begin{equation}\label{EoB4}
    \pl_t (\rho\chi)+\textrm{div}(\rho\chi\bs{v}+\bs J_\chi)=\xi_\chi.
\end{equation}
Besides the basic variables and the diffusion fluxes from the last
paragraph there occur new quantities (here): $r_\alpha$ - mass
production of constituent A$_\alpha$, $\bs{\sigma}$ - stress,
$\rho e$ - internal energy density, $\bs{q}$ - heat flux, $\bs
J_\chi$ - non-convective flux of the phase field, $\xi_\chi$ -
phase field production. The force density is decomposed into two
different types: $\rho \bs b$ - force density due to gravitation
and inertia, $\bs k$ - Lorentz force density due to
electromagnetic fields. Likewise the power of force is decomposed
into: $\rho \bs b\cdot\bs v$ - power due to gravitation and
inertia, $\pi$ - power due to Joule heat. In the following, we
neglect the force density $\bs b$ and set $\bs b=0$.

Forward and backward reactions contribute to the mass production
rate of constituent A$_\alpha$. The corresponding reactions rates
$R_\textrm{f}^i$ and $R_\textrm{b}^i$ give the number of forward and backward
reactions per volume and per time. We write
\begin{equation}\label{EoB4c}
    r_\alpha=\sum\limits_{i=1}^{N_\textrm{R}}
    m_\alpha \gamma_\alpha^i(R_\textrm{f}^i-R_\textrm{b}^i).
\end{equation}
The conservation of charge and mass for every single reaction
$i\in\{1,2,...,N_\textrm{R}\}$ reads
\begin{equation}\label{EoB5b}
\sum\limits_{\alpha=1}^{N}
    z_\alpha\gamma_\alpha^i=0 \quad\textrm{and}\quad
    \sum\limits_{\alpha=1}^{N}
    m_\alpha\gamma_\alpha^i=0,\quad\textrm{implying\quad}
\sum\limits_{\alpha=1}^{N}r_\alpha=0.
\end{equation}
The condition \eqref{EoB5b}$_3$ represents the conservation law of
total mass. 

Summing up the partial mass balances \eqref{EoB1}
yields the total mass balance of the mixture, i.e.
\begin{equation}\label{EoB5a}
    \pl_t \rho+\textrm{div}(\rho\bs{v})=0.
\end{equation}
Herein,  the definitions \eqref{BQ3} and the conditions
\eqref{EoB5b}$_3$ and \eqref{BQ5} have been used.

\textbf{A short reminder on Maxwell's equations.} The
determination of the electromagnetic field $(\bs E,\bs B)$ relies
on Maxwell's equations. They can be written as,  \cite{Mue85},
\begin{alignat}{3}
    \pl_t \bs B+\textrm{curl}(\bs E)&=0,\qquad\qquad~ \textrm{div}(\bs
    B)&=&0,\label{A1-1}\\
-\frac{1}{c^2}\pl_t \bs E+\textrm{curl}(\bs B)&=\mu_0 \bs
j^\textrm{e},\qquad\quad \textrm{div}(\bs
    E)&=&\frac{1}{\eps_0}n^\textrm{e}\label{A1-2}.
\end{alignat}
The electric and magnetic constants are related to the speed of
light by $c^2=1/(\eps_0\mu_0)$. The total electric charge density
$n^\textrm{e}$ and the total electric current density $\bs{j}^\textrm{e}$
are given by the representations \eqref{BQ5a}--\eqref{BQ9}.

Suitable multiplications of Maxwell's equations by $\bs E$ and
$\bs B$, respectively, lead to two new equations of balance, viz.
\begin{equation}\label{A1-3}
    \pl_t \bs m^\textrm{e}+\textrm{div}(-\bs\sigma^\textrm{e})=
    -n^\textrm{e}\bs E- \bs j^\textrm{e}\times\bs B,\qquad
    \pl_t e^\textrm{e}+\textrm{div}(\bs q^\textrm{e})=-\bs j^\textrm{e}\cdot\bs E~.
\end{equation}
These equations are interpreted as the equations of balance for
electromagnetic momentum and electromagnetic energy. The
corresponding densities and fluxes have the unique representations
\begin{alignat}{3}
    \bs m^\textrm{e}&=\eps_0\bs E\times\bs B, \qquad\qquad
     &\bs \sigma^\textrm{e}&=\eps_0 \bs E\otimes\bs
     E+\frac{1}{\mu_0}\bs B\otimes\bs B-\frac{1}{2}(\eps_0 |\bs E|^2+\frac{1}{\mu_0}|\bs
     B|^2)\unity,\label{A1-4}\\
     e^\textrm{e}&=\frac{\eps_0}{2}|\bs
    E|^2+\frac{1}{2\mu_0}|\bs B|^2,\quad~ 
   \quad & \bs q^\textrm{e}&=\frac{1}{\mu_0}\bs E\times\bs B.\label{A1-5}
\end{alignat}
The balance equations of the electromagnetic momentum and energy
are now added to the corresponding balance equations of matter. We
obtain the equations of balance for total momentum and total
energy. The postulate that total momentum and total energy both
are conserved quantities implies the identification of the Lorentz
force and its power, viz.
\begin{equation}\label{A1-6}
\bs k=n^\textrm{e}\bs E+ \bs j^\textrm{e}\times\bs B,\qquad
    \pi=\bs j^\textrm{e}\cdot\bs E~.
\end{equation}
\textbf{On the quasi-static setting of electrodynamics.} There is
large confusion in the electrochemical literature about the
quasi-static approximation of Maxwell's equations.  For this
reason, a short discussion of the subject is necessary.

At first we rescale time, space, the magnetic and the electric field and the conductivity $\sigma$ according
to
\begin{equation}\label{A1-7}
t= t_0 \tilde t, \quad \bs x= x_0\tilde{ \bs x}, \quad \bs E= E_0 \tilde{ \bs E}, \quad \bs B = \frac{E_0}{c} \tilde{ \bs B},\quad  \sigma = \sigma_0 \tilde\sigma,
\quad n^{\textrm{F}} = n_0^{\textrm{F}} \tilde{n}^{\textrm{F}}.
\end{equation}
From Ohm's law we know that $\bs j^{\textrm{e}} = \sigma_0 \tilde \sigma E_0
\tilde{\bs E}=: \sigma_0 E_0 \tilde{\bs j}^{\textrm{e}}.$ 
The rescaled magnetic field has the same dimension as the electric
field so that both fields can be compared. Furthermore, we set
$x_0/t_0=v_0$ with $v_0$ as a typical diffusion velocity of matter,
i.e.~we set $n_0^{\textrm{F}}v_0 = \sigma_0 E_0.$
Suppressing the tildes in our notation we obtain
\begin{alignat}{3}
    \frac{v_0}{c}\pl_t \bs B+\textrm{curl}(\bs E)&=0,\qquad\qquad\quad~
   & \textrm{div}(\bs
    B)&=0,\label{A1-8}\\
-\frac{v_0}{c}\pl_t \bs E+\textrm{curl}(\bs B)&=c\mu_0 x_0 \sigma_0\bs
j^\textrm{e},\qquad\quad & \textrm{div}(\bs
    E)&=\frac{\sigma_0 t_0}{\eps_0}n^\textrm{e}\label{A1-9}.
\end{alignat}
The dimensionless quantity $c \mu_0 \sigma_0 x_0$ is of order $1$.
The time derivatives in the two equations (\ref{A1-1})$_1$,
(\ref{A1-2})$_1$ are thus multiplied by the small factor $v_0/c$.
Returning to dimensional quantities, the leading order Maxwell's equations reduce to
\begin{alignat}{3}
    \textrm{curl}(\bs E)&=0,\qquad\qquad\qquad &\textrm{div}(\bs
    B)&=0 ,\label{A1-10}\\
\textrm{curl}(\bs B)&=\mu_0 \bs j^\textrm{e},\qquad\quad~
&\textrm{div}(\bs
    E)&=\frac{1}{\eps_0}n^\textrm{e}\label{A1-11},
\end{alignat}
which we call the quasi-static version of the Maxwell equations.
A similar argument shows that the Lorentz force is given by
\begin{equation}\label{EoB5}
    \bs k=n^\textrm{e} \bs E,
\end{equation}
and its power is as before.
In the quasi-static setting the electric field can be derived from
an electric potential $\varphi$. Thus, we have by \eqref{A1-2}
\begin{equation}\label{EoB6}
    \bs E = -\nabla\varphi,\qquad
    \eps_0\Delta\varphi=-n^\textrm{e},
\end{equation}
and the magnetic field follows from \eqref{A1-11}$_1$.

Note that the modeling part of this paper relies on the full system of
Maxwell's equations, but in the application we will use the
quasi-static setting only.

\textbf{The balance equation for the internal energy.} We form the
scalar product of the momentum balance \eqref{EoB2} with the
velocity $\bs v$ to obtain the balance of the kinetic energy. Then
this balance is subtracted from the energy balance \eqref{EoB3}.
The result is the balance of the internal energy, which can be
written as
\begin{equation}\label{EoB8}
    \pl_t(\rho e)+\textrm{div}(\rho e\bs v+\bs q)=
    \bs\sigma:D(\bs v)+
    \bigg(\sum\limits_{\alpha=1}^N\frac{z_\alpha e_0}{m_\alpha}\bs{J}_\alpha+
    \dot{\bs P}+\bs P \textrm{div}(\bs v)-\bs P\cdot\nabla\bs v+
    \textrm{curl}({\bs M}) \bigg)\cdot  \bm{\mathcal{ E}}.
\end{equation}
Here, $\dot{\bs{P}}=\partial_t\bs{P}+\bs v\cdot\nabla\bs{P}$
indicates the material time derivative of the polarization, and
$\bm{\mathcal{ E}}=\bs E+\bs v\times\bs B$ defines the electromotive
intensity. The right hand side of \eqref{EoB8} represents the
production of internal energy due to mechanical stresses,
diffusion of free charges and polarization and magnetization.

\textbf{Constitutive model, Part 1: General strategy.} We choose
as variables of our model the quantities
$(\rho_{\alpha})_{\alpha=1,2,...,N}$, $\bs v$, $T$, $\chi$, $\bs
E$ and $\bs B$. Their determination relies on (i) the balance
equations for the partial masses \eqref{EoB1}, for the (barycentric)
momentum \eqref{EoB2}, and for the phase field \eqref{EoB4}, (ii)
the internal energy balance \eqref{EoB8}, (iii) Maxwell's
equations \eqref{A1-1} and \eqref{A1-2}.

These equations contain further quantities that are not in the
list of our variables: In the mass balances we have the reaction
rates $R_\textrm{f,b}^i$ and the diffusion fluxes $\bs{J}_\alpha$.
The constitutive quantities of the momentum balance are the stress
$\bs\sigma$, the charge density $n^\textrm{e}$ and the electric
current $\bs{j}^\textrm{e}$. The latter quantities also occur in
Maxwell's equations. The phase field balance contains the phase
field flux $\bs{J}_\chi$ and the production rate $\xi_\chi$.
Finally, the constitutive quantities of the internal energy balance
are the internal energy $\rho e$, the heat flux $\bs{q}$, the
stress $\bs\sigma$, the magnetization $\bs M$  and the polarization $\bs{P}$. The constitutive
quantities must be related to the variables in material dependent
manner, i.e. they must be given by constitutive equations.
Thermodynamically consistent constitutive equations have to
satisfy (i) the \textit{principle of material frame indifference}
and (ii) the \textit{entropy principle}.

\noindent {\bf The principle of material frame indifference} makes
a statement on constitutive functions of objective tensors, viz.
constitutive functions of objective tensors must remain invariant
with respect to {\it Euclidean transformations}.

To introduce these concepts we first consider Euclidean
transformations, which are the most general transformation between
two Cartesian coordinate systems with coordinates written as $(t,
x_1,\dots,x_3)=(t, x_i)_{i=1,\dots,3}$ and $(t^\ast,
x_1^\ast,\dots,x_3^\ast)=(t^\ast, x_i^\ast)_{i=1,\dots,3}$,
respectively:
\begin{equation}
t^\ast = t+a, \quad x_i^\ast = O_{ij}(t) x_j + b_i (t), \quad
\mathbf{O}(t) \mathbf{O}(t)^{\sf T} =\unity.
\end{equation}
Next we define the notion of {\it objective scalars, vectors and
tensors (of rank two)} if their components transform according to
\begin{align}
s^\ast & =\det({\bf O})^p s & & \mbox{ for scalars},\\
v_i^\ast & =\det({\bf O})^p O_{ij} v_j & & \mbox{ for vectors},\\
T_{ij}^\ast & = \det({\bf O})^p O_{ik} O_{jl} T_{kl} & &  \mbox{
for rank two tensors}.
\end{align}
Scalars and vectors are also called tensor of rank 0 and 1,
respectively. For $p=0$ the objective tensor is called absolute
objective tensor and for $p=1$ we have an axial objective tensor.

As an example, we consider an objective absolute tensor $\bs T$,
i.e. we have $T_{ij}^\ast = O_{ik} O_{jl} T_{kl}$. Let us further
assume that ${\bf T}$ is a function of $\nabla {\bf v}$, so that
in general we have
\begin{equation}
T_{ij}=f_{ij} \Big(\frac{\partial v_k}{\partial x_l}\Big), \quad \mbox{
respectively } T_{ij}^\ast=f_{ij}^\ast \Big(\frac{\partial
v_k^\ast}{\partial x_l^\ast}\Big).
\end{equation}
Then, with $v_i^\ast =O_{ij} (v_j +  \dot{O}_{jl} O_{kl}(x_k^\ast
-b_k)-\dot{b}_j)$, objectivity amounts to
\begin{equation}
\label{f-trafo} f^\ast (\mathbf{O}(t) \nabla \mathbf{v}\,
\mathbf{O}(t)^{\sf T} + \dot{\mathbf{O}}(t) \mathbf{O}(t)^{\sf T})
= \mathbf{O}(t) f(\nabla \mathbf{v}) \mathbf{O}(t)^{\sf T}.
\end{equation}
In this case, the principle of material frame indifference states
that $f^\ast=f$. In other words it implies that $f$ is an {\it
isotropic function}. Moreover, $f$ can only depend on the
symmetric part ${\bf D}$ of $\nabla {\bf v}$ which follows from
\eqref{f-trafo} by choosing ${\bf O}=\unity$ and
$\dot{\mathbf{O}}=-{\bf R}$, where ${\bf R}$ is the anti-symmetric
part of $\nabla {\bf v}$.

\noindent {\bf Classification 1: Transformation properties of
important quantities.} In this paragraph, we indicate the
transformation properties of some important quantities. There are
kinematic and non-kinematic quantities. While the transformation
properties of kinematic quantities can be derived, the
transformation properties of non-kinematic quantities must be
postulated. More details and motivations can be found in \cite{Mue85}
and \cite{TT60}. Kinematic quantities are for example the
barycentric velocity and the diffusion velocities. The diffusion
velocities and the symmetric part of the velocity gradient are
absolute objective tensors. The barycentric velocity and the
antisymmetric velocity gradients are non-objective quantities.
Here is a list with properties of important quantities:

Absolute objective scalars: mass densities, internal energy
density, phase field, reaction rates, phase field production.

Absolute objective vectors: diffusion fluxes, phase field flux,
charge potential, Lorentz force, polarization and electromotive
force.

Axial objective vectors: magnetization, magnetic flux density.

Absolute objective tensor: (Cauchy) stress tensor.

\noindent Note that the internal energy density is only an
objective scalar if the stress is symmetric, because then the
antisymmetric part of the velocity gradient in \eqref{EoB8} drops
out and the mechanical power ${\bs \sigma}:D({\bs v})$ is formed
with the symmetric part $D({\bs v})$ of the velocity gradient. The
electric field ${\bs E}$ and the magnetic current potential ${\bs
H}$ are not objective quantities. However, the sum of the
electromagnetic terms in \eqref{EoB8} is an objective scalar.

\noindent {\bf Classification 2: Parity of important quantities.}
The formulation of the 2$^\textrm{nd}$ law of thermodynamics needs
a further classification of the involved quantities, which is
related to their physical dimensions. If the units of time and
electric current, i.e. ''second'' and "ampere", respectively, of a
given quantity $q$ appear such that the sum of their powers is uneven, we assign the factor $-1$
according to $\mathcal{P} q=-1$. If the sum of the powers
of "second" and "ampere" is even
we assign $\mathcal{P} q=+1$. Then the quantity $q$ has negative
and positive parity, respectively. It is to be understood that we
choose the units of the SI system.

For example, the parity of the density of mass, momentum, internal
energy and magnetic indiction are then given by
\begin{equation}
\label{parity-examples} [\rho]=\frac{\rm kg}{\rm m^3} \to+1,\qquad
[\rho {\bf v}]=\frac{\rm kg}{\rm m^2 s}\to -1,\qquad [\rho
e]=\frac{\rm kg}{\rm m\, s^2}\to+1\qquad [{\bf B}]=\frac{\rm
Vs}{\rm m^2}\to-1.
\end{equation}
Evidently, the time derivative of a quantity has the opposite
parity, while spatial derivatives
keep the parity unchanged.\\[1ex]

\noindent {\bf Formulation of the entropy principle.} Any solution
of the above systems of partial differential equations, composed
of \eqref{EoB1}, \eqref{EoB2}, \eqref{EoB4}, \eqref{EoB8},
\eqref{A1-1} and \eqref{A1-2}, is called a {\it thermodynamic
process}. Here, by solutions we just mean functions which satisfy
the balance equations in a local sense. In particular, the value
of a quantity and of its spatial derivatives can be chosen independently.
With this concept, the 2$^{\rm nd}$ law of thermodynamics consists
of four universal and two material dependent axioms. For detailed
motivation and further discussion
 see \cite{BD13}.\\[-2ex]
\begin{enumerate}
\item[(I)] There is an entropy/entropy-flux pair $(\rho s, \bs \Phi)$
as a material dependent quantity, where $\rho s$ is an absolute  objective
scalar and $\bs \Phi$ is an absolute objective vector. The entropy has the
physical dimension ${\rm J}\, {\rm kg}^{-1}\, {\rm K}^{-1}={\rm
m}^2 {\rm s}^{-2} {\rm K}^{-1}$, hence is of positive parity. The
entropy flux and the entropy production $\zeta$ thus have negative
parity.\vspace{0.05in} \item[(II)] The pair $(\rho s,\bs \Phi)$
satisfies the balance equation
\begin{equation}\label{EntB1}
\pt (\rho s) + \div (\rho s {\bf v} +\bs \Phi ) = \zeta.
\end{equation}
\item[(III)]
Any admissible entropy/entropy-flux is such that\\[0.5ex]
    (i) $\zeta$ consists of a sum of binary products according to
    \begin{equation}\label{M15}
    \zeta = \sum_m \mathcal{N}_m \mathcal{P}_m,
    \end{equation}
    where the $\mathcal{N}_m$ denote quantities of negative parity, while $\mathcal{P}_m$
    refers to positive parity.\\[0.25ex]
    (ii) $\mathcal{N}_m \mathcal{P}_m \geq 0$ for all $m$ and for every thermodynamic process.\vspace{0.05in}
\item[(IV)]
     A thermodynamic process where $\zeta =0$ is said to be in thermodynamic equilibrium. This statement is to be understood in a pointwise
    sense; in particular, this must not hold everywhere, i.e.\ thermodynamic equilibrium can be attained locally.\\
    A thermodynamic process is called reversible if $\zeta =0$ everywhere.
    \end{enumerate}
In addition, to these universal axioms, we impose two further ones
which refer to the most general constitutive models we are
interested in. These are:
\begin{enumerate}
 \item[(V)]
There are the following {\it dissipative mechanisms} for fluid
mixtures under consideration: {\it diffusion of mass and charge},
{\it chemical reaction}, {\it viscous flow}, {\it heat
conduction}, {\it phase transition due to diffusion} and {\it
phase transition due to phase production}. Correspondingly, in
equilibrium we have
\begin{equation}\label{entZusatz}
    {\bs u}_\alpha=0,\quad R^i_\textrm{f}=R^i_\textrm{b},\quad
    D(\bs{v})=0,\quad {\bs q}=0,\quad {\bs J}_\chi=0,\quad
    \xi_\chi=0.
\end{equation}
\item[(VI)] For the class of fluid mixtures under consideration,
we restrict the dependence of the entropy according to
\begin{equation}
\label{EntB2} \rho s = \rho \tilde s (\rho e-\bm{\mathcal{\bs E}}\cdot
\bs P,\rho_1, \ldots ,\rho_N, \bm{\mathcal {\bs E}},{\bs B}, \chi,
\nabla \chi),
\end{equation}
where $\rho \tilde s$ is a {\it concave function} which satisfies
the principle of material frame indifference. By means of this
function, we define the {\it (absolute) temperature} $T$, the {\it
chemical potentials} $(\mu_i)_{i=1,2,...,N}$ of the constituents
and the chemical potential of the phases $\mu_\chi$ as
\begin{equation}\label{EntB3}
\frac 1 T := \frac{\partial \rho \tilde s}{\partial (\rho
e-\bm{\mathcal{\bs E}}\cdot \bs P)}, \qquad \frac{\mu_i}{T} := -
\frac{\partial \rho \tilde s}{\partial
\rho_i},\qquad\frac{\mu_\chi}{T} := - \bigg(\frac{\partial \rho \tilde
s}{\partial \chi}-\nabla\cdot \frac{\partial \rho \tilde
s}{\partial \nabla\chi}\bigg) .\vspace{0.1in}
\end{equation}
\end{enumerate}

\noindent {\bf Identification of the entropy production.} To
calculate the entropy production, we introduce the material time
derivative in \eqref{EntB1} and insert the entropy function
\eqref{EntB2}. The intermediate result is
\begin{align}
\zeta=& \frac{1}{T}((\rho e)^.-\dot{\bs P}\cdot \bm{\mathcal {\bs E}} -\bs
P\cdot \bm{\mathcal{\dot{\bs E}}})-
\sum\limits_{\alpha=1}^N\frac{\mu_\alpha}{T}\dot{\rho}_\alpha +
\frac{\partial \rho \tilde s}{\partial  \bm{\mathcal {{\bs
E}}}} \cdot \bm{\mathcal {\dot{\bs E}}} +\frac{\partial \rho \tilde
s}{\partial{\bs B}}\cdot{\dot{\bs B}} + \frac{\partial \rho \tilde
s}{\partial \chi}\dot\chi+\frac{\partial \rho \tilde s}{\partial
\nabla \chi}\cdot({\nabla\chi})^.
\nonumber\\
& + \rho s \, \textrm{div}(\bs v)+\textrm{div}(\bs \Phi)
         ~.\label{UG1}
\end{align}
Next, we eliminate the time derivatives of internal energy, partial
mass densities and phase field by means of the corresponding
balance equations. Moreover, the term $(\nabla\chi)^.$ is
substituted by the identity
\begin{equation}\label{UG2}
    (\nabla\chi)^.=\nabla\dot\chi-\nabla \bs v\cdot\nabla\chi.
\end{equation}
Finally, we use the identity
\begin{equation}\label{UG2a}
    \bm{\mathcal E}\cdot\textrm{curl}({\bs M})=
    -\textrm{div}(\bm{\mathcal E}\times\bs{M})+\bs{M}\cdot\textrm{curl}(\bm{\mathcal  E})
\end{equation}
and substitute $\textrm{curl}(\bm{\mathcal E})$ by a variant
of \eqref{A1-1}$_1$, viz.
\begin{equation}\label{UG2b}
    \textrm{curl}(\bm{\mathcal E})=-\dot{\bs B}-{\bs
    B}\textrm{div}({\bs v})+{\bs B}\cdot\nabla{\bs v}.
\end{equation}
After rearranging terms we obtain
\begin{align}
\zeta= &~ \textrm{div}\left(\bs \Phi-\frac{1}{T}(\bs q+
\bm{\mathcal {\bs
E}}\times{\bs M})+\frac{1}{T}\sum\limits_{\alpha=1}^N\mu_\alpha
\bs{J}_\alpha+ \frac{\partial \rho \tilde s}{\partial
\nabla \chi}\dot\chi-\frac{1}{\rho}\bigg(\frac{\partial \rho \tilde
s}{\partial \chi}-\nabla\cdot \frac{\partial \rho \tilde
s}{\partial
\nabla \chi}\bigg)\bs{ J}_\chi\right)\nonumber\\
& +\left(\frac{\partial \rho \tilde s}{\partial \bm{\mathcal {\bs
E}}}
-\frac{\bs P}{T}\right)
\cdot  ( \dot{ \bm{\mathcal{ E}}}+\nabla {\bs
v}
\cdot \bm{\mathcal{{\bs E}}}) + \bigg(\frac{\partial \rho \tilde
s}{\partial\bs B}-\frac{{\bs M}}{T}\bigg)\cdot({\dot{\bs B}} -{\bs
B}\cdot\nabla{\bs v})
\nonumber\\
&+ \frac{1}{T}\Bigg(\bs{\sigma}-T \frac{\partial \rho \tilde
s}{\partial\nabla\chi}\otimes\nabla\chi-T \bm{\mathcal{\bs E}}\otimes
\frac{\partial \rho \tilde s}{\partial \bm{\mathcal{\bs E}}}+
T \frac{\partial \rho \tilde s}{\partial{\bs B}}\otimes\bs B \nonumber \\
 & \hspace{3.4cm}  -
\bigg(\rho
e-\bm{\mathcal{\bs{E}}} \cdot\bs P-T\rho\tilde
s-\sum\limits_{\alpha=1}^N\rho_\alpha\mu_\alpha+\bs M\cdot\bs
B\bigg)\unity\Bigg)
:\nabla\bs{v}\nonumber\\
&+ (\bs q+
\bm{\mathcal {\bs
E}}\times{\bs M}) \cdot\nabla\frac{1}{T}
-\sum\limits_{\alpha=1}^N\bs{J}_\alpha\cdot
\Big(\nabla \, \frac{\mu_\alpha}{T}-\frac{z_\alpha}{m_\alpha
T} \bm{\mathcal{E}}\Big)-\frac{1}{T}
\sum\limits_{i=1}^{N_\textrm{R}}(R^{i}_\textrm{f}-R^{i}_\textrm{b})
\bigg(\sum\limits_{\alpha=1}^N
m_\alpha\gamma_\alpha\mu_\alpha\bigg) \nonumber\\ 
&-\bs{J}_\chi\cdot
\nabla \, \frac{\mu_\chi}{T}-\frac{1}{T}\xi_\chi\mu_\chi~.\label{UG1a}
\end{align}
Remarks on the composition and classification of terms in
\eqref{UG1a}: \\
\noindent 1. The combinations $\bm{\mathcal{\dot E}}+\nabla({\bs
v})\cdot \bm{\mathcal{{\bs E}}}$ and ${\dot{\bs B}} -{\bs
B}\cdot\nabla{\bs v}$, respectively, form objective vectors
because we have\begin{equation}\label{WD1}
    \mathcal{\dot E}_i^*+\nabla_i^*(v_k^*)\mathcal{E}_k^*=O_{ij}
    (\mathcal{\dot E}_j+\nabla_j(v_k)\mathcal{E}_k),\qquad
    {\dot B}_i^*+\nabla_k^*(v_i^*)B_k^*=\det(O)O_{ij}
    ({\dot B}_j+\nabla_k(v_j)B_k).
\end{equation}
2. The principle of material frame indifference restricts the
entropy function to the form
\begin{equation}\label{WD3}
    \rho \tilde s (\rho e- \bm{\mathcal {\bs E} }\cdot \bs P,\rho_1, \ldots
,\rho_N, \bm{ \mathcal {\bs E}},{\bs B}, \chi, \nabla \chi)=\rho \bar s
(\rho e- \bm{\mathcal {\bs E}}\cdot \bs P,\rho_1, \ldots ,\rho_N,
| \bm{\mathcal {\bs E}}|^2,|{\bs B}|^2, ( \bm{\mathcal {\bs E}}\cdot{\bs
B})^2, \chi, |\nabla \chi|^2),
\end{equation}
implying that the terms
\begin{equation}\label{WD2}
    \frac{\partial \rho \tilde
s}{\partial \nabla\chi }
\otimes\nabla\chi,\qquad - \bm{\mathcal{\bs
E}} 
\otimes \frac{\partial \rho \tilde s}{\partial\bm{\mathcal{\bs E}}}+
 \frac{\partial \rho \tilde s}{\partial{\bs B}}\otimes\bs B
\end{equation}
are symmetric objective tensors.

\noindent 3. For this reason, the factor of $\nabla \bs v$ is
symmetric and only the symmetric part $D(\bs v)$ of the velocity
gradient appears in the third line of \eqref{UG1}.

\noindent 4. Thus the representation \eqref{UG1} consists of a
divergence and a sum of binary products with objective factors of
negative, respectively positive parity.

To satisfy Axiom~(III-i), we choose the entropy flux as
\begin{equation}\label{EntF1}
    \bs \Phi=\frac{1}{T}(\bs
q + \bm{\mathcal{\bs E}} \times \bs M)-\frac{1}{T}(\sum\limits_{\alpha=1}^N\mu_\alpha
\bs{J}_\alpha+\frac{1}{\rho}\mu_\chi\bs{ J}_\chi)- \frac{\partial
\rho \tilde s}{\partial\nabla\chi}\dot\chi.
\end{equation}
Then, the remaining part of \eqref{UG1a} is identified as the
entropy production according to Axiom (IV):
\begin{align}
\zeta= &~ \left(\frac{\partial \rho \tilde s}{\partial \bm{\mathcal {\bs
E}}}-\frac{\bs P}{T}\right)\cdot( \bm{\mathcal{ \dot{\bs E}}}+\nabla({\bs
v})\cdot  \bm{\mathcal{{\bs E}}}) +
\bigg(\frac{\partial \rho \tilde
s}{\partial\bs B}-\frac{{\bs M}}{T}\bigg)\cdot({\dot{\bs B}} -{\bs
B}\cdot \nabla{\bs v})
\nonumber\\
&+ \frac{1}{T}\Bigg(\bs{\sigma}-T\frac{\partial \rho \tilde
s}{\partial\nabla\chi}\otimes\nabla\chi-T \bm{\mathcal{\bs E}}\otimes
\frac{\partial \rho \tilde s}{\partial\bm{\mathcal{\bs E}}}+T
 \frac{\partial \rho \tilde s}{\partial{\bs B}}\otimes\bs B \nonumber\\
& \hspace{3.cm}-
\bigg(\rho
e- \bm{\mathcal{\bs{E}}} \cdot\bs P-T\rho\tilde
s-\sum\limits_{\alpha=1}^N\rho_\alpha\mu_\alpha+\bs M\cdot\bs
B\bigg)\unity\Bigg)
:D(\bs{v})\nonumber\\
& +\bs q\cdot\nabla\frac{1}{T}
-\sum\limits_{\alpha=1}^N\bs{J}_\alpha\cdot
\Big(\nabla \, \frac{\mu_\alpha}{T}-\frac{z_\alpha e_0}{m_\alpha
T} \bm{\mathcal{E}}\Big)-\frac{1}{T}
\sum\limits_{i=1}^{N_\textrm{R}}(R^{i}_\textrm{f}-R^{i}_\textrm{b})
\bigg(\sum\limits_{\alpha=1}^N
m_\alpha\gamma_\alpha\mu_\alpha\bigg) \nonumber \\ &-\bs{J}_\chi\cdot
\nabla \, \frac{\mu_\chi}{T}-\frac{1}{T}\xi_\chi\mu_\chi~.\label{UG2c}
\end{align}
Each product describes a dissipative mechanism and couples a
quantity of negative parity with a quantity of positive parity.
This representation of the entropy production allows to formulate
constitutive functions for polarization, magnetization, stress,
heat flux, diffusion fluxes, reaction rates, phase flux and the
phase production rate. Cross effects between the various
dissipative mechanisms may be included. If these are
introduced by mixing within the same parity class so that the
entropy production is conserved, then we obtain the so called
Onsager symmetry as a consequence. This remarkable fact is
established and carefully described in \cite{BD13}. For illustration,  
we simply couple heat conduction and diffusion later on.

\textbf{Polarization and magnetization.} At first we discuss
constitutive equations for $\bs M$ and $\bs P$. To satisfy Axiom
III-ii we choose
\begin{equation}\label{MP1}
\bs P=T\frac{\partial \rho \tilde s}{\partial \bm{\mathcal {\bs E}}}-
\tau_{{\mathcal{E}}} ( \dot{\bm{\mathcal{{\bs E}}}}+
\nabla({\bs
v})\cdot   \bm{\mathcal{{\bs E}}})\quad\textrm{and}\quad \bs
M=T\frac{\partial \rho \tilde s}{\partial \bs B}- \tau_{
B}(\dot{\bs B}-{\bs B}\cdot\nabla({\bs v})),
\end{equation}
where $\tau_{{\mathcal{E}}}\geq0$ and $\tau_{ \ B}\geq0$ are
phenomenological coefficients. We observe that the constitutive
quantities $\bs P$ and $\bs M$ depend on the variables of the
entropy function  and, additionally, on $\nabla \bs v$ and the time
derivatives of $\bm{\mathcal{\bs{E}}}$ and $\bs B$ . These constitutive
equations embody a variety of complex phenomena, for example
hysteresis and inertia of free charges leading to frequency
dependent refraction indices. A special case arises if we set
$\tau_{{\mathcal{E}}}=0$ and $\tau_{B}=0$. Then we have the simple
constitutive equations
\begin{equation}\label{MP2}
\bs P=T\frac{\partial \rho \tilde s}{\partial \bm{ \mathcal {\bs
E}}}\quad\textrm{and}\quad \bs M=T\frac{\partial \rho \tilde
s}{\partial \bs B}
\end{equation}
that still include,  piezo-electricity, paramagnetism
and related phenomena.

\textbf{Stress.} The constitutive equation for the stress that
identically satisfies Axiom III-ii can be read off from the second line
of \eqref{UG1a}. We substitute $D(\bs v)$ by the sum of its trace
and the traceless part $D^\circ(\bs v)$. Abbreviating the factor
of $D(\bs v)$ by $\bs A$, the second line of \eqref{UG1a} reads
$1/3~\textrm{Tr}(\bs A)\textrm{div}(\bs v)+\bs A^\circ:D^\circ(\bs
v)$. Then Axiom III-ii is satisfied for the constitutive equations
\begin{equation}\label{St1}
    \frac{1}{3}\textrm{Tr}(\bs
    A)=(\lambda+\frac{2}{3}\eta)\textrm{div}(\bs
    v)\quad\textrm{and}\quad
    \bs {A}^\circ=2\eta D^\circ(\bs v).
\end{equation}
The phenomenological coefficients $\lambda+\frac{2}{3}\eta\geq0$
and $\eta\geq0$ are called bulk and shear modulus, respectively.
Thus the constitutive equation for the traceless part of the
stress reads
\begin{multline}\label{St2}
    \bs\sigma^\circ=T \bigg( \Big(\frac{\partial \rho \tilde
s}{\partial\nabla\chi}\otimes\nabla\chi- \frac{1}{3}\frac{\partial
\rho \tilde s}{\partial\nabla\chi}\cdot\nabla\chi\unity \Big)
\\ + \Big(\bm{\mathcal{\bs E}}\otimes \frac{\partial \rho \tilde
s}{\partial\bm{\mathcal{\bs E}}}-\frac{1}{3}\frac{\partial \rho \tilde
s}{\partial\bm{\mathcal{\bs E}}}\cdot\bm{\mathcal{\bs E}}\unity\Big)-
 \Big(\frac{\partial \rho \tilde s}{\partial{\bs B}}\otimes\bs B-\frac{1}{3}\frac{\partial
\rho \tilde s}{\partial\bs{B}}\cdot\bs{B}\unity \Big)\bigg) +2\eta
 D^\circ(\bs v),
\end{multline}
and for the trace of the stress we obtain
\begin{align}
\textrm{Tr}(\bs{\sigma})=& \frac{T}{3}\bigg(\frac{\partial \rho \tilde
s}{\partial\nabla\chi}\cdot\nabla\chi+\frac{\partial \rho \tilde
s}{\partial\bm{\mathcal{\bs E}}}\cdot\bm{\mathcal{\bs E}}-
 \frac{\partial \rho \tilde s}{\partial{\bs B}}\cdot\bs B\bigg)-\bigg(\rho
e- \bm{\mathcal{\bs{E}}} \cdot\bs P-T\rho\tilde
s-\sum\limits_{\alpha=1}^N\rho_\alpha\mu_\alpha+\bs M\cdot\bs B\bigg)\nonumber\\
&+(3\lambda+ 2\eta)\textrm{div}(\bs{v}).\label{St3}
\end{align}
Hence, the trace of the stress and the deviatoric stress as well
contain parts that vanish in equilibrium, viz. the terms
proportional to velocity gradients. For this reason, we prefer a
further decomposition of the stress into a so-called viscous and a
non-viscous part. We denote the viscous part by
$\bs{\sigma}^\textrm{NS}$ to refer to the Navier-Stokes system,
while the non-viscous part is simply denoted by
$\bs{\sigma}^\textrm{nv}$. The Navier-Stokes part can then be
written as
\begin{equation}\label{St4}
    \bs{\sigma}^\textrm{NS}=\lambda\textrm{div}(\bs{v})+2\eta D(\bs
    v),
\end{equation}
and for the non-viscous part we have
\begin{equation}\label{St5}
    \bs{\sigma}^\textrm{nv}=T\bigg(\frac{\partial \rho \tilde
s}{\partial\nabla\chi}\otimes\nabla\chi +\bm{\mathcal{\bs E}}\otimes
\frac{\partial \rho \tilde s}{\partial\bm{\mathcal{\bs E}}}-
 \frac{\partial \rho \tilde s}{\partial{\bs B}}\otimes\bs B\bigg)
 -\bigg(\rho
e-\bm{\mathcal{\bs{E}}} \cdot\bs P-T\rho\tilde
s-\sum\limits_{\alpha=1}^N\rho_\alpha\mu_\alpha+\bs M\cdot\bs
B\bigg)\unity.
\end{equation}

\textbf{Thermo-diffusion.} Bothe and Dreyer \cite{BD13} have
established a new method to introduce cross effects. For
illustration, we consider the coupling of heat flux and diffusion
fluxes. At first, we only consider dissipation due to diffusion.
The corresponding entropy production is
\begin{equation}\label{UG4}
    \zeta^\textrm{D}=
-\sum\limits_{\alpha=1}^N\bs{J}_\alpha\cdot
\Big(\nabla \, \frac{\mu_\alpha}{T}-\frac{z_\alpha e_0}{m_\alpha
T} \bm{\mathcal{E}}\Big)=-\sum\limits_{\alpha=1}^{N-1}\bs{J}_\alpha\cdot
\left(\nabla \, \frac{\mu_\alpha-\mu_N}{T}-\frac{1}{T} \Big(\frac{z_\alpha e_0}{m_\alpha
}-\frac{z_N e_0}{m_N }\Big) \bm{\mathcal{E}}\right) .
\end{equation}
Due to the side condition \eqref{BQ5}, constitutive equations are
only needed for $(N-1)$ fluxes. The simplest choice of
constitutive functions without coupling  is
\begin{equation}\label{D1}
    \bs{J}_\alpha = -
M_{\alpha}\left(\nabla \, \frac{\mu_\alpha-\mu_N}{T} -\frac{1}{T}
\Big(\frac{z_\alpha e_0}{m_\alpha}-\frac{z_N e_0}{m_N}\Big) \bm{\mathcal{{\bs
E}}}\right),
\end{equation}
where the mobilities $M_{\alpha}\geq0$ are non-negative
phenomenological coefficients. To introduce coupling between the
constituents of the mixture we proceed as follows. We start from
\eqref{UG4} and abbreviate for a moment the factors of the
diffusion fluxes by $\mathcal{\bs{P}}_\alpha$, i.e. we write
$$\zeta^\textrm{D}=-\sum_\alpha\bs{J}_\alpha\cdot\mathcal{\bs{P}}_\alpha.$$
Then we introduce two matrices $\bs{A}$ and $\bs{B}$ of dimension
$(N-1)^2$. We choose $\bs{A}^\textrm{T}=\bs{B}^{-1}$ and obtain
\begin{equation}\label{UG5}
   \zeta^\textrm{D}=-\sum\limits_{\alpha=1}^{N-1}\bs{J}_\alpha\cdot\mathcal{\bs{P}}_\alpha=
    -\sum\limits_{\alpha=1}^{N-1}
    \Bigg(\sum\limits_{\gamma=1}^{N-1}A_{\alpha\gamma}\bs{J}_\gamma\Bigg)
    \cdot\Bigg(\sum\limits_{\delta=1}^{N-1}B_{\alpha\delta}\mathcal{\bs{P}}_\delta\Bigg).
\end{equation}
Now we formulate constitutive equations as before, viz.
\begin{equation}\label{D2}
    \sum\limits_{\gamma=1}^{N-1}A_{\alpha\gamma}\bs{J}_\gamma=-\tilde M_\alpha
    \sum\limits_{\delta=1}^{N-1}B_{\alpha\delta}\mathcal{\bs{P}}_\delta 
\quad\textrm{with}\quad
\tilde M_{\alpha}\geq0.
\end{equation}
Solution for the diffusion fluxes yields the constitutive equation
\begin{equation}\label{D3}
    \bs{J}_\alpha=-\sum\limits_{\beta=1}^{N-1}M_{\alpha\beta}\mathcal{\bs{P}}_\beta
    \quad\textrm{with}\quad
M_{\alpha\beta}=
\sum\limits_{\gamma=1}^{N-1}B_{\gamma\alpha}\tilde
M_{\gamma}B_{\gamma\beta}.
 \end{equation}
The new mobility matrix $\bs{M}$ is positive definite and (!)symmetric. Thus, if cross
effects do not lead to additional entropy production, the Onsager symmetry is a consequence.

As a further example we now consider the entropy production
$\zeta^\textrm{HD}$ of the combined dissipative mechanisms of heat
conduction and diffusion. We have
\begin{equation}\label{UG5a}
    \zeta^\textrm{HD}=\bs{q}\cdot\nabla\frac{1}{T}
-\sum\limits_{\alpha=1}^{N-1}\bs{J}_\alpha\cdot\mathcal{\bs{P}}_\alpha.
\end{equation}
The conventional choice of constitutive functions for $(N-1)$
diffusion fluxes $(\bs{J}_{\alpha})_{\alpha=1,2,...,N-1}$ and the
heat flux $\bs{q}$ are
\begin{eqnarray}
\bs{J}_\alpha&=&-\sum\limits_{\beta=1}^{N-1}
M_{\alpha\beta} \left( \nabla \frac{\mu_\beta-\mu_N}{T}-\frac{1}{T}
\Big(\frac{z_\beta e_0}{m_\beta}-\frac{z_N e_0}{m_N}\Big)
\bm{\mathcal{{\bs E}}}\right)
+L_\alpha^{\textrm{J}}\nabla \frac{1}{T} ,\label{TD1}\\
\bs{q}&=&-\sum\limits_{\alpha=1}^{N-1}
L_\alpha^{\textrm{q}}\left(  \nabla \frac{\mu_\alpha-\mu_N}{T}-\frac{1}{T}
\Big(\frac{z_\alpha e_0}{m_\alpha}-\frac{z_N e_0}{m_N}\Big)\bm{\mathcal{{\bs
E}}}\right) +a \, \nabla \frac{1}{T} ,\label{TD2}
\end{eqnarray}
where the kinetic coefficients must satisfy the condition that the matrix 
\begin{equation}\label{TD3}
    \left(%
\begin{array}{cc}
  M_{\alpha\beta} & L^\textrm{J}_\alpha \\
  L^\textrm{q}_\alpha & a \\
\end{array}%
\right)\quad\textrm{is positive definite}.
\end{equation}
Classically, the cross effects in \eqref{TD1} and \eqref{TD2} are
related to each other by postulating the Onsager symmetry
relations
\begin{equation}\label{O1}
    M_{\alpha\beta}=M_{\beta\alpha}\quad\textrm{and}\quad
    L_\alpha^\textrm{J}=L_\alpha^\textrm{q}.
\end{equation}
According to Bothe and Dreyer \cite{BD13}, the Onsager symmetry is
achieved as follows: \\
The symmetry of the diffusion matrix is taken
to be granted from the last example. To derive the symmetry
$L_\alpha^\textrm{J}=L_\alpha^\textrm{q}$, we go back to the
entropy production \eqref{UG5} and introduce the thermo-diffusion
coefficients $D_\alpha$ by adding two terms to $\zeta^\textrm{HD}$
that conserve $\zeta^\textrm{HD}$:
\begin{equation}\label{UG5b}
    \zeta^\textrm{HD}=\left(\bs q+
    \sum\limits_{\alpha=1}^{N-1}( D_\alpha -D_N){\bs
    J}_\alpha)\right)\cdot\nabla\frac{1}{T}
-\sum\limits_{\alpha=1}^{N-1}\bs{J}_\alpha\cdot
\left(\mathcal{\bs{P}}_\alpha-(D_\alpha-D_N)\nabla\frac{1}{T}\right).
\end{equation}
Then we propose two constitutive equations for $(N-1)$ diffusion
fluxes $(\bs{J}_{\alpha})_{\alpha=1,2,...,N-1}$ and the heat flux
$\bs{q}$ by the simple relations
\begin{eqnarray}
{\bs J}_\alpha&=&-\sum\limits_{\beta=1}^{N-1}M_{\alpha\beta}
    \left(\nabla \Big( \frac{\mu_\beta}{T}-\frac{\mu_N}{T}  \Big)
    -\frac{1}{T}
    \Big(\frac{z_\beta e_0}{m_\beta}-\frac{z_N e_0}{m_N}
    \Big) \bm{\mathcal{E}} \right)  \nonumber \\
  & & \hspace{4.3cm}  
+\sum\limits_{\beta=1}^{N-1}M_{\alpha\beta} (D_\beta-D_N)
\nabla\frac{1}{T},\label{TD3a}\\
\bs{q}&=&-\sum\limits_{\alpha=1}^{N-1}(D_\alpha-D_N){\bs J}_\alpha
+a \nabla\left(\frac{1}{T}\right).\label{TD4}
\end{eqnarray}
A comparison with \eqref{TD1} and \eqref{TD2} yields
$$L_\alpha^\textrm{J}=\sum_{\beta=1}^{N-1}M_{\alpha\beta}(D_\beta-D_N)
\quad \text{and} \quad 
L_\alpha^\textrm{q}=\sum_{\beta=1}^{N-1}M_{\beta\alpha}(D_\beta-D_N).$$
Thus the symmetry $M_{\alpha\beta}=M_{\beta\alpha }$ implies
 the symmetry $L_\alpha^\textrm{J}=L_\alpha^\textrm{q}$. This
is a further example that the Onsager symmetry is a consequence of
cross effects that conserve the entropy production.

A detailed discussion of the creation of cross effects by mixing
within the parity classes is found in \cite{BD13}. For very special
cases there are various proofs of the Onsager symmetries within
the phenomenological setting. For example, Truesdell \cite{Tru69} and
M\"{u}ller \cite{Mue85} prove the symmetries under some assumptions
within a Class II model.

 \textbf{Constitutive equations for
special cases.} In this study, we are not interested in the most
general constitutive model possible. In fact, we only
consider the special case, where we have
\begin{enumerate}
\item $\tau_{{\mathcal{\bs{E}}}}=0$, $\tau_{\bs{B}}=0$ \item no
magnetization, i.e.~$\bs{M}=0$,
    \item isothermal processes,
    \item no coupling between dissipative mechanisms,
    \item the Allen-Cahn equation for the phase field, i.e. ${\bs J}_\chi=0.$
\end{enumerate}
Then the inequality \eqref{UG1} may be identically satisfied by
the simple constitutive laws
\begin{eqnarray}
\bs{P}&=&T\frac{\partial\rho \tilde s}{\partial \bm{\mathcal{\bs{E}}}},\qquad
\frac{\partial\rho \tilde s}{\partial\bs{B}}=0,\label{R0}\\
{\bs \sigma}^\textrm{NS}&=&\lambda\div({\bs v})+2\eta D(\bs v),\label{R1}\\
{\bs \sigma}^\textrm{nv}&=&T \frac{\partial \rho \tilde
s}{\partial\nabla\chi}\otimes\nabla\chi +T\bm{\mathcal{\bs E}}\otimes
\frac{\partial \rho \tilde s}{\partial\bm{\mathcal{\bs E}}}
 - \bigg(\rho
e- \bm{\mathcal{\bs{E}}} \cdot\bs P-T\rho\tilde
s-\sum\limits_{\alpha=1}^N\rho_\alpha\mu_\alpha \bigg)\unity,\label{R2}\\
 \bs{J}_\alpha&=&-\sum\limits_{\beta=1}^{N-1}
M_{\alpha\beta}\nabla\left(\frac{\mu_\beta-\mu_N}{T}-\frac{1}{T}
\Big(\frac{z_\beta e_0}{m_\beta}-\frac{z_N e_0}{m_N}\Big) \bm{\mathcal{{\bs
E}}}\right),\label{R3}\\
\ln \bigg(\frac{R_\textrm{b}^i}{R_\textrm{f}^i} \bigg)&=&\frac{1}{kT}\sum\limits_{\alpha=1}^N
m_\alpha\gamma_\alpha^i\mu_\alpha\quad\textrm{for}\quad i=1,2,...,N_\textrm{R},\label{R4}\\
\xi_\chi&=&-\tau \mu_\chi.\label{R5}
\end{eqnarray}
 The equation \eqref{R4} is an ansatz for chemical
reactions far from equilibrium. The corresponding part of the
entropy production is non-negative due to
$-\ln\big({R_\textrm{b}^i}/{R_\textrm{f}^i}\big)
(R_\textrm{f}^i-R_\textrm{b}^i)\geq
0$.\\[2mm]

\textbf{Introduction of the (Helmholtz) free energy density.}
Usually,  one prefers to have the temperature as an independent
variable instead of the internal energy density. To this end, we
introduce the (Helmholtz) free energy density 
$$\rho\psi=\rho
e-{\bs P}\cdot \bm{\mathcal{{\bs E}}} -T \rho s.$$ 
In the entropy function we change the variable 
$\rho e-{\bs P}\cdot \bm{\mathcal{{\bs
E}}}$ to $T$ with $\rho e=\rho \hat e
(T,\rho_1,...,\rho_N, \bm{\mathcal{{\bs E}}},\chi,\nabla\chi)$.
Then, for $\psi=\hat\psi(T,\rho_1,...,\rho_N, \bm{\mathcal{{\bs
E}}},\chi,\nabla \chi)$ we obtain from \eqref{EntB3} and \eqref{R0}
\begin{equation}\label{GCM2}
     \rho s=-\frac{\partial\rho\hat\psi}{\partial T},\quad
         \mu_\alpha=\frac{\partial\rho\hat\psi}{\partial\rho_\alpha},\quad
         \rho e =-T^2\frac{\partial}{\partial
     T}\left(\frac{\rho\hat\psi}{T}\right),\quad
    \bs P=-\frac{\partial\rho\hat\psi}{\partial \bm{\mathcal{\bs E}}},\quad
    \mu_\chi= \frac{\partial \rho \hat
\psi}{\partial \chi}-T\nabla\cdot \frac{1}{T}\frac{\partial \rho
\hat \psi}{\partial \nabla\chi}.
    \end{equation}
In terms of the free energy densities, the representation
\eqref{R2} of the non-viscous part of the stress reads
\begin{equation}\label{GD3}
   \bs{\sigma}^\textrm{nv}=-\frac{\partial \rho
\hat\psi}{\partial\nabla\chi}\otimes\nabla\chi+\frac{1}{2}(
\bm{\mathcal{E}}\otimes\bs P+\bs P\otimes\bm{\mathcal{{\bs E}}})+\Big(\rho\psi
-\sum\limits_{\alpha=1}^N\rho_\alpha\mu_\alpha \Big)\unity.
\end{equation}
It is convenient to introduce the pressure $p$ by
\begin{equation}\label{GD4}
    p=-\rho\psi
+\sum\limits_{\alpha=1}^N\rho_\alpha\mu_\alpha.
\end{equation}
Then, this relation is called {\it Gibbs-Duhem equation}.

The free energy density is the central constitutive quantity of a
mixture of charged and neutral constituents. Its explicit
choice is given in the next section.

\section{Choice of energy density and non-linear stability}\label{sec:coe}
We consider the quasi-static setting of electrodynamics such that $\bs B$
drops out and 
\newline $\bm{\mathcal{\bs E}}=\bs E=-\nabla \varphi$.
Moreover, we choose the following free energy density 
\begin{equation}
\label{def:innerenergy}
 \rho\psi:= W(\chi) + \frac{\gamma}{2} |\nabla \chi|^2 + h(\chi) \rho\psi_{\textrm{L}}(\rho_1,\dots,\rho_N) 
+ (1-h(\chi)) \rho\psi_{\textrm{V}}(\rho_1,\dots,\rho_N) - \frac{\veps_0}{2} s(\chi) |{\bs E}|^2,
\end{equation}
where $W(\chi):=(\chi-1)^2(\chi+1)^2$, $h:\R \rightarrow [0,1]$ is a smooth interpolation function satisfying
\begin{equation}\label{def:interpolation}
 h(z) = \left\{ \begin{array}{cl}
                 1 &\text{for} \quad z \geq 1,\\ 0&\text{for} \quad z \leq -1,
                \end{array}
\right.
\end{equation}
such that $h'(z) = 0$ for  all $ |z| \geq 1.$
In \eqref{def:innerenergy}, $\rho\psi_{\textrm{L}}, \rho \psi_{\textrm{V}}: (0,\infty)^N \longrightarrow [0,\infty)$ are the free energy 
functions of the pure phases which we assume to be given by
a combination of isotropic elastic response and entropy of mixing, i.e.,
\[\rho\psi_{\textrm{L/V}} = \sum_\alpha \rho_\alpha \psi_\alpha^{\textrm{R}}
+ (K_{L/V}-p^{\textrm{R}})\Big(1-\frac{n}{n^{\textrm{R}}}\Big)
+K_{L/V}\frac{n}{n^{\textrm{R}}} \ln 
\Big(\frac{n}{n^{\textrm{R}}}\Big) 
+ kT\sum_\alpha n_\alpha\ln \Big(\frac{n_\alpha}{ n}\Big),\]
where $K_{L/V}$ are the bulk moduli, 
 and
$n^{\textrm{R}},$ $p^{\textrm{R}},$ $\psi_\alpha^{\textrm{R}}$ are reference number density,  reference pressure and  reference energies, respectively.
Note that \eqref{def:innerenergy} and \eqref{def:interpolation} imply that $\chi=+ 1(-1)$ corresponds to liquid (vapor).
Similarly, we assume
\begin{equation}
s(\chi)= h(\chi) s_{\textrm{L}}
+ (1-h(\chi)) s_{\textrm{V}},
\end{equation}
where $s_{\textrm{L/V}}$ are the susceptibilities of the pure phases.
For brevity, we define
\begin{equation}
\begin{split}
\label{def:rhof}
 (\rho f)((\rho_\alpha)_\alpha,\chi)&:= (\rho f)(\rho_1,\dots,\rho_N,\chi)\\
&:=  h(\chi) \rho\psi_{\textrm{L}}(\rho_1,\dots,\rho_N) + (1-h(\chi))
\rho\psi_{\textrm{V}}(\rho_1,\dots,\rho_N).
\end{split}
\end{equation}
By definition, the chemical potentials are given by 
\begin{equation}
\mu_\alpha := \frac{\del(\rho \psi)}{\del \rho_\alpha} = \frac{\del (\rho f)}{\del \rho_\alpha}.
\end{equation}
This leads to the following system of equations for the partial mass densities $\rho_\alpha,$ the barycentric velocity $\bv,$
the phase field parameter $\chi$ and the electrical potential $\varphi,$ where the equation for $\rho_N$ is replaced by the evolution equation
for $\rho=\sum_{\alpha=1}^N \rho_\alpha:$  
\begin{equation}\label{eq:pde}
\begin{split}
0=&\del_t \rho_\alpha + \diver (\rho_\alpha \bv ) - 
\diver \Bigg( \betasum M_{\alpha \beta} \bigg( \nabla \frac{\mu_\beta -
  \mu_N}{T} 
+ \frac{1}{T}\bigg( \frac{z_\beta e_0}{m_\beta} - \frac{z_N e_0}{m_N}  \bigg)
\nabla \varphi \bigg) \Bigg) \\ 
& - \sum_{i=1}^{N_R} m_\alpha \gamma_\alpha^i M_{\mathrm{r}}^i
\Bigg(1-\exp\bigg( \frac{1}{kT} \sum_{\beta=1}^N m_\beta \gamma_\beta^i
\mu_\beta\bigg)\Bigg) , \qquad \qquad \qquad \alpha=1, ..., N-1,\\
 0=&\del_t \rho + \diver(\rho \bv), \\
0=& \del_t (\rho \bv) + \diver (\rho \bv \otimes \bv) + \nabla \bigg(\alphasum \rho_\alpha \mu_\alpha - \rho f -W - \frac{\gamma}{2} |\nabla \chi|^2\bigg) 
+ \gamma \, \diver(\nabla \chi \otimes \nabla \chi)
 \\&  - \diver (\sns) 
+\veps_0\diver \left((1+ s(\chi)) \left(\frac{1}{2} |\nabla \varphi|^2 \unity - \nabla \varphi \otimes \nabla \varphi \right)\right),\\
0=& \del_t\chi + \bv \cdot \nabla \chi + \frac{\tau}{\rho} \left( W' -\gamma \Delta \chi + \frac{\del (\rho f)}{\del \chi} - \frac{\veps_0}{2} s'(\chi) |\nabla \varphi|^2 \right),\\
0=&\veps_0 \, \diver ((1+s(\chi)) \nabla \varphi) + n^{\textrm{F}},
\end{split}
\end{equation}
 where $n^{\textrm{F}} := e_0\sum_{\alpha=1}^N \frac{z_\alpha}{m_\alpha} \rho_\alpha$ is the free charge density
and $M_{\textrm{r}}^i : = R_{\textrm{f}}^i.$

\subsection{Energy inequality}
With respect to the stability of the system \eqref{eq:pde}, we can prove an energy inequality.
Let $\Omega(t) \subset \R^3$ be a moving open and bounded material domain with $C^1$-boundary, and $T_f>0$ some time up to which we assume classical solutions of
\eqref{eq:pde} to exist.

  \begin{lemma}[Energy inequality]\label{lem:inequality}
 Let $((\rho_\alpha)_\alpha,\bv,\chi,\varphi)$ be a classical solution of
 \eqref{eq:pde} in $\Omega_{T_f}:=\bigcup_{t\in (0,T_f)} \Omega(t) \times \{t\}$ and let  $\varphi$ satisfy
the Laplace equation in $\R^3\setminus \Omega(t)$ for all $t \in (0,T_f).$ 
In addition, let the following boundary conditions be satisfied on $\partial \Omega(t)$ for all $t \in (0,T_f)$:
$  \bs J_\alpha=0, \nabla \chi \cdot \bs n=0,$ where $\bs n$ denotes the normal vector to $\del \Omega(t).$ Then the following
inequality holds:
\begin{equation}\label{eq:ei}
\begin{split}
& \frac{\operatorname{d}}{\operatorname{d}t}\Big( \int_\Omega W(\chi) + \frac{\gamma}{2} |\nabla \chi|^2 + (\rho f)((\rho_\alpha)_\alpha,\chi) 
+ \frac{\veps_0}{2} (1+s(\chi)) |\nabla \varphi|^2 + \frac{\rho}{2} |\bv|^2 \dx
 + \int_{\R^3\setminus \Omega} \frac{\veps_0}{2}|\nabla \varphi|^2  \dx \Big)\\
&=\int_{\del \Omega} \eps_0 \bs v \cdot\Big( \nabla \varphi_+ \otimes \nabla \varphi_+ - \nabla \varphi_- \otimes \nabla \varphi_-
  + (|\nabla \varphi_-|^2  - |\nabla \varphi_+|^2) \unity \Big) \cdot {\bs n}
  \operatorname{d} {\bs a}- 
T \int_\Omega \zeta \dx.
 \end{split}
\end{equation}
\end{lemma}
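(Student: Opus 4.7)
The plan is to differentiate each summand of the total energy in time, use the Reynolds transport theorem on the material domain $\Omega(t)$ to pull the time derivative inside, and then substitute the evolution equations \eqref{eq:pde} together with the constitutive relations \eqref{R0}--\eqref{R5} so that the bulk remainder can be identified with the negative entropy production $-T\int_\Omega \zeta\,\dx$ derived in \eqref{UG2c}. Throughout, the identities $\partial_t\rho + \div(\rho\bv)=0$ and the Reynolds formula $\frac{d}{dt}\int_{\Omega(t)} g\,\dx = \int_{\Omega(t)}(\partial_t g + \div(g\bv))\,\dx$ are used repeatedly, and boundary contributions on $\partial\Omega(t)$ are controlled via $\bs J_\alpha\cdot\bs n=0$ and $\nabla\chi\cdot\bs n=0$.

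\textbf{Matter contributions.} First, taking the $L^2$-inner product of the momentum balance with $\bv$ and applying the mass balance yields the kinetic-energy identity $\partial_t(\tfrac{\rho}{2}|\bv|^2) + \div(\tfrac{\rho}{2}|\bv|^2\bv) = \bv\cdot[\div\snsmath - \nabla(\sum\rho_\alpha\mu_\alpha - \rho f - W - \tfrac{\gamma}{2}|\nabla\chi|^2) - \gamma\div(\nabla\chi\otimes\nabla\chi) - \veps_0\div(\cdots)]$, with the last bracket being the Maxwell stress. Second, differentiating $W(\chi)+\tfrac{\gamma}{2}|\nabla\chi|^2+\rho f$ along the material derivative and using the Allen--Cahn equation produces the terms $(W'+\partial_\chi(\rho f)-\gamma\Delta\chi)D_t\chi$, which by \eqref{eq:pde}$_4$ equal $-\tfrac{\rho}{\tau}(D_t\chi)^2 + \tfrac{\veps_0}{2}s'(\chi)|\nabla\varphi|^2 D_t\chi$, together with $\sum_\alpha \mu_\alpha(r_\alpha-\div\bs J_\alpha)$ coming from the partial-mass balances and the capillary term $\gamma\nabla\chi\cdot\nabla D_t\chi$; after integration by parts using $\nabla\chi\cdot\bs n=0$, the capillary term cancels exactly against $\bv\cdot\gamma\,\div(\nabla\chi\otimes\nabla\chi)$ produced by the momentum balance, while $\bv\cdot\nabla(\rho f + W + \tfrac{\gamma}{2}|\nabla\chi|^2-\sum\rho_\alpha\mu_\alpha)$ is absorbed into the Reynolds fluxes and the pressure part of the non-viscous stress via the Gibbs--Duhem relation \eqref{GD4}.

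\textbf{Electrostatic contributions.} For the interior electric energy I would differentiate the Poisson equation $\veps_0\div((1+s(\chi))\nabla\varphi)=-n^{\textrm{F}}$ in time, giving an implicit expression for $\partial_t\varphi$, and use $\partial_t(\tfrac{\veps_0}{2}(1+s(\chi))|\nabla\varphi|^2) = \tfrac{\veps_0}{2}s'(\chi)\partial_t\chi|\nabla\varphi|^2 + \veps_0(1+s(\chi))\nabla\varphi\cdot\nabla\partial_t\varphi$. After integration by parts and substitution of Poisson, the $s'$-term cancels the corresponding contribution of the Allen--Cahn equation, and the remaining volume term becomes $\varphi\,\partial_t n^{\textrm{F}}$ which, via the partial-mass balances, combines with the diffusion fluxes to produce exactly $-\sum_\alpha \bs J_\alpha\cdot\nabla(z_\alpha e_0\varphi/m_\alpha)$ and reaction contributions $-\sum_i(R_{\rm f}^i-R_{\rm b}^i)\sum_\alpha m_\alpha\gamma_\alpha^i\mu_\alpha$, both appearing in \eqref{UG2c}. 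For the exterior, the analogous Reynolds transport on $\R^3\setminus\Omega(t)$ together with $\Delta\varphi=0$ yields a pure boundary integral. Matching the interior and exterior contributions on $\partial\Omega(t)$, using continuity of $\varphi$ and accounting for the jump in the normal derivative, collapses all boundary pieces into the stated jump of the Maxwell stress contracted with $\bv\cdot\bs n$.

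\textbf{Assembly and main obstacle.} Finally, collecting the remaining bulk terms I obtain $-\snsmath:D(\bv) - \sum_\alpha\bs J_\alpha\cdot(\nabla\tfrac{\mu_\alpha-\mu_N}{T} - \tfrac{1}{T}(\tfrac{z_\alpha e_0}{m_\alpha}-\tfrac{z_N e_0}{m_N})\bs E) - \tfrac{1}{T}\sum_i(R^i_{\rm f}-R^i_{\rm b})\sum_\alpha m_\alpha\gamma_\alpha^i\mu_\alpha - \tfrac{1}{T}\xi_\chi\mu_\chi$, times $T$; by the special-case constitutive choices \eqref{R1}--\eqref{R5} this is precisely $T\zeta$ as in \eqref{UG2c} (with $\bs q=0$, $\bs M=0$, $\bs J_\chi=0$). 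I expect the chief technical obstacle to be the electrostatic bookkeeping across $\partial\Omega(t)$: keeping track of the different permittivities $(1+s(\chi))$ inside versus $1$ outside, using the implicit time derivative of the elliptic Poisson equation in a moving domain, and verifying that every boundary term except the jump of the Maxwell stress cancels exactly. Once that cancellation is done, the identification with \eqref{UG2c} is algebraic.
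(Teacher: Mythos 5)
Your proposal is correct in outline, but it takes a genuinely different and much more laborious route than the paper. The paper's proof is short because it exploits the thermodynamic structure already built in Section \ref{sec:electrolyte_model}: it first recognizes the integrand of \eqref{eq:ei} as $\rho e - T\rho s + e^{\textrm{e}} + \frac{\rho}{2}|\bv|^2$ (using $\rho\psi=\rho e-\bs P\cdot\bm{\mathcal{E}}-T\rho s$ and $\bs P=\veps_0 s(\chi)\bs E$), applies the transport theorem on the moving domain, and then invokes the local balances of total matter energy \eqref{EoB3}, of electromagnetic energy \eqref{A1-3}$_2$, and of entropy \eqref{EntB1}; since total (matter plus electromagnetic) energy is conserved, the bulk terms collapse at once to $-T\int_\Omega\zeta\,\dx$ and everything else is a boundary integral. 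That boundary integral is then reduced using the explicit entropy flux \eqref{EntF1} together with $\bs J_\alpha\cdot\bs n=0$, $\nabla\chi\cdot\bs n=0$, the momentum jump relation $\bs\sigma^{\textrm{e}}_+=\bs\sigma+\bs\sigma^{\textrm{e}}_-$ across $\partial\Omega$, and the quasi-static Maxwell stress $\bs\sigma^{\textrm{e}}=\eps_0\nabla\varphi\otimes\nabla\varphi-e^{\textrm{e}}\unity$ -- which is exactly the device that disposes of the ``electrostatic bookkeeping across $\partial\Omega(t)$'' you single out as your chief obstacle. Your plan instead re-derives the identity directly from \eqref{eq:pde} by testing each equation and tracking cancellations; this is feasible and amounts to re-doing by hand the algebra that the paper packaged once and for all into the entropy production \eqref{UG2c}, so it buys independence from the modeling framework at the cost of considerable computation that your write-up sketches but does not carry out. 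One small misattribution you should fix if you pursue your route: the reaction contribution $-\sum_i(R^i_{\mathrm{f}}-R^i_{\mathrm{b}})\sum_\alpha m_\alpha\gamma_\alpha^i\mu_\alpha$ in \eqref{UG2c} arises from $\sum_\alpha\mu_\alpha r_\alpha$ in the time derivative of $\rho f$, not from the electrostatic term $\varphi\,\partial_t n^{\textrm{F}}$; the reaction part of the latter vanishes identically by charge conservation \eqref{EoB5b}$_1$, i.e. $\sum_\alpha z_\alpha\gamma_\alpha^i=0$.
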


\begin{proof}
 By definition of $\rho \psi$, we may write the left hand side of \eqref{eq:ei} as
\begin{equation}
 \frac{\operatorname{d}}{\operatorname{d}t}\Big( \int_\Omega \rho e  - T \rho s + e^{\textrm{e}}+ \frac{\rho}{2} |\bv|^2 \dx
 + \int_{\R^3\setminus \Omega} e^{\textrm{e}} \dx \Big)=:\frac{\operatorname{d}}{\operatorname{d}t}A.
\end{equation}
Using transport theorems, we can express $\frac{\operatorname{d}}{\operatorname{d}t}A$ as
\begin{multline}
\frac{\operatorname{d}}{\operatorname{d}t}A = \int_\Omega 
\bigg( \Big(\rho e + \frac{\rho}{2} |\bs v|^2 + e^{\textrm{e}} \Big)_t + 
\operatorname{div} \Big((\rho e + \frac{\rho}{2} |\bs v|^2+  e^{\textrm{e}}
)\bs v \Big) \bigg)\dx\\
+ \int_{\del \Omega} ( e^{\textrm{e}}_- - e^{\textrm{e}}_+) \, \bs v \cdot \bs n
\operatorname{d} {\bs a }
- T \int_\Omega \Big( (\rho s)_t + \operatorname{div} (\rho s \bs v) \Big)\dx,
\end{multline}
where $e^{\textrm{e}}_-$, $e^{\textrm{e}}_+$ denotes the trace of $e^{\textrm{e}}$  on $\del \Omega$ from the inside and outside of $\Omega,$ respectively.
In view of the local balances \eqref{EoB3} and \eqref{A1-3}$_2$, we infer
\[ \frac{\operatorname{d}}{\operatorname{d}t}A= \int_{\del \Omega} \Big(  - \bs q \cdot {\bs n} + 
\bs v \cdot \bs \sigma \bs n + (e^{\textrm{e}}_- - e^{\textrm{e}}_+)\bs v 
\cdot \bs n + T \bs \Phi \cdot {\bs n}
 \Big) \operatorname{d} {\bs a} - T \int_\Omega \zeta \dx.
\]
Inserting the definition of $\bs \Phi$ in \eqref{EntF1} and noting that $ \bs J_\alpha=0$ on the boundary and $\bs J_\chi=0$ due to our choice of a model of Allen-Cahn type,
we find
\[ \frac{\operatorname{d}}{\operatorname{d}t} A= \int_{\del \Omega} \bs v \cdot\Big( \bs \sigma  + (e^{\textrm{e}}_- - e^{\textrm{e}}_+)\unity
 \Big)\cdot \bs n \operatorname{d}{ \bs a}
- T \int_\Omega \zeta \dx.
\]
From the momentum balance we infer
\begin{equation}
\bs \sigma^{\textrm{e}}_+ = \bs \sigma + \bs \sigma^{\textrm{e}}_-,
\end{equation}
where $\bs \sigma^{\textrm{e}}_-,\bs \sigma^{\textrm{e}}_+$ are the corresponding traces. Note, that in the quasi-static case 
\begin{equation}
 \bs \sigma^{\textrm{e}} = \eps_0 \nabla \varphi \otimes \nabla \varphi -
 e^{\textrm{e}} \unity.
\end{equation}
Therefore, $\frac{\operatorname{d}}{\operatorname{d}t}A$ becomes
\[ 
\frac{\operatorname{d}}{\operatorname{d}t}A= \int_{\del \Omega} \eps_0 \bs v \cdot\Big( \nabla \varphi_+ \otimes \nabla \varphi_+ - \nabla \varphi_- \otimes \nabla \varphi_-
  + (|\nabla \varphi_-|^2  - |\nabla \varphi_+|^2){\bf 1} \Big)\cdot \bs n 
\operatorname{d}{\bs a} - T \int_\Omega \zeta \dx.
\]
\end{proof}

\begin{remark}
 Only in case $\bs v=0$ on $\del \Omega$ the available free energy $A$ is a Lyapunov function. 
 In the current context, the admissible boundary conditions for Maxwell's equations are 
\[ \bs n \times \nabla \varphi_+ =\bs n \times \nabla \varphi_-, \quad \text{ and }\quad
 \eps_0 (1 + s(\chi_-)) \nabla \varphi_- \cdot \bs n =  \eps_0  \nabla
 \varphi_+ \cdot \bs n 
+ n^F_{\del \Omega},
\]
where $n^F_{\del \Omega}$ are free charges on $\del \Omega.$ Thus, even if $n^F_{\del \Omega}=0$ the normal component of the electric field is not continuous, in general. 

\end{remark}

\section{Non-dimensionalization}
To avoid physically meaningless scalings, we  nondimensionalize problem \eqref{eq:pde}.
To this end, we introduce reference quantities denoted by superscript $^c$ and non-dimensional quantities denoted by $^*$, i.e.,
\begin{align*}
&{\bs x}= x^c {\bs x}^*, \ t= t^c t^*, \ \rho_\alpha = \rho^c \rho_\alpha^*, \ \bv = v^c \bv^* , \ \lambda_{1,2} = \lambda^c \lambda_{1,2}^*, \ \tau = \tau^c \tau^*, \ 
M_{\alpha \beta} = M^c M_{\alpha \beta}^*, \\ & M^i_{\textrm{r}} = M_{\textrm{r}}^c (M_{\textrm{r}}^i)^*, 
\ \gamma_\beta= \gamma_{\textrm{r}}^c (\gamma_\beta^i)^*,\ m_\beta = m^c m_\beta^*, \ W= W^c W^* , \ \rho f =(\rho f)^c  (\rho f)^*, \
\gamma = \gamma^c \gamma^*,\\
 & \frac{\mu_\beta}{T} = \mu^c \mu_\beta^*, \ \varphi = \varphi^c \varphi^*, \ s = s^c s^*,\ \veps_0=  \veps_0^c \veps_0^*, \ e_0 z_\alpha = z^c z_\alpha^*.
\end{align*}
Note that $\chi$ and $h(\chi)$ do not need to be nondimensionalized and $\rho= \rho^c \rho^*$ with $\rho^*= \sum_\alpha \rho_\alpha^*$.
As we are interested in hyperbolic scalings we set $x^c = v^c t^c$ and $ (\rho f)^c = \rho^c \mu^c$ and define the following Mach and Reynolds numbers
\begin{equation}
\operatorname{M}_W := v^c \sqrt{\frac{\rho^c}{W^c}}, \quad \operatorname{M}_{\rho f} := v^c \sqrt{\frac{\rho^c}{(\rho f)^c}}, \quad
\operatorname{Re}:= \frac{\rho^c v^c x^c }{\lambda^c},
\end{equation}
as well as additional non-dimensional quantities related to the reaction and diffusion rates
\begin{equation}
\bar M_{\textrm{d}} := \frac{M^c \mu^c}{ v^c x^c \rho^c}   , \quad \bar M_{\textrm{r}} := \frac{M_{\textrm{r}}^c \gamma_{\textrm{r}}^c m^c t^c}{\rho^c},\quad
\bar A := \frac{m^c \gamma_{\textrm{r}}^c \mu^c}{kT},
 \quad \bar \tau := \frac{\tau^c t^c W^c }{\rho^c},
\end{equation}
and the electrical effects
\begin{equation}
\bar M_{\textrm{e}} := \frac{M^c z^c \varphi^c}{\rho^c m^c v^c x^c}, \quad \bar \veps:= \frac{\veps_0^c (\varphi^c)^2}{\rho^c (v^c)^2 (x^c)^2}
, \quad \underline \veps := \frac{\veps_0^c m^c \varphi^c}{(x^c)^2 z^c \rho^c}.
\end{equation}
We  assume that the small parameter  
\begin{equation}
\delta := \sqrt{\frac{\gamma^c}{(x^c)^2 W^c}}
\end{equation}
is proportional to the width of the interfacial layer. This can be justified
by $\Gamma$-limit techniques, cf. \cite{Ste88,ORS90,DK10,LM89}.
Then, suppressing $^*$ in the notation, the nondimensionalized version of \eqref{eq:pde} reads 
\begin{equation}
\begin{split}
0=&\del_t \rho_\alpha + \diver (\rho_\alpha \bv) - \diver \left(\betasum M_{\alpha \beta}
 \left( \bar M_{\textrm{d}} \nabla (\mu_\beta - \mu_N) +\bar M_{\textrm{e}}\left( \frac{z_\beta}{m_\beta} - \frac{z_N}{m_N}  \right) \nabla \varphi\right) \right)\\ 
& - \bar M_{\textrm{r}} \sum_{i=1}^{N_R} m_\alpha \gamma_\alpha^i  \bigg( 1 - 
\exp\bigg( \bar A \sum_{\beta=1}^N m_\beta \gamma_\beta^i \mu_\beta\bigg)\bigg),
 \\
0=& \del_t \rho + \diver(\rho \bv), \\
0=& \del_t (\rho \bv) + \diver (\rho \bv \otimes \bv) + \frac{1}{\operatorname{M}_{\rho f}^2}
\nabla 
\bigg(\alphasum \rho_\alpha \mu_\alpha - \rho f\bigg) 
- \frac{1}{\operatorname{M}_W^2} \nabla \big(W + \frac{\gamma}{2}\delta^2 |\nabla \chi|^2 \big)\\
+& \frac{\gamma\delta^2}{\operatorname{M}_W^2} \diver\big(\nabla \chi \otimes \nabla \chi\big)
 - \frac{1}{\operatorname{Re}}\diver (\sns) + \bar \veps\veps_0\diver \left((1+ s^cs(\chi)) \left(\frac{|\nabla \varphi|^2}{2} \unity 
- \nabla \varphi \otimes \nabla \varphi \right)\right),\\
0=& \del_t\chi + \bv \cdot \nabla \chi + \bar \tau \frac{\tau}{\rho} \left( W' -\gamma\delta^2 \Delta \chi + \frac{\operatorname{M}_W^2}{\operatorname{M}_{\rho f}^2}\frac{\del \rho f}{\del \chi}
 - \bar \veps \operatorname{M}_W^2 s^c\frac{\veps_0}{2} s'(\chi) |\nabla \varphi|^2 \right),\\
0=&\underline \veps \veps_0\diver ((1+s^c s(\chi)) \nabla \varphi) + n^F.
\end{split}
\end{equation}
In the sequel, we will consider two scaling regimes. In both of them we choose
\[ \bar A=1, \ s^c=1, \ \bar M_{\textrm{d}}=1,\ \bar M_{\textrm{r}}=1, \ \bar M_{\textrm{e}} =1, \ \operatorname{M}_W = \sqrt{\delta},
\ \operatorname{Re}= \frac{1}{\delta^2}, \ \operatorname{M}_{\rho f}=1,
\ \bar \tau = \frac{1}{\delta^2}.\]
In the {\it uncoupled regime} we consider
\begin{equation}
\bar \veps = \underline \veps =1,
\end{equation}
while we consider 
\begin{equation}
\bar \veps = \underline \veps =\delta 
\end{equation}
in the {\it coupled regime}.

\section{Sharp interface limit of the uncoupled regime \label{sec:si}}

In this section we are going to establish the sharp interface limit of the uncoupled regime, i.e., here the "small" parameter in 
the electro-static equations is not coupled to the thickness of the interfacial layer.
We use the methodology of matched asymptotic expansions. For a detailed exposition of this method we refer to e.g. \cite{Lag, CaFi}.
The treatment of a simplified version of the model at hand (without electrical
effects) can be found in \cite{DGK14}. For any quantity $f$ indexed by
$\alpha$ we will write $(f_\alpha)_\alpha$ instead of $(f_\alpha)_{\alpha=1,...\,,N}$ for brevity.

We begin by defining outer, inner and matching solutions.
The outer equations are obtained by inserting expansions of the quantities in $\delta$ into the scaled system of equations.
\begin{definition}\label{def:outersol:uc}
A tuple  $( ( \rho_{\alpha,0})_{\alpha}, \bv_0 , \chi_0, \chi_1, \varphi_0)$  with 
\begin{equation}\label{eq:outerreg:uc} \begin{split}
\rho_{\alpha,0} &\in C^0([0,T_f) , C^2(\Omega^\pm,\R_+)) \cap C^1([0,T_f) , C^0(\Omega^\pm,\R_+)),\\
\bv_0 &\in C^0([0,T_f) , C^1(\Omega^\pm,\R^3)) \cap C^1([0,T_f) , C^0(\Omega^\pm,\R^3)),\\
\chi_0 &\in  C^0([0,T_f) , C^2(\Omega^\pm,\R)),\\
\chi_1 &\in  C^0([0,T_f) , C^1(\Omega^\pm,\R)),\\
\varphi_0 &\in  C^0([0,T_f) , C^2(\Omega^\pm,\R))
\end{split}\end{equation}
 is called an {\it outer solution of the  uncoupled regime} provided 
\begin{align}
\label{eq:out1:uc} 0=&\del_{t} \rho_{\alpha,0} + \diver (\rho_{\alpha,0} \bv_0) 
- \diver \bigg(\betasum M_{\alpha \beta} \nabla (\mu_{\beta,0} - \mu_{N,0} +
\Big( \frac{z_\beta}{m_\beta} - \frac{z_N}{m_N} \Big)\varphi_0)\bigg)
\\ \nonumber & - \sum_{i=1}^{N_R} m_\alpha \gamma_\alpha^i M_{\textrm{r}}^i  \bigg(1-  \exp\bigg( \sum_{\beta=1}^N m_\beta \gamma_\beta^i \mu_{\beta,0}\bigg)\bigg), \\
\label{eq:out2:uc}0=& \del_{t }\rho_0 + \diver(\rho_0 \bv_0), \\
\label{eq:out3:uc}0=& W'(\chi_0) , \text{ in particular, }  \nabla (W(\chi_0))=0, \\
\label{eq:out4:uc}0=&\del_{t} (\rho_0 \bv_0) + \diver (\rho_0 \bv_0 \otimes \bv_0) + 
\nabla \bigg(\alphasum   \rho_{\alpha,0} \mu_{\alpha,0} - \rho f_0\bigg) 
- \nabla \big( W'(\chi_0)\chi_1 \big)   \\
\nonumber & +\veps_0 \, \diver \left((1+ s(\chi_0)) \left(\frac{1}{2} |\nabla \varphi_0|^2 \unity - \nabla \varphi_0 \otimes \nabla \varphi_0 \right)\right),\\
\label{eq:out6:uc} 
 0=& W''(\chi_0)\chi_1 + \frac{\partial \rho f}{\partial \chi}(\rho_{1,0},\dots, \rho_{N,0}, \chi_0) - \frac{\veps_0}{2} s'(\chi_0) |\nabla \varphi_0|^2,\\
\label{eq:out7:uc}0=& \veps_0\diver ((1+s(\chi_0)) \nabla \varphi_0) + \sum_{\alpha=1}^N \frac{z_\alpha}{m_\alpha} \rho_{\alpha,0} 
\end{align}
are satisfied, where we used the following abbreviations
\begin{equation}
\mu_{\alpha,0} = \mu_\alpha (\rho_{1,0},\dots, \rho_{N,0}, \chi_0) , \quad \rho f_0 = \rho f(\rho_{1,0},\dots, \rho_{N,0}, \chi_0).
\end{equation}
Note that \eqref{eq:out1:uc} holds for $\alpha=1,...\,,N-1.$
\end{definition}

The equations defining inner solutions are obtained from the scaled equations by a change of variables and inserting expansions in $\delta.$
\begin{definition}\label{def:innersol:uc}
A tuple $( (R_{\alpha,0})_{\alpha},(R_{\alpha,1})_{\alpha}, \bV_0,\Chi_0, \Chi_1, \Phi_0,\Phi_1)$ with $\Chi_0 \not\equiv 0$ and
\begin{equation}\label{eq:innerreg:uc}
\begin{split}
R_{\alpha,0} &\in C^0([0,T_f),C^0(U,C^2(\mathbb{R}_+))), \\
R_{\alpha,1} &\in C^0([0,T_f),C^0(U,C^2(\mathbb{R}))), \\
\bV_0 &\in C^0([0,T_f),C^0(U,C^1(\mathbb{R}^3))), \\
\Chi_0 &\in C^0([0,T_f),C^1(U,C^0(\mathbb{R}))) \cap C^0([0,T_f),C^0(U,C^2(\mathbb{R}))),\\
\Chi_1 &\in C^0([0,T_f),C^0(U,C^2(\mathbb{R}))),\\
\Phi_0 &\in C^0([0,T_f),C^1(U,C^0(\mathbb{R}))) \cap C^0([0,T_f),C^0(U,C^2(\mathbb{R}))),\\
\Phi_1 &\in C^0([0,T_f),C^0(U,C^2(\mathbb{R})))
\end{split}
\end{equation}
is called an {\it inner solution of the uncoupled regime} with normal velocity $\wnu$ provided
\begin{align}
\label{eq:inn0:uc} 0 &= \left(\frac{1}{2} (1+ s(\Chi_0)) |\Phi_{0,z}|^2 - (1+ s(\Chi_0)) |\Phi_{0,z}|^2 \right)_z,\\
 \label{eq:inn1:uc} 0 &= \bigg(\betasum M_{\alpha \beta} (\Mu_{\beta,0} - \Mu_{N,0})_z  
\bigg)_z \text{ for } \alpha=1,\dots,N-1,\\
 \label{eq:inn2:uc} 0 &= W'(\Chi_0) - \gamma \Chi_{0,zz}, \text{ in particular, } 0 = \bnu (- W(\Chi_0) + \frac{\gamma}{2} \Chi_{0,z}^2 )_z,\\
 \label{eq:inn4:uc} 0 &= (R_0 (\bV_0 \cdot \bnu - \wnu))_z = (j_0)_z,\\
 \label{eq:inn5:uc} 0 &= j_0 \bV_{0,z} + \bnu \bigg(\alphasum R_{\alpha,0} \Mu_{\alpha,0} - RF_0 - W'(\Chi_0)\Chi_1\bigg)_z \\ \nonumber
& \quad + \gamma \bnu (\Chi_{0,z} \Chi_{1,zz} + \Chi_{0,zz} \Chi_{1,z} - \kappa \Chi_{0,z}^2) - \nabla_\Gamma W(\Chi_0) + \gamma  \Chi_{0,zz}\nabla_\Gamma( \Chi_0 ) \\ 
\nonumber & \quad + \frac{\veps_0}{2} \left( (\Phi_{1,z})^2 + |\nabla_\Gamma \Phi_0|^2 \right) (1+s(\Chi_0))_z \bnu
  - \veps_0 ((1+s(\Chi_0)) \Phi_{1,z})_z \left(\Phi_{1,z} \bnu + \nabla_\Gamma \Phi_0  \right),
\\
 \label{eq:inn6:uc}0 &=  \frac{j_0}{\tau} \Chi_{0,z} + W''(\Chi_0) \Chi_1 -\gamma \Chi_{1,zz} + \gamma \kappa \Chi_{0,z} 
+ \!\frac{\partial R F_0}{\partial \chi} \\
&\nonumber\quad- \frac{\veps_0}{2} s'(\Chi_0) \left( (\Phi_{1,z})^2 + |\nabla_\Gamma \Phi_0|^2 \right),\\
 \label{eq:inn7:uc}0 &= \left(  (1+s(\Chi_0)) \Phi_{1,z}\right)_z,\\
\label{eq:inn3:uc} 0 &= (R_{\alpha,0}(\bV_0 \cdot \bnu - \wnu))_z - \betasum M_{\alpha \beta}\Big( (\Mu_{\beta,1} - \Mu_{N,1})_{zz}  
-\kappa (\Mu_{\beta,0} - \Mu_{N,0})_z\\
\nonumber &\quad+ ( \frac{z_\beta}{m_\beta} - \frac{z_N}{m_N}) \Phi_{1,zz}
\Big),
\end{align}
where $\bnu$ denotes the unit normal vector to the zeroth-order interface pointing into the liquid, $\nabla_\Gamma$ is the surface gradient and $\kappa$ is the mean curvature of the interface. In addition, 
we used
\begin{equation}\label{eq:abbrev}
 \begin{split}
 \Mu_{\beta,0} &:= \mu_\beta(R_{1,0},\dots, R_{N,0},\Chi_0),\  RF_0 = \rho f (R_{1,0},\dots, R_{N,0},\Chi_0),\\
 \frac{\partial RF_0}{\partial \chi}&:= \frac{\partial \rho f}{\partial \chi} (R_{1,0},\dots,R_{N,0},\Chi_0),
\quad j_0 := R_0 (\bV_0 \cdot \bnu - \wnu),\\
\Mu_{\beta,1}&:= \alphasum \frac{\del \mu_\beta}{\del \rho_\alpha}(R_{1,0},\dots,R_{N,0},X_0) R_{\alpha,1} 
+ \frac{\del \mu_\beta}{\del \chi} (R_{1,0},\dots,R_{N,0},X_0)X_1.
\end{split}
\end{equation}
Note that we have already simplified \eqref{eq:inn1:uc}--\eqref{eq:inn3:uc} using $\Phi_{0,z}=0$ which is an easy consequence of \eqref{eq:inn0:uc},
 provided the matching condition
$\Phi_{0,z}(z) \rightarrow 0$ for $z \rightarrow \pm \infty$ is satisfied.
This seems reasonable to keep the notation short(er) and is justified as we will only consider matching solutions in the sequel.
\end{definition}

Finally, we define matching solutions which consist of compatible outer and inner solutions.
\begin{definition}\label{def:matchsol:uc}
A tuple $ ( (\rho_{\alpha,0})_{\alpha}, \bv_0 , \chi_0, \chi_1, \varphi_0, (R_{\alpha,0})_{\alpha},
 (R_{\alpha,1})_{\alpha}, \bV_0,\Chi_0, \Chi_1, \Phi_0, \Phi_1)$
is called a {\it matching solution of the uncoupled regime} provided $( ( \rho_{\alpha,0})_{\alpha}, \bv_0 , \chi_0, \chi_1,\varphi_0)$
is an outer- and $((R_{\alpha,0})_{\alpha}, (R_{\alpha,1})_{\alpha},
 \bV_0,\Chi_0, \Chi_1,\Phi_0,\Phi_1)$ is an inner solution and both are linked by the standard matching conditions, see \cite{Lag,CaFi}.
\end{definition}

\begin{theorem}\label{thrm:uc}
 Let $( ( \rho_{\alpha,0})_{\alpha}, \bv_0 , \chi_0, \chi_1,\varphi_0,  (R_{\alpha,0})_{\alpha}, (R_{\alpha,1})_{\alpha},
 \bV_0,\Chi_0, \Chi_1, \Phi_0, \Phi_1)$
  be a {\bf matching solution of the uncoupled regime},
then the following equations are satisfied in the bulk regions $\Omega^\pm$:
 \begin{align}
  \label{uc:b1} \pm 1= & \chi_0, \chi_1=0, \\
  \label{uc:b2} 0=& \del_{t} \rho_{\alpha,0} + \diver (\rho_{\alpha,0} \bv_0)  
- \diver \bigg(\betasum M_{\alpha \beta} \nabla \Big(\mu_{\beta,0} - \mu_{N,0} +
\Big(\frac{z_\beta}{m_\beta}-\frac{z_N}{m_N}\Big) \varphi_0 \Big)  \bigg)\\
\nonumber & - \sum_{i=1}^{N_R} m_\alpha \gamma_\alpha^i M_{\mathrm{r}}^i\bigg(
1- \exp
\bigg(\sum_{\beta=1}^N m_\beta \gamma_\beta^i \mu_{\beta,0}\bigg)\bigg), \\
 \label{uc:b4}0=& \del_{t }\rho_0 + \diver(\rho_0 \bv_0), \\
  \label{uc:b3}0=& \del_{t} (\rho_0 \bv_0) + \diver (\rho_0 \bv_0 \otimes \bv_0) + 
\nabla \left(\alphasum   \rho_{\alpha,0} \mu_{\alpha,0} - \rho f_0 + \frac{\veps_0}{2} (1+s(\chi_0))|\nabla \varphi_0|^2 \right)\\
 \nonumber & -\left(\veps_0 (1+s(\chi_0)) \nabla \varphi_0 \otimes \nabla \varphi_0 \right), \\
  \label{uc:b5}0=&\veps_0 \diver ((1+s(\chi_0))\nabla \varphi_0) + \sum_{\alpha=1}^N \frac{z_\alpha}{m_\alpha} \rho_{\alpha,0} ,
 \end{align}
where \eqref{uc:b2} is valid for $\alpha = 1, \dots, N-1.$
 Moreover, the following conditions are fulfilled at the interface:
  \begin{align}
    \label{uc:i1} 0=& \jump{\mu_{\alpha,0} - \mu_{N,0}}  ,\\
    \label{uc:i2} 0=& \jump{\rho_0 ( \bv_0 \cdot \bnu - \wnu )}, \\
    \label{uc:i2b}0= & \jump{\rho_{\alpha,0} ( \bv_0 \cdot \bnu - \wnu )} -
\Bigg[ \! \Bigg[
\betasum M_{\alpha\beta} \nabla\Big(\mu_{\beta,0}-\mu_{N,0} +
\Big(\frac{z_\beta}{m_\beta}- \frac{z_N}{m_N}\Big) \varphi_0  \Big)\, \bnu
\Bigg]  \! \Bigg] ,\\
    \label{uc:i3} 0=& 
\Bigg[ \! \Bigg[
j_0 \bv_0 + \bigg( \alphasum   \rho_{\alpha,0}
      \mu_{\alpha,0} - \rho f_0 \bigg) \bnu +
 \veps_0 (1+s(\chi_0)) \left( \frac{1}{2}|\nabla \varphi_0|^2  -
                         \nabla \varphi_0 \otimes  \nabla \varphi_0  \right)
                         \bnu \Bigg] \! \Bigg]\\
\nonumber &\quad-
                       \gamma \kappa \bnu \int_{-\infty}^\infty  (\Chi_{0,z})^2 \dz,\\
    \label{uc:i4}0= & \Big[ \! \Big[   \frac{j_0^2}{2\rho_0^2} + \mu_{N,0}  \Big] \! \Big]
 +\frac{j_0}{\tau} \int_{-\infty}^\infty \frac{1}{R_0} (\Chi_{0,z})^2 \dz,\\
   \label{uc:i5}0= &\jump{(1+s(\chi_0))\nabla \varphi_0\cdot \bnu}, \\
      \label{uc:i6}0= &  \jump{\varphi_0},
 \end{align}
where $j_0:=\rho_0^\pm ( \bv_0^\pm \cdot \bnu - \wnu )$ and  \eqref{uc:i1}, \eqref{uc:i2b} hold for $\alpha= 1, \dots, N-1$.
Moreover, \eqref{uc:i6} implies 
\begin{equation}\label{uc:i6b}
\jump{\nabla \varphi_0 - (\nabla \varphi_0\cdot \bnu) \bnu}=0.
\end{equation}
 \end{theorem}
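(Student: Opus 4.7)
The bulk equations follow directly from the outer expansion. Equation \eqref{eq:out3:uc} forces $W'(\chi_0)=0$, and since $W(\chi)=(\chi-1)^2(\chi+1)^2$ has only the three roots $\chi=\pm 1, 0$, the stable phase-selection picks $\chi_0=\pm 1$ in the respective bulk components $\Omega^\pm$. Because $h$ and $s$ are both constant for $|z|\ge 1$, one has $h'(\pm1)=s'(\pm1)=0$, whence $\partial(\rho f)/\partial\chi$ and $s'(\chi_0)$ vanish in \eqref{eq:out6:uc}, leaving $W''(\chi_0)\chi_1=0$; as $W''(\pm1)=8\neq 0$, this yields \eqref{uc:b1}. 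Substituting $\chi_0=\pm 1$ and $\chi_1=0$ into \eqref{eq:out1:uc}, \eqref{eq:out2:uc}, \eqref{eq:out4:uc}, \eqref{eq:out7:uc} (and absorbing $\nabla(W'(\chi_0)\chi_1)=0$ together with the fact that $\nabla s(\chi_0)=0$ in the bulk) produces \eqref{uc:b2}--\eqref{uc:b5}.

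For the interface conditions I would integrate the inner equations over $z\in\mathbb{R}$ and apply the standard matching conditions linking $z\to\pm\infty$ limits of the inner quantities to the one-sided bulk traces. Equation \eqref{eq:inn0:uc} integrates to $(1+s(\Chi_0))|\Phi_{0,z}|^2\equiv\text{const}$, which by matching must equal $0$, so $\Phi_{0,z}\equiv 0$; integrating once more gives $\Phi_0(z)$ constant in $z$, hence \eqref{uc:i6}, from which \eqref{uc:i6b} follows since only the normal derivative of $\varphi_0$ can jump. The transport equation \eqref{eq:inn4:uc} immediately gives $j_0$ independent of $z$, which is \eqref{uc:i2}. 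Integrating \eqref{eq:inn1:uc} twice and matching $(\Mu_{\beta,0}-\Mu_{N,0})_z\to 0$ at infinity yields constancy of $\Mu_{\beta,0}-\Mu_{N,0}$ in $z$, giving \eqref{uc:i1}. Likewise \eqref{eq:inn7:uc} integrated once gives $(1+s(\Chi_0))\Phi_{1,z}=\text{const}$; matching this constant to the normal flux $(1+s(\chi_0))\nabla\varphi_0\cdot\bnu$ on each side yields \eqref{uc:i5}. Equation \eqref{eq:inn3:uc} integrated over $\mathbb{R}$ together with $(R_{\alpha,0}(\bV_0\cdot\bnu-\wnu))_z$ giving $\jump{\rho_{\alpha,0}(\bv_0\cdot\bnu-\wnu)}$, and the remaining $z$-derivative terms matching to the diffusive fluxes on both sides, produces \eqref{uc:i2b}.

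The two nontrivial conditions are \eqref{uc:i3} and \eqref{uc:i4}. For \eqref{uc:i3} I would integrate \eqref{eq:inn5:uc} over $\mathbb{R}$. The term $j_0\bV_{0,z}$ integrates directly to $\jump{j_0\bv_0}$ by matching. The $\bnu$-component collapses by the fundamental theorem of calculus into the jump of $\sum\rho_{\alpha,0}\mu_{\alpha,0}-\rho f_0$ together with the electromagnetic Maxwell stress (using $\Phi_{0,z}\equiv 0$, so only $\Phi_{1,z}$ and $\nabla_\Gamma\Phi_0$ survive). The $W'(\Chi_0)\Chi_1$ term vanishes in the limits $z\to\pm\infty$ by the bulk result $\chi_1=0$. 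The curvature contribution $-\gamma\kappa\bnu\int\Chi_{0,z}^2\,dz$ is produced directly by the explicit $\kappa\Chi_{0,z}^2$ term, while the combination $\gamma(\Chi_{0,z}\Chi_{1,zz}+\Chi_{0,zz}\Chi_{1,z})_z=\gamma(\Chi_{0,z}\Chi_{1,z})_{zz}$ after one integration gives a quantity whose limits vanish by matching with $\chi_1=0$. Finally the tangential-gradient terms $-\nabla_\Gamma W(\Chi_0)+\gamma\Chi_{0,zz}\nabla_\Gamma\Chi_0$ integrate to zero using \eqref{eq:inn2:uc} (multiply by $\Chi_{0,z}$ to obtain the Modica--Mortola identity $\gamma\Chi_{0,z}^2/2=W(\Chi_0)$, then $\nabla_\Gamma(W(\Chi_0)-\gamma\Chi_{0,z}^2/2)=0$).

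The main obstacle is \eqref{uc:i4}. My plan is to multiply \eqref{eq:inn6:uc} by $\Chi_{0,z}$ and integrate over $\mathbb{R}$. Using $W''(\Chi_0)\Chi_{0,z}\Chi_1=(W'(\Chi_0)\Chi_1)_z-W'(\Chi_0)\Chi_{1,z}$ and $\gamma\Chi_{1,zz}\Chi_{0,z}=(\gamma\Chi_{1,z}\Chi_{0,z})_z-\gamma\Chi_{1,z}\Chi_{0,zz}$, then substituting $\gamma\Chi_{0,zz}=W'(\Chi_0)$ from \eqref{eq:inn2:uc}, the $\Chi_1$-dependent terms cancel, up to boundary contributions that vanish by matching. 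The reaction-pressure term $(\partial RF_0/\partial\chi)\Chi_{0,z}=(RF_0)_z-\sum_\alpha\mu_{\alpha,0}R_{\alpha,0,z}$ combines with $\sum_\alpha\mu_{\alpha,0}R_{\alpha,0,z}=(\sum\mu_\alpha R_\alpha)_z-\sum R_\alpha(\mu_{\alpha,0})_z$ and then, exploiting $\Mu_{\alpha,0}-\Mu_{N,0}=\text{const}$ in $z$ (from \eqref{uc:i1}) together with the mass balance $(j_0)_z=0$, collapses into the clean jump $\jump{j_0^2/(2\rho_0^2)+\mu_{N,0}}$ via the Gibbs--Duhem relation \eqref{GD4}. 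The $j_0/\tau$ term produces $(j_0/\tau)\int R_0^{-1}\Chi_{0,z}^2\,dz$ directly (using $j_0$ constant in $z$). The electric contribution $-\frac{\veps_0}{2}s'(\Chi_0)((\Phi_{1,z})^2+|\nabla_\Gamma\Phi_0|^2)\Chi_{0,z}$ after integration must cancel against boundary contributions; here one uses \eqref{eq:inn7:uc} to write $(1+s(\Chi_0))\Phi_{1,z}$ as constant in $z$ and matches to obtain a perfect differential. The book-keeping of total-derivative terms and the careful use of Gibbs--Duhem is where the real work lies.
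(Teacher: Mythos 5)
Most of your outline tracks the paper's actual proof: the bulk equations and $\chi_0=\pm1$, $\chi_1=0$ are obtained exactly as you describe; \eqref{uc:i6}, \eqref{uc:i2}, \eqref{uc:i1}, \eqref{uc:i5} follow by single integrations of the divergence-form inner equations plus matching; and your direct integration of \eqref{eq:inn3:uc} and of \eqref{eq:inn5:uc} over $\R$ gives \eqref{uc:i2b} and \eqref{uc:i3} by the same computation the paper packages as Fredholm solvability conditions (Lemmas \ref{lem:linear}, \ref{lem:tanv}, \ref{lem:Stefan}) --- for the necessity direction your route is legitimate and arguably more elementary, though it does not recover the paper's stronger ``solvable if and only if'' statements.

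The genuine gap is in your derivation of \eqref{uc:i4}. Multiplying \eqref{eq:inn6:uc} by $\Chi_{0,z}$ and integrating cannot work on its own, for three concrete reasons. First, the kinetic term $\jump{j_0^2/(2\rho_0^2)}$ has no source in \eqref{eq:inn6:uc}; it can only come from $j_0\,(\bV_0\cdot\bnu)_z$ in the normal component of the momentum equation \eqref{eq:inn5:uc}, which your computation never invokes. Second, the term $\gamma\kappa\Chi_{0,z}\cdot\Chi_{0,z}$ survives your integration and leaves a spurious $\gamma\kappa\int\Chi_{0,z}^2\,\dz$ (that term belongs in \eqref{uc:i3}, not \eqref{uc:i4}); likewise the electric term $-\tfrac{\veps_0}{2}s'(\Chi_0)\Chi_{0,z}\big((\Phi_{1,z})^2+|\nabla_\Gamma\Phi_0|^2\big)$ integrates to nonzero Maxwell-stress jumps rather than ``cancelling against boundary contributions.'' Third, your Gibbs--Duhem manipulation produces $\sum_\alpha R_{\alpha,0}(\Mu_{\alpha,0})_z = R_0(\Mu_{N,0})_z$, which is not an exact $z$-derivative, so you cannot extract $\jump{\mu_{N,0}}$; nor can the weight $1/R_0$ in $\frac{j_0}{\tau}\int R_0^{-1}\Chi_{0,z}^2\,\dz$ appear, since \eqref{eq:inn6:uc} only contains $\frac{j_0}{\tau}\Chi_{0,z}$. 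The paper's fix (Lemma \ref{lem:equivalent}) is to \emph{add} $\Chi_{0,z}\times$\eqref{eq:inn6:uc} to the normal part of \eqref{eq:inn5:uc}: the curvature terms and the electric terms then cancel exactly (the latter using \eqref{eq:inn7:uc}), the $\Chi_1$-terms cancel via \eqref{eq:inn2:uc}, and what remains is $j_0(\bV_0\cdot\bnu)_z+\sum_\alpha R_{\alpha,0}(\Mu_{\alpha,0})_z=-\frac{j_0}{\tau}\Chi_{0,z}^2$. Only after using \eqref{eq:inn1:uc} and dividing by $R_0$ does this become the exact derivative identity \eqref{eq:inn8:uc}, whose integral over $\R$ is \eqref{uc:i4}. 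You need to add this combination step; without it the argument for \eqref{uc:i4} does not close.
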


\begin{remark}
Note that all jump conditions in Theorem \ref{thrm:uc} are physically meaningful.
Equations \eqref{uc:i1}, \eqref{uc:i4} and  \eqref{uc:i3}  are generalised Gibbs-Thompson laws, \eqref{uc:i2} ensures conservation of mass, while \eqref{uc:i3} is a dynamic Young-Laplace law which
shows that we have surface tension of order $\delta^0.$ 
Equation \eqref{uc:i5} shows that in this scaling the electrical displacement is 
continuous across the interface,
while \eqref{uc:i6} shows the continuity of the electrical potential which causes the continuity of the tangential part of the electric field, i.e. \eqref{uc:i6b}.
 \end{remark}

We will decompose the proof of Theorem \ref{thrm:uc} into several lemmas. Our first lemma ascertains that the electrical potential is continuous across the interface.
\begin{lemma}
Any $\Phi_0$ satisfying \eqref{eq:inn0:uc} and the matching conditions, fulfils $\Phi_{0,z}=0.$
Thus, \eqref{uc:i6}  and \eqref{uc:i6b} are satisfied.
\end{lemma}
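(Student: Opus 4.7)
The plan is to read \eqref{eq:inn0:uc} as a conservation law in $z$, identify the associated constant via matching, and then invoke positivity of the permittivity to conclude.

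First, I simplify the right hand side of \eqref{eq:inn0:uc}: the two terms inside the outer bracket combine into $-\tfrac{1}{2}(1+s(\Chi_0))|\Phi_{0,z}|^2$, so \eqref{eq:inn0:uc} is equivalent to the statement that
\begin{equation*}
(1+s(\Chi_0))\,|\Phi_{0,z}|^2 \equiv C(\bs s, t)
\end{equation*}
is constant in the stretched normal variable $z$.

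Next, I use the matching condition linking the inner solution $\Phi_0$ to the outer solution $\varphi_0$. Since $\Phi_0(\,\cdot\,,\bs s,t)$ must match the bounded outer field $\varphi_0^\pm$ at the interface, the standard leading order matching condition asserts that $\Phi_0(z,\bs s,t)\to\varphi_0^\pm(\bs s,t)$ as $z\to\pm\infty$; in particular $\Phi_{0,z}(z,\bs s,t)\to 0$ as $z\to\pm\infty$, for otherwise $\Phi_0$ would grow linearly in $z$, contradicting the boundedness of $\varphi_0^\pm$ on $\Gamma$. Hence $C(\bs s, t)=0$.

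Since the susceptibility satisfies $1+s(\Chi_0)>0$ (a physical requirement already built into the model), the identity $(1+s(\Chi_0))|\Phi_{0,z}|^2=0$ forces $\Phi_{0,z}=0$. Consequently $\Phi_0$ depends only on $(\bs s,t)$, so matching at $z\to +\infty$ and $z\to -\infty$ yields the same limit on both sides of $\Gamma$; this is precisely the continuity statement \eqref{uc:i6}. Finally, continuity of $\varphi_0$ across $\Gamma$ implies continuity of its surface gradient, and the tangential part of $\nabla\varphi_0$ equals $\nabla\varphi_0-(\nabla\varphi_0\cdot\bnu)\bnu$, which gives \eqref{uc:i6b}. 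The only nontrivial ingredient here is the appeal to the matching condition guaranteeing that $\Phi_{0,z}$ vanishes at $\pm\infty$; everything else is algebraic.
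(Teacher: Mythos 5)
Your proposal is correct and follows essentially the same route as the paper: both reduce \eqref{eq:inn0:uc} to the statement that $(1+s(\Chi_0))|\Phi_{0,z}|^2$ is constant in $z$, use the matching condition to set that constant to zero, invoke $s\geq 0$ to conclude $\Phi_{0,z}=0$, and obtain \eqref{uc:i6b} by taking the surface gradient of $\jump{\varphi_0}=0$. You merely spell out the matching step (boundedness of the outer field forcing $\Phi_{0,z}\to 0$) that the paper leaves implicit.
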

\begin{proof}
We have 
$(1+s(\Chi_0))|\Phi_{0,z}|^2 = k \in \R.$ Because of the matching condition $k=0$ and therefore $s\geq 0$ implies $\jump{\varphi_0}=0.$ 
Forming the surface gradient of $\jump{\varphi_0}=0$ gives \eqref{uc:i6b}. 
\end{proof}

Next we use the continuity of the mass flux across the interface to remove the normal velocity from the equations.

\begin{remark}\label{rem:normv}
Equation \eqref{eq:inn4:uc} is satisfied if and only if $\bV_0 \cdot \bnu = \frac{j_0}{R_0} + \wnu$
for some $j_0$ independent of $z.$
\end{remark}

The following lemma shows that we have pure phases in the bulk.

\begin{lemma}\label{lem:ChiNull}
Let $\chi_0, \chi_1$ be given as in Definition \ref{def:matchsol:uc}, then
\[ \chi_0 \in \{-1,1\} \quad \text{and} \quad \chi_1=0.\]
Furthermore, 
 all solutions 
$\Psi\in C^2(\mathbb R)$ of the ordinary differential
   equation
\begin{equation}\label{FNull}
 W'(\Psi) - \gamma \partial_{zz} \Psi =0
\end{equation}
 with 
$\partial_{z} \Psi \rightarrow 0, \Psi \to \pm 1$ as $z\rightarrow\pm\infty$ 
are given 
 by the one parameter family
\begin{equation}\label{eq:param} \Psi(z)= \bar \Psi ( z - \bar z ), \quad \bar z \in \mathbb{R},
\end{equation}
where $\bar \Psi$ is the unique monotonically increasing solution of
\eqref{FNull} satisfying $\bar \Psi(0)=0.$  In particular, all $\Chi_0$ as in 
Definition \ref{def:matchsol:uc}
are given
 by the one parameter family
\[ \Chi_0(t,\bolds,\cdot)= \bar \Psi (\cdot - \bar z(t,\bolds)), \quad \bar z \in \mathbb{R}.
\]
\end{lemma}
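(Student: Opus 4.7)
The statement splits naturally into three parts: the bulk structure $\chi_0 \in \{-1,1\}$, the triviality of the first corrector $\chi_1 = 0$, and the classification of inner profiles $\Chi_0$. I would handle the inner ODE first because its analysis feeds back into the matching argument used to pin down the bulk.

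First, the inner classification. Equation \eqref{eq:inn2:uc} is the autonomous second-order ODE $\gamma \Psi_{zz} = W'(\Psi)$. I would multiply by $\Psi_z$ and integrate once to obtain the energy identity
\begin{equation*}
\tfrac{\gamma}{2}\Psi_z^2 \;=\; W(\Psi) + C,
\end{equation*}
and then use the matching assumption $\Psi_z \to 0$ and $\Psi \to \pm 1$ (where $W$ vanishes) as $z \to \pm\infty$ to fix $C=0$. Since $W \ge 0$ on $[-1,1]$ and $W^{-1/2}$ is integrable away from the zeros of $W$, separation of variables yields a monotone heteroclinic connecting $-1$ and $+1$; normalizing $\bar\Psi(0)=0$ selects a unique such profile, and autonomy of the ODE implies that every other solution is a translate $\bar\Psi(\cdot - \bar z)$. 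The requirement $\Chi_0 \not\equiv 0$ from Definition~\ref{def:innersol:uc} rules out the trivial constant solutions $\Psi \equiv \pm 1$ (and also $\Psi\equiv 0$, which in any case is incompatible with $\Psi_z\to 0$ and $W'(0)=0$ only as an unstable critical point).

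For the bulk part, \eqref{eq:out3:uc} gives $W'(\chi_0)=0$, so $\chi_0 \in \{-1,0,1\}$ pointwise. The value $\chi_0=0$ is excluded by matching: on each side of the zeroth-order interface, the matching conditions force $\chi_0^\pm = \lim_{z\to\pm\infty}\Chi_0(z)$, and by the previous paragraph these limits equal $\pm 1$. Hence $\chi_0 = \pm 1$ in $\Omega^\pm$. To obtain $\chi_1=0$ I would evaluate \eqref{eq:out6:uc} at $\chi_0=\pm 1$: by construction $h'(\pm 1)=0$, hence $s'(\pm 1)=0$ and $\partial_\chi(\rho f)(\cdot,\pm 1)=0$, so \eqref{eq:out6:uc} collapses to $W''(\pm 1)\chi_1 = 0$. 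Since $W''(\pm 1)=8\neq 0$, this yields $\chi_1=0$ in each bulk region.

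The main obstacle, and the only step that is not purely computational, is justifying that the bulk limits of $\Chi_0$ can only take the values $\pm 1$ (not $0$). The $0$-case is a critical point of $W$, but it is a local \emph{maximum}, so any nonconstant solution of $\gamma\Psi_{zz}=W'(\Psi)$ with $\Psi_z\to 0$ at $\pm\infty$ whose limits lie in $\{-1,0,1\}$ must connect the two minima $\pm 1$; a connection involving $0$ would require $W(0)+C=0$ with $C=0$, but $W(0)=1\neq 0$, giving the contradiction cleanly via the energy identity above. This remark is what closes the loop between inner and outer and deserves an explicit sentence in the write-up.
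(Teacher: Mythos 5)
Your proof is correct and follows essentially the same route as the paper: deduce $\chi_0\in\{0,\pm1\}$ from $W'(\chi_0)=0$, classify the inner profiles of the ODE $\gamma\Psi_{zz}=W'(\Psi)$ to force $\chi_0^\pm=\pm1$ via matching, and then use $h'(\pm1)=0$ together with $W''(\pm1)\neq 0$ in \eqref{eq:out6:uc} to conclude $\chi_1=0$. The only difference is that the paper delegates the heteroclinic classification to a cited phase-portrait analysis, whereas you carry out the equivalent first-integral argument explicitly (your aside that $\Chi_0\not\equiv 0$ excludes the constants $\pm1$ is a slight misattribution --- it is the prescribed limits $\Psi\to\pm1$ as $z\to\pm\infty$ that do so --- but this does not affect the argument).
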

\begin{proof}
From \eqref{eq:out4:uc} we know $\chi_0 \in \{\pm1, 0\}$. Thus, by continuity, $\chi_0$ is constant in $\Omega^\pm.$
A phase portrait analysis which can be found in \cite{BDDJ07} shows that \eqref{FNull} implies
\eqref{eq:param} and $\chi_0^\pm= \pm 1.$
 Hence, $\chi_0 = \pm 1 $ in $\Omega^\pm$  and 
$\frac{\partial \rho f}{\partial \chi}(\rho_{1,0},\dots, \rho_{N,0}, \chi_0) =0=s'(\chi_0)$ because of \eqref{def:interpolation}. Thus,
 $\chi_1=0$ because of  $W''(\pm1) \not=0$ and equation \eqref{eq:out6:uc}. 
\end{proof}

Now we reformulate some of the equations \eqref{eq:inn1:uc}-\eqref{eq:inn3:uc} so that we obtain a system from which we can compute the $R_{\alpha,0}$ independently of $\Phi_1,\Chi_1.$

\begin{lemma}\label{lem:equivalent}
For $\Chi_0$ given as in Lemma \ref{lem:ChiNull} equations \eqref{eq:inn1:uc}, \eqref{eq:inn5:uc}, \eqref{eq:inn6:uc}, \eqref{eq:inn7:uc}
are equivalent to
\eqref{eq:inn1:uc}, \eqref{eq:inn5:uc},  \eqref{eq:inn7:uc} and
\begin{equation}\label{eq:inn8:uc}
\frac{j_0}{R_0} \left( \frac{j_0}{R_0}\right)_z  + (\Mu_{\alpha,0})_z = -\frac{j_0}{R_0\tau} (\Chi_{0,z})^2
\end{equation}
for any $\alpha =1,\dots, N$, where $R_0:= \sum_{\alpha=1}^N R_{\alpha,0}.$
\end{lemma}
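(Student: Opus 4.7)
The plan is to observe that \eqref{eq:inn1:uc}, \eqref{eq:inn5:uc}, and \eqref{eq:inn7:uc} appear on both sides of the claimed equivalence, so what I actually need to establish is that, given these three identities together with the background equations \eqref{eq:inn2:uc}, \eqref{eq:inn4:uc} and the matching conditions, the phase-field equation \eqref{eq:inn6:uc} holds if and only if \eqref{eq:inn8:uc} does. My strategy is to project \eqref{eq:inn5:uc} onto the interface normal $\bnu$ and to manipulate the resulting scalar ODE together with $\Chi_{0,z}$ times \eqref{eq:inn6:uc}; this combination is precisely what produces \eqref{eq:inn8:uc}.

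For the forward direction, I first take $\bnu \cdot \eqref{eq:inn5:uc}$: the tangential blocks drop (the purely mechanical piece $-\nabla_\Gamma W(\Chi_0) + \gamma \Chi_{0,zz}\nabla_\Gamma \Chi_0$ vanishes by \eqref{eq:inn2:uc} and the electrostatic tangential piece by \eqref{eq:inn7:uc}), and I replace $\bV_{0,z} \cdot \bnu$ by $(j_0/R_0)_z$ via Remark \ref{rem:normv}. The pressure derivative $(\sum_\alpha R_{\alpha,0}\Mu_{\alpha,0} - RF_0)_z$ is then rewritten through the Gibbs--Duhem identity $\Mu_{\alpha,0} = \partial RF_0/\partial R_{\alpha,0}$, and $(W'(\Chi_0)\Chi_1)_z$ is split using \eqref{eq:inn2:uc}. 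Adding $\Chi_{0,z}$ times \eqref{eq:inn6:uc} then cancels all $\Chi_1$-dependent terms, the $\gamma\kappa\Chi_{0,z}^2$ contribution, the $\partial RF_0/\partial \chi$ piece, and (using \eqref{eq:inn7:uc} once more) the remaining $\veps_0$ contributions, leaving
\[
0 = j_0 (j_0/R_0)_z + \sum_{\alpha=1}^{N} R_{\alpha,0}(\Mu_{\alpha,0})_z + \frac{j_0}{\tau}(\Chi_{0,z})^2.
\]
Integrating \eqref{eq:inn1:uc} once and invoking the matching conditions at $z \to \pm\infty$ together with positive definiteness of $(M_{\alpha\beta})$ forces $(\Mu_{\alpha,0})_z$ to be the same for every $\alpha$, so $\sum_\alpha R_{\alpha,0}(\Mu_{\alpha,0})_z = R_0 (\Mu_{\alpha,0})_z$. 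Division by $R_0$ delivers \eqref{eq:inn8:uc}.

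For the converse I run the identical reduction of $\bnu \cdot \eqref{eq:inn5:uc}$, but now subtract $R_0$ times \eqref{eq:inn8:uc} (the mobility argument of the previous paragraph again justifying replacement of $\sum_\alpha R_{\alpha,0}(\Mu_{\alpha,0})_z$ by $R_0 (\Mu_{\alpha,0})_z$). The residue is exactly $\Chi_{0,z}$ times the left-hand side of \eqref{eq:inn6:uc}. By Lemma \ref{lem:ChiNull} the profile $\Chi_0 = \bar\Psi(\,\cdot - \bar z)$ is strictly monotone on $\R$, so $\Chi_{0,z} > 0$ pointwise, and I may divide to recover \eqref{eq:inn6:uc}.

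The main obstacle is the algebraic bookkeeping in the normal projection: checking that the various $\Chi_1$-contributions coming from $(\sum_\alpha R_{\alpha,0}\Mu_{\alpha,0} - RF_0)_z$ and from $(W'(\Chi_0)\Chi_1)_z$ pair exactly with their counterparts in $\Chi_{0,z}\cdot\eqref{eq:inn6:uc}$, and that the electrostatic $\veps_0$-terms reassemble as a perfect multiple of $\Chi_{0,z}$. Once this cancellation is verified, both directions are immediate, the forward one by dividing the combined identity by $R_0$ and the backward one by dividing the residue by $\Chi_{0,z}$.
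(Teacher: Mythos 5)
Your proposal is correct and follows essentially the same route as the paper: project \eqref{eq:inn5:uc} onto $\bnu$, combine with $\Chi_{0,z}$ times \eqref{eq:inn6:uc}, cancel the $\Chi_1$- and $\veps_0$-terms via \eqref{eq:inn2:uc} and \eqref{eq:inn7:uc}, use the Gibbs--Duhem identity to reduce the pressure derivative to $\sum_\alpha R_{\alpha,0}(\Mu_{\alpha,0})_z$, and then invoke \eqref{eq:inn1:uc} with the matching conditions and positivity of $R_0$ to obtain \eqref{eq:inn8:uc}. Your explicit treatment of the converse via strict monotonicity of $\Chi_0$ is a welcome elaboration of what the paper leaves implicit in its chain of equivalences.
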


\begin{proof}
Combining \eqref{eq:inn6:uc} and the normal part of \eqref{eq:inn5:uc} we get
\begin{equation}\label{eq:inn8b}
\begin{split}
 & j_0 (\bV_0\cdot \bnu)_{z} +  (\alphasum R_{\alpha,0} \Mu_{\alpha,0} - RF_0 - W'(\Chi_0)\Chi_1)_z + \gamma \Chi_{0,zz} \Chi_{1,z} \\ 
 & \quad + \frac{\veps_0}{2} \left( (\Phi_{1,z})^2 + |\nabla_\Gamma \Phi_0|^2 \right) (1+s(\Chi_0))_z 
  - \veps_0 ((1+s(\Chi_0))\Phi_{1,z})_z \Phi_{1,z}   
\\
&= - \gamma (\Chi_{0,z} \Chi_{1,zz}   - \kappa \Chi_{0,z}^2)\\
&= - \frac{j_0}{\tau} (\Chi_{0,z})^2 - W''(\Chi_0)\Chi_{0,z} \Chi_1 - \frac{\partial RF_0}{\partial \chi} \Chi_{0,z}
+ \frac{\veps_0}{2} s'(\Chi_0) X_{0,z} \left( (\Phi_{1,z})^2 + |\nabla_\Gamma \Phi_0|^2 \right).
\end{split}
\end{equation}
Because of \eqref{eq:inn7:uc} equation \eqref{eq:inn8b} implies
\begin{multline}
  j_0 (\bV_0\cdot \bnu)_{z} +  \bigg(\alphasum R_{\alpha,0} \Mu_{\alpha,0} - RF_0 - W'(\Chi_0)\Chi_1\bigg)_z + \gamma \Chi_{0,zz} \Chi_{1,z} 
\\
= - \frac{j_0}{\tau} (\Chi_{0,z})^2 - W''(\Chi_0)\Chi_{0,z} \Chi_1 - \frac{\partial RF_0}{\partial \chi} \Chi_{0,z}.
\end{multline}
This can be equivalently phrased as 
\begin{equation}
  j_0 (\bV_0\cdot \bnu)_{z} +  \bigg(\alphasum R_{\alpha,0} \Mu_{\alpha,0} - RF_0 \bigg)_z  
= - \frac{j_0}{\tau} (\Chi_{0,z})^2  - \frac{\partial RF_0}{\partial \chi} \Chi_{0,z}
\end{equation}
making use of \eqref{eq:inn2:uc}. Applying the definition of $\mu_\alpha$, this is equivalent to
\begin{equation}
  j_0 (\bV_0\cdot \bnu)_{z} +  \alphasum R_{\alpha,0} (\Mu_{\alpha,0} )_z  
= - \frac{j_0}{\tau} (\Chi_{0,z})^2.
\end{equation}
Due to \eqref{eq:inn1:uc} and the positivity of $R_0$ this implies \eqref{eq:inn8:uc}.
\end{proof}

Note that \eqref{eq:inn1:uc} and \eqref{eq:inn8:uc} form  a system of $N$ equations in which only the $R_{\alpha,0}$ are unknown, as we already know $\Chi_0$
up to a translational constant.
Thus, we may determine $(R_{\alpha,0})_\alpha$ up to the translational constant.
As the equations do not contain $\Phi_0,\Phi_1$ we can use a result from \cite{DGK14}.

\begin{lemma}\label{lem:rhoalpha}
 Let $\rho_1^-,\dots, \rho_N^->0$ be given. Let  $j_0\in \R$ with $|j_0|$ small enough,  
 $\rho^- := \sum_\alpha \rho_\alpha^-$ and $\mu_\alpha^- := \mu_\alpha(\rho_1^-,\dots,\rho_N^-,-1)$.
Then, there exist $R_{1,0},\dots,R_{N,0} \in C^0(\R,\R_+)$ and $\rho_1^+,\dots,\rho_N^+>0$ so that \eqref{uc:i1}, \eqref{uc:i4} and 
\begin{align}
0&=\mu_\alpha(R_{1,0}(z),\dots,R_{N,0}(z),\Chi_0(z)) - \mu_N(R_{1,0}(z),\dots,R_{N,0}(z),\Chi_0(z)) - \mu_\alpha^- + \mu_N^- \label{eq:ra1},\\
0&=\mu_N(R_{1,0}(z),\dots,R_{N,0}(z),\Chi_0(z)) + \frac{j_0^2}{2\sum_\alpha R^2_{\alpha,0}(z)} 
+ \frac{j_0}{\tau} \int_{-\infty}^z \frac{(\Chi_{0,z}(\tilde z))^2}{\sum_\alpha R_{\alpha,0}(\tilde z)} \operatorname{d}\tilde z \label{eq:ra2},\\
0&=\lim_{z \rightarrow \pm \infty}R_{\alpha,0}(z)-  \rho_\alpha^\pm \label{eq:ra3}
\end{align}
 are satisfied, where \eqref{eq:ra1}, \eqref{eq:ra3} are valid for $\alpha =1,\dots,N$.
In particular, the $R_{1,0},\dots,R_{N,0}$ solve 
\eqref{eq:inn1:uc} and \eqref{eq:inn8:uc}.
\end{lemma}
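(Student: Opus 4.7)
I would treat \eqref{eq:ra1}--\eqref{eq:ra3} as an algebraic system (made mildly nonlocal by the integral in \eqref{eq:ra2}) that determines $(R_{\alpha,0})_\alpha$ pointwise in $z$, construct solutions via the implicit function theorem around the explicit case $j_0=0$, and verify a posteriori that the resulting profile satisfies the differential equations \eqref{eq:inn1:uc}, \eqref{eq:inn8:uc} and the jump relations \eqref{uc:i1}, \eqref{uc:i4}. Since \eqref{eq:ra1}--\eqref{eq:ra2} involve neither $\Phi_0$ nor $\Phi_1$, the electrostatic part of the model decouples entirely and the construction reduces to the one carried out in \cite{DGK14}, which may be invoked at the end.

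\textbf{Construction.} For $j_0=0$ the system \eqref{eq:ra1}--\eqref{eq:ra2} collapses, for each $z\in\R$, to the purely algebraic condition
\begin{equation*}
\mu_\alpha(R_{1,0}(z),\dots,R_{N,0}(z),\Chi_0(z)) \;=\; \mu_\alpha^-, \qquad \alpha=1,\dots,N.
\end{equation*}
At $z=-\infty$ (where $\Chi_0=-1$ by Lemma~\ref{lem:ChiNull}) the tuple $(\rho_\alpha^-)_\alpha$ is a solution, and strict convexity of $\rho\psi_{\mathrm{L/V}}$ in the densities (a consequence of the mixing-entropy term in the ansatz of Section~\ref{sec:coe}) makes the Jacobian $(\partial\mu_\alpha/\partial\rho_\beta)$ positive definite, hence invertible. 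Since $\Chi_0$ depends monotonically on $z$ with range $(-1,1)$ and the inverse Jacobian stays bounded along the connected branch, the implicit function theorem extends this local solution globally in $z$ and defines $\rho_\alpha^+:=\lim_{z\to+\infty}R_{\alpha,0}(z)$, characterized by $\mu_\alpha(\rho_1^+,\dots,\rho_N^+,+1)=\mu_\alpha^-$. For $|j_0|>0$ small I would run the implicit function theorem in a weighted space of continuous $\R_+^N$-valued profiles on $\R$, chosen so that the nonlocal integral $\int_{-\infty}^z (\Chi_{0,z})^2/\sum_\alpha R_{\alpha,0}\,\operatorname{d}\tilde z$ is well defined and continuously Fr\'echet-differentiable in the profile; the linearization at $j_0=0$ is exactly pointwise multiplication by the invertible Jacobian above and is therefore an isomorphism.

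\textbf{Verification and main obstacle.} Once $(R_{\alpha,0})_\alpha$ is obtained, differentiating \eqref{eq:ra1} in $z$ yields $(\Mu_{\alpha,0}-\Mu_{N,0})_z=0$, which gives \eqref{eq:inn1:uc}; differentiating \eqref{eq:ra2} in $z$ then yields \eqref{eq:inn8:uc} for $\alpha=N$, and combining with the previous identity extends it to all $\alpha$. Sending $z\to+\infty$ in \eqref{eq:ra1} and \eqref{eq:ra2} (using the matching conditions) reproduces the jump conditions \eqref{uc:i1} and \eqref{uc:i4}. The main obstacle is the perturbative step: one must choose the function space and the smallness threshold for $|j_0|$ so that the nonlocal integral makes sense, positivity $R_{\alpha,0}(z)>0$ is preserved uniformly in $z\in\R$, and the correct asymptotes $R_{\alpha,0}(\pm\infty)=\rho_\alpha^\pm$ survive the perturbation---all while staying in the invertibility domain of $(\partial\mu_\alpha/\partial\rho_\beta)$.
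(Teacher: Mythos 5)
The paper itself gives no proof of Lemma~\ref{lem:rhoalpha}: it observes, in the sentence immediately preceding the statement, that the system \eqref{eq:inn1:uc}, \eqref{eq:inn8:uc} contains neither $\Phi_0$ nor $\Phi_1$, and then imports the existence result wholesale from \cite{DGK14}. Your opening observation --- that the electrostatic part decouples and the construction reduces to the one in \cite{DGK14} --- is therefore exactly the paper's argument, and your verification step (differentiating \eqref{eq:ra1} to get \eqref{eq:inn1:uc}, differentiating \eqref{eq:ra2} to get \eqref{eq:inn8:uc} for $\alpha=N$ and hence for all $\alpha$, and letting $z\to\pm\infty$ to recover \eqref{uc:i1} and \eqref{uc:i4}) is correct, modulo reading $\sum_\alpha R_{\alpha,0}^2$ in \eqref{eq:ra2} as $R_0^2=(\sum_\alpha R_{\alpha,0})^2$, which is what \eqref{uc:i4} and \eqref{eq:inn8:uc} require.

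Two points in your reconstruction of the \cite{DGK14} argument deserve to be named as genuine gaps rather than technicalities. First, the global continuation: local invertibility of $(\partial\mu_\alpha/\partial\rho_\beta)$ (which the paper records in Lemma~\ref{lem:Stefan} via \cite[Lemma 4.8]{DGK14}) gives you the branch only as long as $(R_{\alpha,0}(z))_\alpha$ stays in a compact subset of $(0,\infty)^N$ on which the inverse Jacobian is uniformly bounded, and the existence of $\rho_\alpha^+$ requires that $(\mu_\alpha^-)_\alpha$ lie in the range of $\mu(\cdot,+1)$. Both are quantitative properties of the specific free energy $\rho\psi_{\textrm{L/V}}$ of Section~\ref{sec:coe} (behaviour of the chemical potentials as $\rho_\alpha\to 0$ and $\rho_\alpha\to\infty$); you assert them (``the inverse Jacobian stays bounded along the connected branch'') but they are the actual mathematical content of the lemma and the place where the smallness of $|j_0|$ enters. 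Second, a consistency issue you silently absorb: letting $z\to-\infty$ in \eqref{eq:ra2} forces $\mu_N^-+j_0^2/(2(\rho^-)^2)=0$, i.e.\ the prescribed data $(\rho_\alpha^-)_\alpha$ and $j_0$ are not independent as the statement suggests; your claim that for $j_0=0$ the system collapses to $\mu_\alpha=\mu_\alpha^-$ already presumes the normalization $\mu_N^-=0$. Either the reference value must be subtracted in \eqref{eq:ra2} (so that only the jump condition \eqref{uc:i4} is genuinely imposed) or this compatibility must be stated as a hypothesis; a careful proof has to make that choice explicit.
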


Thus, only the solvability criteria for $\Phi_1, \, \Chi_1,\, (R_{\alpha,1})_\alpha$ and the tangential part of $\bV_0$ are left to be determined.

\begin{lemma}\label{lem:phi1}
For $\Chi_0$ as in Lemma \ref{lem:ChiNull}, the function $\Phi_1$ from  Definition \ref{def:matchsol:uc} satisfies
\begin{equation}\label{eq:phi1} \Phi_{1,z} = \frac{k}{1+s(X_0)}\end{equation}
for some $k \in \R.$ Such an $k$ can be found if and only if
\eqref{uc:i5} holds.
\end{lemma}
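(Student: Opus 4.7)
The plan is to integrate the inner equation \eqref{eq:inn7:uc} once in $z$, then use the standard matching conditions of matched asymptotic expansions to connect the integration constant to the outer normal derivatives of $\varphi_0$ on both sides of the interface.

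First I would observe that \eqref{eq:inn7:uc} reads $\bigl((1+s(\Chi_0))\Phi_{1,z}\bigr)_z = 0$, so a single integration in $z$ yields $(1+s(\Chi_0))\Phi_{1,z} = k$ for some constant $k \in \mathbb{R}$ (independent of $z$, but possibly depending on $t$ and the tangential surface coordinates). Since $s_L, s_V \geq 0$ and hence $1+s(\Chi_0) > 0$, one may solve for $\Phi_{1,z}$ to obtain the desired form \eqref{eq:phi1}.

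For the second half, I would recall that the standard matching conditions (cf.\ \cite{Lag,CaFi}) require
\begin{equation*}
\lim_{z\to\pm\infty}\Phi_{1,z}(t,\bolds,z) = \nabla\varphi_0^\pm(t,\bolds)\cdot\bnu,
\end{equation*}
where the superscripts $\pm$ denote traces from $\Omega^\pm$. By Lemma \ref{lem:ChiNull}, $\Chi_0(z)\to\pm 1$ as $z\to\pm\infty$, and the definition of $h$ gives $h(\pm 1)\in\{0,1\}$, so that $s(\Chi_0)\to s_L$ as $z\to +\infty$ and $s(\Chi_0)\to s_V$ as $z\to -\infty$. Passing to the limit in $(1+s(\Chi_0))\Phi_{1,z}=k$ from both sides therefore yields
\begin{equation*}
(1+s_L)\,\nabla\varphi_0^+\cdot\bnu \;=\; k \;=\; (1+s_V)\,\nabla\varphi_0^-\cdot\bnu,
\end{equation*}
which is precisely the statement that $\jump{(1+s(\chi_0))\nabla\varphi_0\cdot\bnu}=0$. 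Conversely, if \eqref{uc:i5} holds, then setting $k:=(1+s_L)\nabla\varphi_0^+\cdot\bnu = (1+s_V)\nabla\varphi_0^-\cdot\bnu$ defines a $z$-independent constant that is consistent with both matching conditions, and \eqref{eq:phi1} delivers an admissible $\Phi_1$.

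I do not anticipate a serious obstacle here: the equation \eqref{eq:inn7:uc} is a linear second-order ODE in $z$ that integrates in closed form, and the rest is a bookkeeping exercise in matching. The only subtlety worth stating explicitly is that the two one-sided limits of $(1+s(\Chi_0))\Phi_{1,z}$ must coincide precisely because $\Phi_{1,z}$ is forced to be a constant multiple of $1/(1+s(\Chi_0))$; this is what converts the solvability condition for the inner problem into the interfacial jump condition \eqref{uc:i5}.
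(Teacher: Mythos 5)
Your proof is correct and follows essentially the same route as the paper's (very terse) argument: integrate \eqref{eq:inn7:uc} once, use $1+s(\Chi_0)>0$ to solve for $\Phi_{1,z}$, and identify the constant $k$ with the one-sided limits $(1+s_{L/V})\nabla\varphi_0^\pm\cdot\bnu$ via the standard matching condition, which turns the existence of a single $z$-independent $k$ into the jump condition \eqref{uc:i5}. Your version merely spells out the matching step that the paper leaves implicit.
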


\begin{proof}
As $s \geq 0 $ the equivalence of \eqref{eq:inn7:uc} to \eqref{eq:phi1} is clear.
The equivalence to the interface condition \eqref{uc:i5} follows from the matching conditions.
\end{proof}

\begin{lemma}\label{lem:linear}
The normal part of \eqref{eq:inn5:uc} can be written as 
\begin{multline}\label{eq:linear} \mathcal{L} \Chi_1  = \left(\frac{j_0^2}{\alphasum R_{\alpha,0}}\right)_z  
+ (\alphasum R_{\alpha,0} \Mu_{\alpha,0} - RF_0)_z  - \gamma  \kappa \Chi_{0,z}^2 \\- \frac{\veps_0}{2} ((1+s(\Chi_0))(\Phi_{1,z})^2)_z 
+ \frac{\veps_0}{2} |\nabla_\Gamma \Phi_0|^2 (1+s(\Chi_0))_z\end{multline}
with
\[ \mathcal{L} : W^{2,1}(\R) \rightarrow L^1(\R), \quad \Psi \mapsto ( W'(\Chi_0) \Psi - \gamma \Chi_{0,z} \Psi_z)_z.\]
Equation \eqref{eq:linear} has a solution if and only if the normal part of
\eqref{uc:i3} is true.
\end{lemma}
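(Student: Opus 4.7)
The plan is to verify the reformulation by direct computation and then use the fact that $\mathcal{L}$ is a total $z$-derivative to extract the solvability condition via integration against $1$.

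First I would rewrite $\mathcal{L}\Psi=(W'(\Chi_0)\Psi-\gamma\Chi_{0,z}\Psi_z)_z=(W'(\Chi_0)\Psi)_z-\gamma(\Chi_{0,z}\Psi_z)_z$ so that, when acting on $\Chi_1$, it equals $(W'(\Chi_0)\Chi_1)_z-\gamma\Chi_{0,z}\Chi_{1,zz}-\gamma\Chi_{0,zz}\Chi_{1,z}$. Comparing with the normal projection of \eqref{eq:inn5:uc}, the $\Chi_1$-dependent terms there sum to exactly $-\mathcal{L}\Chi_1$ (no use of \eqref{eq:inn2:uc} is even needed for this particular collection of terms). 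Next I rewrite the velocity contribution using Remark~\ref{rem:normv}: since $\bV_0\cdot\bnu=j_0/R_0+\wnu$ with $j_0,\wnu$ independent of $z$, one has $j_0\,(\bV_0\cdot\bnu)_z=j_0(j_0/R_0)_z=(j_0^2/R_0)_z$. Finally, the electric contribution in \eqref{eq:inn5:uc} is transformed by the Leibniz identity
\begin{equation*}
\tfrac{\veps_0}{2}(\Phi_{1,z})^2(1+s(\Chi_0))_z-\veps_0\big((1+s(\Chi_0))\Phi_{1,z}\big)_z\Phi_{1,z}=-\tfrac{\veps_0}{2}\big((1+s(\Chi_0))(\Phi_{1,z})^2\big)_z,
\end{equation*}
while the tangential term $\tfrac{\veps_0}{2}|\nabla_\Gamma\Phi_0|^2(1+s(\Chi_0))_z$ passes through unchanged. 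Collecting these pieces yields the claimed form \eqref{eq:linear}.

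For the solvability statement, I would integrate both sides of \eqref{eq:linear} over $z\in\mathbb{R}$. Since $\mathcal{L}\Chi_1$ is a total $z$-derivative, the left-hand side equals $[W'(\Chi_0)\Chi_1-\gamma\Chi_{0,z}\Chi_{1,z}]_{-\infty}^{+\infty}$, which vanishes by the standard matching conditions: Lemma~\ref{lem:ChiNull} gives $\chi_1=0$ and $\chi_0=\pm1$ in the bulk, so $W'(\Chi_0)\to W'(\pm1)=0$, $\Chi_{0,z}\to0$, and $\Chi_1,\Chi_{1,z}$ remain bounded. Hence $\mathcal{L}\Chi_1=\mathrm{RHS}$ has a solution (unique up to the kernel of $\mathcal{L}$) if and only if $\int_{-\infty}^{+\infty}\mathrm{RHS}\,\dz=0$.

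To identify this integral with the normal part of \eqref{uc:i3}, I would integrate each $(\cdot)_z$-term and read off boundary values via matching: $R_{\alpha,0}\to\rho_{\alpha,0}^\pm$, $\Mu_{\alpha,0}\to\mu_{\alpha,0}^\pm$, $RF_0\to\rho f_0^\pm$, and $(1+s(\Chi_0))\Phi_{1,z}^2\to(1+s(\chi_0))(\nabla\varphi_0\cdot\bnu)^2$ at $\pm\infty$. This produces
\begin{equation*}
\jumpp{\tfrac{j_0^2}{\rho_0}+\alphasum\rho_{\alpha,0}\mu_{\alpha,0}-\rho f_0-\tfrac{\veps_0}{2}(1+s(\chi_0))(\nabla\varphi_0\cdot\bnu)^2}+\tfrac{\veps_0}{2}|\nabla_\Gamma\varphi_0|^2\jump{s(\chi_0)}-\gamma\kappa\!\int_{-\infty}^{+\infty}\!\Chi_{0,z}^2\,\dz=0,
\end{equation*}
where I used the continuity \eqref{uc:i6b} of $\nabla_\Gamma\varphi_0$ to pull it outside the jump. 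On the other hand, projecting \eqref{uc:i3} onto $\bnu$ and using $[[j_0\bv_0\cdot\bnu]]=[[j_0^2/\rho_0]]$ (as $j_0$ is continuous) together with the orthogonal decomposition $|\nabla\varphi_0|^2=(\nabla\varphi_0\cdot\bnu)^2+|\nabla_\Gamma\varphi_0|^2$ gives exactly the same identity. The main piece of bookkeeping is the electric stress splitting and the use of \eqref{uc:i6b}; everything else is a clean application of matching.
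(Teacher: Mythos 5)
Your reformulation of the normal part of \eqref{eq:inn5:uc} into \eqref{eq:linear} is correct and fills in what the paper dismisses as ``straightforward'': the grouping of the $\Chi_1$-terms into $-\mathcal{L}\Chi_1$, the identity $j_0(\bV_0\cdot\bnu)_z=(j_0^2/R_0)_z$ from Remark \ref{rem:normv}, and the Leibniz collapse of the two electric terms into $-\tfrac{\veps_0}{2}\big((1+s(\Chi_0))(\Phi_{1,z})^2\big)_z$ all check out, as does the final identification of $\int_{\R}\mathrm{RHS}\,\dz$ with the normal part of \eqref{uc:i3} via the matching conditions, the decomposition $|\nabla\varphi_0|^2=(\nabla\varphi_0\cdot\bnu)^2+|\nabla_\Gamma\varphi_0|^2$ and the continuity \eqref{uc:i6b}.

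The gap is in the solvability statement. Your integration argument --- that $\mathcal{L}\Chi_1$ is an exact $z$-derivative whose antiderivative vanishes at $\pm\infty$ by matching --- proves only the \emph{necessity} of $\int_{\R}\mathrm{RHS}\,\dz=0$; it says nothing about \emph{sufficiency}, which is the substantive half of the ``if and only if'' (it is what guarantees that a $\Chi_1$ actually exists at this order of the expansion). The paper obtains both directions from the Fredholm alternative: solvability of $\mathcal{L}\Chi_1=\mathrm{RHS}$ in $W^{2,1}(\R)$ is equivalent to orthogonality of the right-hand side to the kernel of the adjoint in $L^\infty(\R)$, and the key input --- cited from \cite[Lemma 4.11]{DGK14} --- is that the homogeneous adjoint problem $W'(\Chi_0)\Xi_z+\gamma(\Chi_{0,z}\Xi_z)_z=0$ admits only constants as bounded solutions, so the single scalar condition $\int_{\R}\mathrm{RHS}\,\dz=0$ exhausts the obstructions. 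Without such a characterization (or an explicit construction of $\Chi_1$, e.g.\ by integrating \eqref{eq:linear} once and solving the resulting first-order ODE, using that $\Chi_{0,z}>0$ and that $\Chi_{0,z}$ itself annihilates $\Psi\mapsto W'(\Chi_0)\Psi-\gamma\Chi_{0,z}\Psi_z$), your ``hence \dots if and only if'' is an assertion rather than a proof; the parenthetical claim of uniqueness up to $\ker\mathcal{L}$ is likewise unsupported. Everything else in your write-up is a correct, and somewhat more explicit, version of the paper's computation.
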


\begin{proof} 
The equivalence of the normal part of \eqref{eq:inn5:uc} and \eqref{eq:linear} is straightforward. Thus, we focus on the solvability condition.
The only solutions of the homogeneous adjoint problem to \eqref{eq:linear}, i.e.,
\begin{equation}\label{eq:adjoint}
W'(\Chi_0) \Xi_z + \gamma (\Chi_{0,z} \Xi_z)_z =0
\end{equation}
in $L^\infty(\R)$ are given by $\Xi(z) =k$ for all $z \in \R $ for some parameter  $k \in \R,$ see \cite[Lemma 4.11]{DGK14}.
Hence, by Fredholm's Theorem, \eqref{eq:linear} has a solution if and only if
 \begin{multline} 0= \int_{-\infty}^\infty\left(\frac{j_0^2}{\alphasum R_{\alpha,0}}\right)_z  
+ (\alphasum R_{\alpha,0} \Mu_{\alpha,0} - RF_0)_z  - \gamma  \kappa \Chi_{0,z}^2 \\- \frac{\veps_0}{2} ((1+s(\Chi_0))(\Phi_{1,z})^2)_z 
+ \frac{\veps_0}{2} |\nabla_\Gamma \Phi_0|^2 (1+s(\Chi_0))_z \dz.
\end{multline}
This is
 \begin{multline}  \bigg[ \! \bigg[ \frac{j_0^2}{\rho_0}  
+ \alphasum \rho_{\alpha,0} \mu_{\alpha,0} - \rho f_0  - \frac{\veps_0}{2}(1+s(\chi_0)) \left( (\nabla \varphi_0 \cdot \bnu)^2
-  (|\nabla \varphi_0|^2-(\nabla \varphi_0 \cdot \bnu)^2)\right)   \bigg] \!
     \bigg] \\
=  \int_{-\infty}^\infty \gamma  \kappa \Chi_{0,z}^2 \dz ,
\end{multline}
which is the normal part of \eqref{uc:i3}.
\end{proof}

Next, we show that the tangential part of \eqref{uc:i3} is a condition for the existence of the tangent part of $\bV_0$.
Let us note that due to \eqref{uc:i5}, \eqref{uc:i6b} the tangential part of \eqref{uc:i3} equals
\begin{equation}\label{uc:i3b}
j_0\jump{\bv_0 - (\bv_0 \cdot \bnu )\bnu}=0.
\end{equation}

\begin{lemma}\label{lem:tanv}
The tangential part of  \eqref{eq:inn5:uc} 
 has a solution if and only if \eqref{uc:i3b} holds.
\end{lemma}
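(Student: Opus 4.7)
The plan is to take the tangential projection (i.e., apply $\mathbb{P}_\tau := \unity - \bnu \otimes \bnu$) of the inner momentum equation \eqref{eq:inn5:uc} and to show that, after using identities already at our disposal, it collapses to an almost trivial ordinary differential equation in $z$ whose matching conditions are exactly \eqref{uc:i3b}.

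First I would apply $\mathbb{P}_\tau$ to \eqref{eq:inn5:uc}. All the terms preceded by $\bnu$ on the right-hand side of \eqref{eq:inn5:uc} drop out, and the $\Phi_{1,z}\bnu$ contribution in the last line drops out too, leaving
\[
0 \;=\; j_0\,(\bV_{0,z})_\tau \;-\; \nabla_\Gamma W(\Chi_0) \;+\; \gamma\,\Chi_{0,zz}\,\nabla_\Gamma \Chi_0 \;-\; \veps_0\,\bigl((1+s(\Chi_0))\Phi_{1,z}\bigr)_z\,\nabla_\Gamma \Phi_0.
\]
Next I would simplify two blocks. For the $W$-block, the chain rule gives $\nabla_\Gamma W(\Chi_0)=W'(\Chi_0)\nabla_\Gamma \Chi_0$, and equation \eqref{eq:inn2:uc} yields $W'(\Chi_0)=\gamma \Chi_{0,zz}$, so the two middle terms cancel identically. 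For the electric block, Lemma \ref{lem:phi1} states that $(1+s(\Chi_0))\Phi_{1,z}$ is independent of $z$, so its $z$-derivative vanishes. The projected equation therefore reduces to
\[
j_0\,(\bV_{0,z})_\tau \;=\; 0.
\]

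Now I would discuss solvability as a function of the constant $j_0$. If $j_0=0$, any $\bV_{0,\tau}\in C^0([0,T_f),C^0(U,C^1(\R^3)))$ solves this equation, and \eqref{uc:i3b} holds trivially since it has $j_0$ as a factor. If $j_0\neq 0$, then $(\bV_{0,\tau})_z \equiv 0$, i.e., $\bV_{0,\tau}$ must be constant in $z$. By the standard matching conditions (Definition~\ref{def:matchsol:uc}) one has $\bV_{0,\tau}(t,\bolds,z)\to \bv_0^\pm(t,\bolds) - (\bv_0^\pm\cdot\bnu)\bnu$ as $z\to\pm\infty$, so a $z$-independent $\bV_{0,\tau}$ exists if and only if these two limits coincide, which is precisely $\jump{\bv_0-(\bv_0\cdot\bnu)\bnu}=0$. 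Multiplying by $j_0$ gives \eqref{uc:i3b}. Conversely, whenever \eqref{uc:i3b} is satisfied, the constant profile $\bV_{0,\tau}(z)\equiv \bv_0^\pm-(\bv_0^\pm\cdot\bnu)\bnu$ is a valid solution.

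There is no genuine analytic obstacle here: the main point to be careful about is bookkeeping the tangential projection of each term in \eqref{eq:inn5:uc} and observing that the two simplifications above (the cancellation via \eqref{eq:inn2:uc} and the constancy from Lemma \ref{lem:phi1}) make the tangential momentum balance degenerate. The only mild subtlety is distinguishing the cases $j_0=0$ and $j_0\neq 0$ in order to phrase the ``if and only if'' correctly, which is why the interface condition is stated in the product form \eqref{uc:i3b} rather than as a bare jump of the tangential velocity.
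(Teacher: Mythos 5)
Your proposal is correct and follows essentially the same route as the paper: project \eqref{eq:inn5:uc} tangentially, cancel the $W$-terms via \eqref{eq:inn2:uc}, kill the electric term via \eqref{eq:inn7:uc} (equivalently Lemma \ref{lem:phi1}), and reduce to $j_0(\bV_0\cdot{\bf t})_z=0$. Your explicit case distinction between $j_0=0$ and $j_0\neq0$ in the final solvability step is a slightly more careful spelling-out of what the paper states in one line, but it is the same argument.
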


\begin{proof}
For any vector ${\bf t}$ tangent to the zeroth order interface $\Gamma$ multiplication of \eqref{eq:inn5:uc} by ${\bf t}$ gives
\begin{equation}
 0 = j_0 (\bV_0\cdot {\bf t})_{z}    - \nabla_\Gamma W(\Chi_0)\cdot {\bf t} + \gamma  \Chi_{0,zz}\nabla_\Gamma( \Chi_0 )\cdot {\bf t}  
  - \veps_0 \left( (1+s(\Chi_0)) \Phi_{1,z}\right)_z  \nabla_\Gamma \Phi_0  \cdot {\bf t}.
\end{equation}
Due to \eqref{eq:inn2:uc} and \eqref{eq:inn7:uc} this is 
\begin{equation}
 0 = j_0 (\bV_0\cdot {\bf t})_{z},
\end{equation}
which can be solved if and only if \eqref{uc:i3b} holds.
\end{proof}

Finally, we study solvability of \eqref{eq:inn3:uc}.

\begin{lemma}\label{lem:Stefan}
 Let $((R_{\alpha,0})_{\alpha}, \bV_0, \Chi_0,\Chi_1, \Phi_1)$ be given. Then, there exist $(R_{\alpha,1})_{\alpha}$
satisfying \eqref{eq:inn3:uc}  if and only if \eqref{uc:i2b} is fulfilled.
\end{lemma}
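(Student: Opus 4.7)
The plan is to reduce the second-order equation \eqref{eq:inn3:uc} to a first-order ODE by integrating once in $z$, and then to read off the compatibility condition from the matching data at $z\to\pm\infty$.

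Since the mean curvature $\kappa$ of the zeroth-order interface is independent of $z$, equation \eqref{eq:inn3:uc} can be written in the form $\partial_z F_\alpha = 0$, where
\[
F_\alpha := R_{\alpha,0}(\bV_0\cdot \bnu - \wnu) - \betasum M_{\alpha\beta}\Big((\Mu_{\beta,1}-\Mu_{N,1})_z - \kappa(\Mu_{\beta,0}-\Mu_{N,0}) + \Big(\tfrac{z_\beta}{m_\beta}-\tfrac{z_N}{m_N}\Big)\Phi_{1,z}\Big).
\]
Hence $F_\alpha$ is a $z$-independent quantity, so a solution $R_{\alpha,1}$ to \eqref{eq:inn3:uc} with the correct matching behaviour at both ends exists if and only if the values $F_\alpha^+$ and $F_\alpha^-$ computed from matching at $z\to+\infty$ and $z\to-\infty$ agree.

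Next, I would invoke the standard first-order matching conditions: $R_{\alpha,0}\to \rho_{\alpha,0}^\pm$, $\bV_0\cdot \bnu\to \bv_0^\pm\cdot\bnu$, $\Mu_{\beta,0}\to\mu_{\beta,0}^\pm$, together with the gradient-matching relations $(\Mu_{\beta,1})_z\to \nabla\mu_{\beta,0}^\pm\cdot\bnu$ and $\Phi_{1,z}\to \nabla\varphi_0^\pm\cdot\bnu$. Substituting these limits yields
\[
F_\alpha^\pm = \rho_{\alpha,0}^\pm(\bv_0^\pm\cdot\bnu-\wnu) - \betasum M_{\alpha\beta}\Big(\nabla(\mu_{\beta,0}^\pm-\mu_{N,0}^\pm)\cdot\bnu - \kappa(\mu_{\beta,0}^\pm-\mu_{N,0}^\pm) + \Big(\tfrac{z_\beta}{m_\beta}-\tfrac{z_N}{m_N}\Big)\nabla\varphi_0^\pm\cdot\bnu\Big).
\]
The continuity of chemical potential differences established in \eqref{uc:i1} ensures $\jump{\mu_{\beta,0}-\mu_{N,0}}=0$, so the curvature contributions drop out of the difference $F_\alpha^+-F_\alpha^-$, and $F_\alpha^+=F_\alpha^-$ reduces to precisely \eqref{uc:i2b}. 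This establishes the necessity direction.

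For sufficiency, assume \eqref{uc:i2b} holds, so $F_\alpha^+=F_\alpha^-$ and we may require $F_\alpha \equiv F_\alpha^-$ as a first-order ODE in $z$. Unfolding $\Mu_{\beta,1}$ through the definition in \eqref{eq:abbrev} and using invertibility of the matrix $(\partial\mu_\beta/\partial\rho_\alpha)$ (which holds by strict convexity of $\rho\psi_{\rm L/V}$ and is implicit already in Lemma \ref{lem:rhoalpha}), this equation becomes a linear ODE system for $(R_{\alpha,1})_\alpha$ with bounded, sufficiently regular coefficients and inhomogeneities determined by the already-known quantities $\Chi_1$, $\Phi_1$ and $R_{\alpha,0}$; standard ODE theory then produces a solution, and the matching at both $\pm\infty$ is automatic from $F_\alpha$ being the common limit value. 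The only delicate point is precisely this simultaneous matching at both infinities, which is exactly what \eqref{uc:i2b} encodes; once it is granted, everything else is routine.
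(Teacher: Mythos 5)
Your argument is correct and arrives at \eqref{uc:i2b} by an equivalent but more elementary route than the paper's own proof. The paper works with the differences $\psi_\beta=\Mu_{\beta,1}-\Mu_{N,1}$, subtracts auxiliary affine functions $\Xi_\beta$ carrying the matching data so as to place the unknown in $L^1(\R)$, and then applies the Fredholm alternative to the second-order system, computing the kernel of the homogeneous adjoint system $\betasum (M_{\alpha\beta}(Z_\beta)_z)_z=0$ in $L^\infty(\R)$ (the constants) and integrating the resulting orthogonality relation over $\R$. You instead exploit the fact that \eqref{eq:inn3:uc} is a perfect $z$-derivative, so that your first integral $F_\alpha$ is constant in $z$, and \eqref{uc:i2b} is exactly the compatibility of the two limiting values of $F_\alpha$ dictated by the matching conditions; since the adjoint kernel in the paper's argument consists precisely of constants, its Fredholm condition is literally $\int_\R \partial_z F_\alpha \,\dz =0$, i.e.\ the same statement. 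What your version buys is brevity and the absence of functional-analytic scaffolding; what the paper's buys is a single solvability template reused verbatim for the other inner equations (e.g.\ Lemma \ref{lem:linear}). Two points you pass over quickly but which are harmless: first, the term $\kappa(\Mu_{\beta,0}-\Mu_{N,0})_z$ in \eqref{eq:inn3:uc} in fact vanishes identically by \eqref{eq:inn1:uc} and matching, so your cancellation of its jump via \eqref{uc:i1} carries the same information; second, \eqref{eq:inn3:uc} only determines the $N-1$ differences $\Mu_{\beta,1}-\Mu_{N,1}$, so producing the $N$ functions $(R_{\alpha,1})_\alpha$ requires fixing one of the $\Mu_{\alpha,1}$ freely before inverting the Jacobian $(\partial\mu_\beta/\partial\rho_\gamma)$ --- exactly the step the paper makes explicit via \eqref{eq:abbrev} and its citation of the diffeomorphism property.
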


\begin{proof}
 According to  \cite[Lemma 4.8]{DGK14}  the map
\[ (0,\infty)^N \rightarrow \R^N, \qquad 
 (\rho_\alpha)_{\alpha=1,...\,,N} \mapsto ( \mu_1((\rho_\alpha)_{\alpha=1,...\,,N},\chi), ...\,, \mu_N((\rho_\alpha)_{\alpha=1,...\,,N},\chi))^T
\]
is  a diffeomorphism for any fixed $\chi \in [-1,1]$
such that for fixed $\chi \in [-1,1]$, the matrix 
\[ \Big(\frac{\del \mu_\beta}{\del \rho_\gamma}((\rho_\alpha)_{\alpha=1,...\,,N},\chi)\Big)_{\beta,\gamma=1,...\,,N}\]
is invertible for any $\rho_1,...\,,\rho_N >0$. 

Thus, instead of studying criteria determining whether there exist functions $(R_{\alpha,1})_\alpha$ solving \eqref{eq:inn3:uc}
we may search for criteria for the existence of functions $\Mu_{\alpha,1}: \R \to \R$, $\alpha=1, ...\,,N$ satisfying \eqref{eq:inn3:uc}.
 Once we have ensured the existence of
the $( \Mu_{\alpha,1})_{\alpha}$ the  corresponding $(R_{\alpha,1})_{\alpha}$ can be computed from  \eqref{eq:abbrev}.
Conversely, if no $( \Mu_{\alpha,1})_{\alpha}$ solving \eqref{eq:inn3:uc} exist, there are no solutions in terms of $(R_{\alpha,1})_{\alpha}.$
From  \eqref{eq:inn1:uc} and the matching conditions we know  
\[   (\Mu_{\beta,0} -\Mu_{N,0})_z = 0 \quad \text{for all } \beta=1,...\,,N.\]
Thus, \eqref{eq:inn3:uc} reads
\begin{equation}
 \label{eq:inn3b} 
0 = (R_{\alpha,0}(\bV_0 \cdot \bnu - \wnu))_z - \betasum M_{\alpha \beta}\Big( (\Mu_{\beta,1} - \Mu_{N,1})_{zz}  
+ \Big( \frac{z_\beta}{m_\beta} - \frac{z_N}{m_N} \Big) \Phi_{1,zz}\Big)
\end{equation}
for $\alpha =1,\dots,N-1.$
In fact, only the differences $\psi_\beta := \Mu_{\beta,1} -\Mu_{N,1}$ for $\beta=1,...\,,N-1$  are relevant in \eqref{eq:inn3b}.
We will use the Fredholm alternative theorem to determine the solvability conditions for the $(\psi_\beta)_{\beta}.$
To this end, we choose (arbitrary) auxiliary functions $\Xi_\beta \in C^\infty(\R)$ for $\beta=1,...\,,N-1$ such that
\[ \Xi_\beta(z)= \left\{\begin{array}{lll} ( \nabla(\mu_{\beta,0} -
\mu_{N,0}))^+ \cdot \bnu z + (\mu_{\beta,1}^+ - \mu_{N,1}^+) 
&\text{ for }& z >1\\
 (\nabla(\mu_{\beta,0} - \mu_{N,0}))^- \cdot \bnu z + (\mu_{\beta,1}^- -
\mu_{N,1}^-) &\text{ for }& z <-1.
 \end{array} \right.\]
We will see that the solvability conditions are independent of the chosen auxiliary functions.
Defining $\Psi_\beta = \psi_\beta - \Xi_\beta$, we are interested in the
following auxiliary problem: Find $(\Psi_\beta)_{\beta=1,...\,,N-1} \in L^1(\R)$ such that 
\begin{equation}\label{eq:inn3c}
 \betasum (M_{\alpha\beta}(\Psi_\beta)_z)_z = (R_{\alpha,0} (\bV_0 \cdot \bnu - \wnu))_z - \betasum M_{\alpha \beta}\Big(   (\Xi_\beta)_z      + ( \frac{z_\beta}{m_\beta} - \frac{z_N}{m_N}) \Phi_{1,z}\Big)_z.
\end{equation}
The solvability conditions for \eqref{eq:inn3c} are determined by the solutions of the 
 homogenous adjoint system of equations in $L^\infty(\R).$
The homogenous adjoint system of equations reads
\begin{equation}\label{eq:adj} \betasum (M_{\alpha \beta}( Z_\beta)_z)_z
  =0\quad \text{for } \alpha=1,...\,,N-1, \quad (Z_\beta)_{\beta=1,...\,,N-1}. \quad 
\end{equation}
As the matrix $M_{\alpha \beta}$ is constant in $z$ and positive definite there are $N-1$ linearly independent solutions of \eqref{eq:adj} in $L^\infty(\R)$, 
i.e.~$\alpha=1, ...\,,N-1,$ which can be chosen as
\[ Z_\beta(z)= \delta_{\alpha \beta}, \qquad z \in \R.\]
The Fredholm alternative theorem asserts that \eqref{eq:inn3c} is solvable if and only if
\begin{equation}\label{372}
 \int_\R (R_{\alpha,0} (\bV_0 \cdot \bnu - \wnu))_z - \betasum \Big(  M_{\alpha \beta} (\Xi_\beta)_z  + ( \frac{z_\beta}{m_\beta} - \frac{z_N}{m_N}) \Phi_{1,z}   \Big)_z  \dz =0
\end{equation}
for $\alpha=1,...\,,N-1.$
Integrating \eqref{372} gives \eqref{uc:i2b}.
\end{proof}

\begin{proof}[Proof of Theorem \ref{thrm:uc}] 
We obtain the interface conditions and the properties of $\chi_0,\chi_1$ by combining the preceeding lemmas.
Concerning the bulk equations, \eqref{uc:b2} is \eqref{eq:out1:uc}, \eqref{uc:b4} is \eqref{eq:out2:uc}, \eqref{uc:b5} is \eqref{eq:out7:uc}
and \eqref{uc:b3} follows from  \eqref{eq:out4:uc} as $\chi_1=0.$

\end{proof}

\section{Sharp interface limit of the  coupled regime \label{sec:nd}  }

This section is devoted to  establishing the sharp interface limit of the coupled regime, i.e.,  the "small" parameter in 
the electro-static equations is proportional to the thickness of the interfacial layer.
As usual we begin by defining outer, inner and matching solutions.

The outer equations are obtained by inserting expansions of the quantities in $\delta$ into the scaled system of equations.
\begin{definition}\label{def:outersol:co}
A tuple  $( ( \rho_{\alpha,0})_{\alpha}, \bv_0 , \chi_0, \chi_1, \varphi_0)$  with 
\begin{equation}\label{eq:outerreg:co} \begin{split}
\rho_{\alpha,0} &\in C^0([0,T_f) , C^2(\Omega^\pm,\R_+)) \cap C^1([0,T_f) , C^0(\Omega^\pm,\R_+)),\\
\bv_0 &\in C^0([0,T_f) , C^1(\Omega^\pm,\R^3)) \cap C^1([0,T_f) , C^0(\Omega^\pm,\R^3)),\\
\chi_0 &\in  C^0([0,T_f) , C^2(\Omega^\pm,\R)),\\
\chi_1 &\in  C^0([0,T_f) , C^1(\Omega^\pm,\R)),\\
\varphi_0 &\in  C^0([0,T_f) , C^2(\Omega^\pm,\R))
\end{split}\end{equation}
 is called an {\it outer solution of the  coupled regime} provided \eqref{eq:out1:uc}, \eqref{eq:out2:uc}, \eqref{eq:out3:uc}, and
\begin{align}
\label{eq:out4:co}0=&\del_{t} (\rho_0 \bv_0) + \diver (\rho_0 \bv_0 \otimes \bv_0) + 
\nabla \bigg(\alphasum   \rho_{\alpha,0} \mu_{\alpha,0} - \rho f_0\bigg) 
- \nabla ( W'(\chi_0)\chi_1 ),  \\
\label{eq:out6:co}  
0=&  W''(\chi_0)\chi_1 + \frac{\partial \rho f}{\partial \chi}(\rho_{1,0},\dots, \rho_{N,0}, \chi_0) ,\\
\label{eq:out7:co} 0=& \sum_{\alpha=1}^N \frac{z_\alpha}{m_\alpha} \rho_{\alpha,0} 
\end{align}
are satisfied. Note that we used the following abbreviations
\begin{equation}
\mu_{\alpha,0} = \mu_\alpha (\rho_{1,0},\dots, \rho_{N,0}, \chi_0) , \quad \rho f_0 = \rho f(\rho_{1,0},\dots, \rho_{N,0}, \chi_0).
\end{equation}
\end{definition}

The equations defining inner solutions are obtained from the scaled equations by a change of variables and inserting expansions in $\delta.$
\begin{definition}\label{def:innersol:co}
A tuple $( (R_{\alpha,0})_{\alpha},(R_{\alpha,1})_{\alpha},  \bV_0,\Chi_0, \Chi_1, \Phi_0, \Phi_1)$ with $\Chi_0 \not\equiv 0$ and
\begin{equation}\label{eq:innerreg:co}
\begin{split}
R_{\alpha,0} &\in C^0([0,T_f),C^0(U,C^2(\mathbb{R}_+))), \\
R_{\alpha,1} &\in C^0([0,T_f),C^0(U,C^2(\mathbb{R}))), \\
\bV_0 &\in C^0([0,T_f),C^0(U,C^1(\mathbb{R}^3))), \\
\Chi_0 &\in C^0([0,T_f),C^1(U,C^0(\mathbb{R}))) \cap C^0([0,T_f),C^0(U,C^2(\mathbb{R}))),\\
\Chi_1 &\in C^0([0,T_f),C^0(U,C^2(\mathbb{R}))),\\
\Phi_0 &\in C^0([0,T_f),C^0(U,C^2(\mathbb{R}))),\\
\Phi_1 &\in C^0([0,T_f),C^0(U,C^2(\mathbb{R})))
\end{split}
\end{equation}
is called an {\it inner solution of the coupled regime} with normal velocity $\wnu$ provided \eqref{eq:inn4:uc} and 
\begin{align}
 \label{eq:inn4:co} 0 &= ((1+s(\Chi_0))\Phi_{0,z})_z,\\
 \label{eq:inn1:co} 0 &= \left(\betasum M_{\alpha \beta} (\Mu_{\beta,0} - \Mu_{N,0})_z + \left(\frac{z_\beta}{m_\beta} - \frac{z_N}{m_N} \right) \Phi_{0,z} 
\right)_z,\\
 \label{eq:inn2:co} 0 &= \bnu (- W(\Chi_0) + \gamma \bnu \Chi_{0,z}^2 -\frac{\veps_0}{2} (1+s(\Chi_0)) (\Phi_{0,z})^2  )_z, \\
 \label{eq:inn3:co} 0 &= W'(\Chi_0) - \gamma \Chi_{0,zz}- \frac{\veps_0}{2} s'(\Chi_0) (\Phi_{0,z})^2,\\
 \label{eq:inn5:co} 0 &= j_0 \bV_{0,z} + \bnu 
\bigg(\alphasum R_{\alpha,0} \Mu_{\alpha,0} - RF_0 - W'(\Chi_0)\Chi_1 \bigg)_z \\ \nonumber
& \quad + \gamma \bnu (\Chi_{0,z} \Chi_{1,zz} + \Chi_{0,zz} \Chi_{1,z} - \kappa \Chi_{0,z}^2) - \nabla_\Gamma W(\Chi_0) + \gamma  \Chi_{0,zz}\nabla_\Gamma( \Chi_0 ), \\
 \label{eq:inn6:co}0 &=  \frac{j_0}{\tau} \Chi_{0,z} + W''(\Chi_0) \Chi_1 -\gamma \Chi_{1,zz} + \gamma \kappa \Chi_{0,z} +
 \frac{\partial RF_0}{\partial \chi} ,\\
\label{eq:inn2b:co} 0 &= (R_{\alpha,0}(\bV_0 \cdot \bnu - \wnu))_z - \betasum M_{\alpha \beta}\Big( (\Mu_{\beta,1} - \Mu_{N,1})_{zz}  
-\kappa (\Mu_{\beta,0} - \Mu_{N,0})\\
\nonumber & \hspace{4.8cm}\quad+ \Big( \frac{z_\beta}{m_\beta} -
\frac{z_N}{m_N}\Big)
 \Phi_{1,zz} - \kappa \Big( \frac{z_\beta}{m_\beta} - \frac{z_N}{m_N}\Big) \Phi_{0,z}
\Big),\\
\label{eq:inn7:co} 0&=  ((1 + s(X_0))\Phi_{1,z})_z + \sum_{\alpha=1}^N \frac{z_\alpha}{m_\alpha} R_{\alpha,0}
\end{align}
are fulfilled, where we used the abbreviations from \eqref{eq:abbrev}.
Note that we already simplified \eqref{eq:inn5:co}, \eqref{eq:inn6:co} and \eqref{eq:inn7:co} using $\Phi_{0,z}=0$ which is an easy consequence of \eqref{eq:inn4:co},
 provided the matching condition
$\Phi_{0,z}(z) \rightarrow 0$ for $z \rightarrow \pm \infty$ is satisfied.
This seems reasonable to keep the notation short(er) and is justified as we will only consider matching solutions in the sequel.
\end{definition}

We need one more definition to state our theorem. We introduce matching solutions which consist of compatible outer and inner solutions.
\begin{definition}\label{def:matchsol:co}
A tuple $ ( (\rho_{\alpha,0})_{\alpha}, \bv_0 , \chi_0, \chi_1, \varphi_0, (R_{\alpha,0})_{\alpha},(R_{\alpha,1})_{\alpha},
 \bV_0,\Chi_0, \Chi_1, \Phi_0,\Phi_1)$
is called a {\it matching solution of the coupled regime} provided $( ( \rho_{\alpha,0})_{\alpha}, \bv_0 , \chi_0, \chi_1, \varphi_0)$
is an outer- and $((R_{\alpha,0})_{\alpha},(R_{\alpha,1})_{\alpha}, \bV_0,\Chi_0, \Chi_1,\Phi_0,\Phi_1)$
 is an inner solution and both are linked by the matching conditions.
\end{definition}

\begin{theorem}\label{thrm:co}
 Let $( ( \rho_{\alpha,0})_{\alpha}, \bv_0 , \chi_0, \chi_1,\varphi_0,  (R_{\alpha,0})_{\alpha},(R_{\alpha,1})_{\alpha},
 \bV_0,\Chi_0, \Chi_1, \Phi_0,\Phi_1)$
  be a {\bf matching solution of the coupled regime},
then the following equations are satisfied in the bulk regions $\Omega^\pm$:
 \begin{align}
  \label{co:b1} \pm1 =& \chi_0 , \chi_1=0, \\
  \label{co:b2} 0=& \del_{t} \rho_{\alpha,0} + \diver (\rho_{\alpha,0} \bv_0)
- \diver \bigg(\betasum M_{\alpha \beta} \nabla 
\Big(\mu_{\beta,0} - \mu_{N,0} + \Big(\frac{z_\beta}{m_\beta}-\frac{z_N}{m_N}
\Big) \varphi_0 \Big)\bigg)  \\
\nonumber &
- \sum_{i=1}^{N_R} m_\alpha \gamma_\alpha^i M_{\mathrm{r}}^i \bigg(1 -\exp\bigg(  \sum_{\beta=1}^N m_\beta \gamma_\beta^i \mu_{\beta,0}\bigg)\bigg),\\
 \label{co:b4}0=& \del_{t }\rho_0 + \diver(\rho_0 \bv_0) ,\\
  \label{co:b3}0=& \del_{t} (\rho_0 \bv_0) + \diver (\rho_0 \bv_0 \otimes \bv_0) + 
\nabla \left(\alphasum   \rho_{\alpha,0} \mu_{\alpha,0} - \rho f_0  \right) ,\\
  \label{co:b5}0=& \sum_{\alpha=1}^N \frac{z_\alpha}{m_\alpha} \rho_{\alpha,0} .
 \end{align}
 Moreover, the following conditions are fulfilled at the interface:
  \begin{align}
	\label{co:i1}0= & \jump{\mu_{\alpha,0} - \mu_{N,0}}  ,\\
    \label{co:i2}0= & \jump{\rho_0 ( \bv_0 \cdot \bnu - \wnu )} ,\\
    \label{co:i2b}0= & \jump{\rho_{\alpha,0} ( \bv_0 \cdot \bnu - \wnu )} -
\bigg[ \! \bigg[ \betasum M_{\alpha\beta} \nabla\Big(\mu_{\beta,0}-\mu_{N,0} +
  (\frac{z_\beta}{m_\beta}- \frac{z_N}{m_N}) \varphi_0  \Big)\cdot \bnu \bigg]
      \! \bigg],\\
    \label{co:i3}0= & \Bigg[ \! \Bigg[
j_0 \bv_0 + \left(\alphasum   \rho_{\alpha,0} \mu_{\alpha,0} - \rho f_0\right)
\bnu  \Bigg] \! \Bigg]
                      - \gamma \kappa \bnu \int_{-\infty}^\infty  (\Chi_{0,z})^2  \dz,\\
    \label{co:i4} 0=& 
\Big[ \! \Big[ \frac{j_0^2}{2\rho_0^2} + \mu_{N,0}  \Big] \! \Big] + \frac{j_0}{\tau} \int_{-\infty}^\infty \frac{1}{R_0} (\Chi_{0,z})^2 \dz,\\
   \label{co:i5} 0=&\jump{(1+s(\chi_0))\nabla \varphi_0\cdot \bnu} -
   \int_{-\infty}^\infty 
\bigg( \sum_{\alpha=1}^{N}\frac{z_\alpha}{m_\alpha} R_{\alpha,0}(z) \bigg) \dz,\\
      \label{co:i6} 0=&  \jump{\varphi_0},
 \end{align}
where $j_0:=\rho_0^\pm ( \bv_0^\pm \cdot \bnu - \wnu ) $ and $\alpha= 1,...\,, N-1$ in \eqref{co:i1} and \eqref{co:i2b}.
Moreover, \eqref{uc:i6} implies 
\begin{equation}\label{co:i6b}
\jump{\nabla \varphi_0 - (\nabla \varphi_0\cdot \bnu) \bnu}=0.
\end{equation}
 \end{theorem}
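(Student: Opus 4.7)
The overall strategy parallels the proof of Theorem \ref{thrm:uc}: I would decompose the statement into a sequence of lemmas corresponding to the successive orders of the asymptotic expansion and the solvability conditions of the resulting inner equations. The bulk equations \eqref{co:b2}--\eqref{co:b5} are immediate consequences of the outer system in Definition \ref{def:outersol:co} once we know $\chi_0 = \pm 1$ and $\chi_1 = 0$, so the heart of the argument is the analysis of the inner profile and the matching conditions.

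First I would establish that $\Phi_{0,z} \equiv 0$: integrating \eqref{eq:inn4:co} gives $(1+s(\Chi_0))\Phi_{0,z} = k$ for some constant $k \in \R$, and since $s \geq 0$ and the matching condition forces $\Phi_{0,z}(z) \to 0$ as $|z| \to \infty$ (because $\varphi_0$ is a smooth outer quantity while $\Phi_0$ lives at the scale $\delta z$), we conclude $k=0$ and hence continuity \eqref{co:i6}; applying the surface gradient then yields \eqref{co:i6b}. Because $\Phi_{0,z}=0$, equation \eqref{eq:inn3:co} reduces to $W'(\Chi_0) - \gamma \Chi_{0,zz} = 0$, and the phase portrait argument from Lemma \ref{lem:ChiNull} (see \cite{BDDJ07}) can be applied verbatim to get $\chi_0 = \pm 1$ in $\Omega^\pm$ and the one-parameter family of profiles for $\Chi_0$. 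Combined with \eqref{def:interpolation}, this forces $\chi_1 = 0$ via \eqref{eq:out6:co}.

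With $\Phi_{0,z}=0$ and $\Chi_0$ known up to translation, the system \eqref{eq:inn1:co}, \eqref{eq:inn5:co}, \eqref{eq:inn6:co} for the inner densities decouples from $\Phi_1$ exactly as in Lemma \ref{lem:equivalent}, so Lemma \ref{lem:rhoalpha} transported from \cite{DGK14} determines $(R_{\alpha,0})_\alpha$ and yields the generalized Gibbs-Thomson laws \eqref{co:i1}, \eqref{co:i4} and the mass conservation \eqref{co:i2}. The normal momentum balance \eqref{eq:inn5:co} is then to be read as a linear equation $\mathcal{L}\Chi_1 = \textrm{r.h.s.}$ as in Lemma \ref{lem:linear}; since the electric contributions on the right-hand side vanish identically (because $\Phi_{0,z} = 0$), the Fredholm solvability condition reduces to the cleaner form \eqref{co:i3}, without any electric surface stress. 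The tangential part of \eqref{eq:inn5:co} reproduces the argument of Lemma \ref{lem:tanv} and supplies the tangential Rankine-Hugoniot condition. The Stefan-type jump \eqref{co:i2b} comes from the Fredholm solvability of \eqref{eq:inn2b:co} for $(R_{\alpha,1})_\alpha$, again by mimicking Lemma \ref{lem:Stefan} (the extra $\Phi_{1,zz}$ and $\kappa\Phi_{0,z}$ terms integrate to boundary contributions that match the electric drift in the bulk flux, and the $\Phi_{0,z}$ term vanishes).

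The main new step, and the one I expect to be the principal obstacle, is the derivation of the surface-charge jump condition \eqref{co:i5}. Here I would integrate \eqref{eq:inn7:co} in $z$ from $-\infty$ to $+\infty$: this yields
\begin{equation*}
\big[(1+s(\Chi_0))\Phi_{1,z}\big]_{-\infty}^{+\infty} + \int_{-\infty}^{\infty} \sum_{\alpha=1}^{N}\frac{z_\alpha}{m_\alpha} R_{\alpha,0}(z)\,\dz = 0.
\end{equation*}
The matching conditions identify $\Phi_{1,z}(\pm\infty)$ with the normal trace of $\nabla\varphi_0^\pm$, so the boundary term becomes $\jump{(1+s(\chi_0))\nabla\varphi_0\cdot\bnu}$, producing \eqref{co:i5}. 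The delicate point is that $\int \sum_\alpha \frac{z_\alpha}{m_\alpha} R_{\alpha,0}\,\dz$ is in general non-zero in the coupled scaling (unlike in the uncoupled case where the Poisson equation at leading order already enforces electro-neutrality across the layer), so the limit model supports a genuine concentrated surface charge; verifying that this integral is finite and that the matching is consistent to the required order requires care, but is a direct consequence of the decay of $R_{\alpha,0} - \rho_\alpha^\pm$ at $\pm\infty$ from Lemma \ref{lem:rhoalpha}. Collecting the lemmas then assembles the full statement of Theorem \ref{thrm:co}.
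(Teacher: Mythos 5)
Your proposal follows essentially the same route as the paper: the paper likewise reduces Theorem \ref{thrm:co} to the machinery of the uncoupled case via $\Phi_{0,z}=0$ (its Lemma \ref{lem:PhiNull}), reuses Lemmas \ref{lem:ChiNull} and \ref{lem:rhoalpha} after noting that \eqref{eq:inn8:co}--\eqref{eq:inn9:co} coincide with \eqref{eq:inn1:uc} and \eqref{eq:inn8:uc}, applies the same Fredholm arguments for $\Chi_1$, the tangential velocity and $(R_{\alpha,1})_\alpha$, and obtains \eqref{co:i5} exactly as you do, by integrating \eqref{eq:inn7:co} and matching. The only cosmetic difference is that \eqref{co:i2} comes from the constancy of $j_0$ (Remark \ref{rem:normv}) rather than from Lemma \ref{lem:rhoalpha}, which does not affect the argument.
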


\begin{remark}
We can use the evolution equations for $(\rho_{\alpha,0})_\alpha$ together with the closure relation \eqref{co:b5} and the charge conservation 
\[\sum_\alpha \tfrac{z_\alpha}{m_\alpha}r_\alpha=0\]
 to obtain the following elliptic problem
for $\varphi_0:$
\begin{equation}
\label{eq:closure}
0=\diver\left( \sum_{\alpha,\beta=1}^{N-1} M_{\alpha\beta}\left( \left( \frac{z_\alpha}{m_\alpha} - \frac{z_N}{m_N}\right) 
\nabla (\mu_\beta - \mu_N)  +  \left( \frac{z_\alpha}{m_\alpha} - \frac{z_N}{m_N}\right)
\left( \frac{z_\beta}{m_\beta} - \frac{z_N}{m_N}\right) \nabla \varphi_0 \right)\right).
\end{equation}
Note that 
\[ \sum_{\alpha,\beta=1}^{N-1} M_{\alpha\beta}  \left( \frac{z_\alpha}{m_\alpha} - \frac{z_N}{m_N}\right)
\left( \frac{z_\beta}{m_\beta} - \frac{z_N}{m_N}\right)>0,\]
provided $\frac{z_\alpha}{m_\alpha}-\frac{z_N}{m_N}\not= 0$ for at least one $\alpha,$ due to the positive definiteness of $M_{\alpha \beta}.$
\end{remark}

The proof of Theorem \ref{thrm:co} works along the same lines as the proof of Theorem \ref{thrm:uc}. Thus, we will only highlight the differences.

\begin{lemma}\label{lem:PhiNull}
Let $\Phi_0$ be given as in Definition \ref{def:matchsol:co} then $\Phi_{0,z}=0$ and therefore $\jump{\varphi_0}=0.$
\end{lemma}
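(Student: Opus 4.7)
The plan is essentially to mimic the argument used to remove the leading-order electric field jump in the uncoupled regime, but the calculation is in fact easier here because \eqref{eq:inn4:co} is already the $z$-derivative of a linear (rather than quadratic) expression in $\Phi_{0,z}$.

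First I would integrate \eqref{eq:inn4:co} once in $z$, which yields
\[
(1+s(\Chi_0(z)))\,\Phi_{0,z}(z) \;=\; k
\]
for some constant of integration $k$ that depends only on $t$ and the tangential coordinate but not on $z$. Next, I would invoke the matching condition $\Phi_{0,z}(z) \to 0$ as $z \to \pm\infty$ (explicitly noted in Definition \ref{def:innersol:co}) together with the fact that $s(\Chi_0)$ remains bounded as $z \to \pm\infty$ (by Lemma \ref{lem:ChiNull} and the definition of $s$, it approaches $s_{\textrm{L}}$ or $s_{\textrm{V}}$). Passing to the limit shows $k=0$.

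Finally, since the susceptibilities satisfy $s_{\textrm{L}}, s_{\textrm{V}} \geq 0$, we have $1+s(\Chi_0) > 0$ everywhere, so $\Phi_{0,z}\equiv 0$. Hence $\Phi_0$ is independent of $z$ along each normal fibre, and the zeroth-order matching condition then forces the two outer traces to coincide, giving $\varphi_0^+ = \varphi_0^-$, i.e.\ $\jump{\varphi_0} = 0$.

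The only potential obstacle is ensuring strict positivity of $1+s(\Chi_0)$ so that one can divide to conclude $\Phi_{0,z}=0$ rather than merely $(1+s(\Chi_0))\Phi_{0,z}=0$; this is a standing physical assumption on the susceptibilities built into the constitutive model. Otherwise the argument is a one-line ODE integration followed by matching, exactly parallel to the corresponding statement in the uncoupled regime.
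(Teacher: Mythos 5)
Your argument is correct and is essentially identical to the paper's proof: integrate \eqref{eq:inn4:co} once, use the matching condition $\Phi_{0,z}\to 0$ as $z\to\pm\infty$ to kill the integration constant, and then divide by $1+s(\Chi_0)>0$ (which follows from $s\geq 0$) to get $\Phi_{0,z}\equiv 0$ and hence $\jump{\varphi_0}=0$. The paper states this more tersely but takes exactly the same route.
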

\begin{proof}
By integrating \eqref{eq:inn4:co} and using the matching conditions we obtain
\[ (1+s(\Chi_0)) \Phi_{0,z}=0.\]
Because of $s \geq 0$ this implies the claim of the lemma.
\end{proof}

Due to Lemma \ref{lem:PhiNull} equation \eqref{eq:inn3:co} simplifies to 
\begin{equation}\label{eq:inn10:co}0 = W'(\Chi_0) - \gamma \Chi_{0,zz}\end{equation}
and Lemma \ref{lem:ChiNull} applies. 
Concerning the normal velocity Remark \ref{rem:normv} is true in this regime, as well.
Moreover, \eqref{eq:inn1:co} becomes 
\begin{equation}\label{eq:inn8:co}
0 = \left(\betasum M_{\alpha \beta} (\Mu_{\beta,0} - \Mu_{N,0})_z  
\right)_z.
\end{equation}

Similarly to Lemma \ref{lem:equivalent}, we  construct a subsystem of equations from which we can compute the $(R_{\alpha,0})_\alpha.$
\begin{lemma}\label{lem:equivalent:co}
For $\Chi_0$ given as in Lemma \ref{lem:ChiNull} equations \eqref{eq:inn5:co}, \eqref{eq:inn6:co}, \eqref{eq:inn10:co}, \eqref{eq:inn8:co}
are equivalent to
\eqref{eq:inn5:co}, \eqref{eq:inn10:co},  \eqref{eq:inn8:co} and
\begin{equation}\label{eq:inn9:co}
\frac{j_0}{R_0} \left( \frac{j_0}{R_0}\right)_z  + (\Mu_{\alpha,0})_z = -\frac{j_0}{R_0\tau} (\Chi_{0,z})^2,
\end{equation}
for any $\alpha \in \{ 1,\dots,N\},$ where $R_0:= \sum_{\alpha=1}^N R_{\alpha,0}.$
\end{lemma}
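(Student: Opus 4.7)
The plan is to mirror the proof of Lemma \ref{lem:equivalent}, exploiting the fact that in the coupled regime the leading-order inner momentum balance \eqref{eq:inn5:co} and the leading-order Allen-Cahn relation \eqref{eq:inn6:co} contain no explicit electrical contributions (these have been absorbed into higher-order terms by the scaling, and $\Phi_{0,z}=0$ by Lemma \ref{lem:PhiNull}). So the computation will be structurally identical to, but strictly simpler than, that of Lemma \ref{lem:equivalent}. I only need to produce the forward implication, since the reverse direction amounts to reading the same chain of identities from right to left.

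First I would project \eqref{eq:inn5:co} onto $\bnu$ to isolate
\begin{equation*}
j_0 (\bV_0\!\cdot\!\bnu)_z + \Bigl(\alphasum R_{\alpha,0}\Mu_{\alpha,0} - RF_0 - W'(\Chi_0)\Chi_1\Bigr)_z + \gamma\bigl(\Chi_{0,z}\Chi_{1,zz}+\Chi_{0,zz}\Chi_{1,z} - \kappa\Chi_{0,z}^2\bigr) = 0.
\end{equation*}
Next I would multiply \eqref{eq:inn6:co} by $\Chi_{0,z}$ to get
\begin{equation*}
\gamma\bigl(\Chi_{0,z}\Chi_{1,zz}-\kappa\Chi_{0,z}^2\bigr) = \frac{j_0}{\tau}\Chi_{0,z}^2 + W''(\Chi_0)\Chi_1\Chi_{0,z} + \frac{\partial RF_0}{\partial\chi}\Chi_{0,z},
\end{equation*}
and substitute back. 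The two terms $\gamma\Chi_{0,zz}\Chi_{1,z}$ and $-W'(\Chi_0)\Chi_{1,z}$ coming from $(W'(\Chi_0)\Chi_1)_z = W''(\Chi_0)\Chi_{0,z}\Chi_1 + W'(\Chi_0)\Chi_{1,z}$ cancel by virtue of \eqref{eq:inn10:co} (i.e.\ $W'(\Chi_0) = \gamma\Chi_{0,zz}$), as does the $W''(\Chi_0)\Chi_1\Chi_{0,z}$ term. This reduces the normal momentum identity to
\begin{equation*}
j_0 (\bV_0\!\cdot\!\bnu)_z + \Bigl(\alphasum R_{\alpha,0}\Mu_{\alpha,0} - RF_0\Bigr)_z = -\frac{j_0}{\tau}\Chi_{0,z}^2 - \frac{\partial RF_0}{\partial\chi}\Chi_{0,z}.
\end{equation*}

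Now I would invoke the identity $(RF_0)_z = \sum_\alpha \Mu_{\alpha,0}(R_{\alpha,0})_z + \frac{\partial RF_0}{\partial\chi}\Chi_{0,z}$, which follows from $\Mu_{\alpha,0} = \partial(\rho f)/\partial\rho_\alpha$ and the chain rule, to obtain
$(\sum R_{\alpha,0}\Mu_{\alpha,0} - RF_0)_z = \sum R_{\alpha,0}(\Mu_{\alpha,0})_z - \frac{\partial RF_0}{\partial\chi}\Chi_{0,z}$. The $\frac{\partial RF_0}{\partial\chi}\Chi_{0,z}$ contributions then cancel, leaving
\begin{equation*}
j_0(\bV_0\!\cdot\!\bnu)_z + \alphasum R_{\alpha,0}(\Mu_{\alpha,0})_z = -\frac{j_0}{\tau}\Chi_{0,z}^2.
\end{equation*}
Finally, I would integrate \eqref{eq:inn8:co} once, use the matching condition to kill the constant of integration, and invoke positive-definiteness of $(M_{\alpha\beta})$ to deduce $(\Mu_{\beta,0} - \Mu_{N,0})_z = 0$ for all $\beta$. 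Hence $\sum R_{\alpha,0}(\Mu_{\alpha,0})_z = R_0 (\Mu_{\alpha,0})_z$ for any fixed $\alpha$. Substituting $\bV_0\!\cdot\!\bnu = j_0/R_0 + \wnu$ from Remark \ref{rem:normv} and dividing by the strictly positive $R_0$ yields \eqref{eq:inn9:co}.

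The derivation is entirely reversible: starting from \eqref{eq:inn9:co} and retracing the above steps (using \eqref{eq:inn10:co} and \eqref{eq:inn8:co}) recovers the normal part of \eqref{eq:inn5:co} combined with the $\Chi_{0,z}$-multiple of \eqref{eq:inn6:co}, and the original \eqref{eq:inn6:co} can then be recovered as the $\Chi_{1,z}$-independent remainder. Since no genuine obstacle appears -- the electric terms that complicated Lemma \ref{lem:equivalent} are simply absent here -- the main care needed is bookkeeping the cancellations involving $W'(\Chi_0)$, $W''(\Chi_0)\Chi_1$ and $\frac{\partial RF_0}{\partial\chi}\Chi_{0,z}$.
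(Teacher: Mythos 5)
Your proposal is correct and follows exactly the route the paper intends: the paper's proof of Lemma \ref{lem:equivalent:co} consists of the single sentence that it is analogous to the proof of Lemma \ref{lem:equivalent}, and your computation is precisely that analogue (projection of \eqref{eq:inn5:co} onto $\bnu$, elimination of the $\Chi_1$-terms via \eqref{eq:inn6:co} and \eqref{eq:inn10:co}, the chain-rule identity for $(RF_0)_z$, and finally \eqref{eq:inn8:co} with the matching conditions plus positivity of $R_0$), with the correct observation that the step involving \eqref{eq:inn7:uc} to remove the $\Phi_1$-contributions is simply absent here. The bookkeeping of the cancellations matches the paper's, so there is nothing to add.
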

\begin{proof}
The proof is analogous to the proof of Lemma \ref{lem:equivalent}.
\end{proof}

Equations \eqref{eq:inn8:co} and \eqref{eq:inn9:co} coincide with \eqref{eq:inn1:uc} and \eqref{eq:inn8:uc}. Thus, Lemma \ref{lem:rhoalpha} also applies in the scaling 
at hand.

\begin{lemma}\label{lem:linear:co}
The normal part of \eqref{eq:inn5:co} can be written as
\begin{equation}\label{eq:linear:co} \mathcal{L} \Chi_1  = \left(\frac{j_0^2}{\alphasum R_{\alpha,0}}\right)_z  
+ \bigg(\alphasum R_{\alpha,0} \Mu_{\alpha,0} - RF_0 \bigg)_z  - \gamma  \kappa \Chi_{0,z}^2 \end{equation}
with
\[ \mathcal{L} : W^{2,1}(\R) \rightarrow L^1(\R), \quad \Psi \mapsto ( W'(\Chi_0) \Psi - \gamma \Chi_{0,z} \Psi_z)_z.\]
Equation \eqref{eq:linear:co} has a solution if and only if the normal part of
\eqref{co:i3} is satisfied.
\end{lemma}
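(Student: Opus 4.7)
The plan is to mirror the argument of Lemma \ref{lem:linear} from the uncoupled regime, exploiting that in the coupled regime the electric-field contributions to the zeroth-order inner momentum equation have already disappeared because $\Phi_{0,z}=0$ (by Lemma \ref{lem:PhiNull}). Thus the hard work from Lemma \ref{lem:linear} is inherited almost verbatim; I only need to verify that the cleaner right-hand side here is exactly what drops out.

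First I would rewrite the normal part of \eqref{eq:inn5:co}. Dotting with $\bnu$ kills the surface gradient terms $-\nabla_\Gamma W(\Chi_0)+\gamma\Chi_{0,zz}\nabla_\Gamma\Chi_0$, and \eqref{eq:inn4:uc} together with constancy of $j_0$ in $z$ turns $j_0(\bV_0\cdot\bnu)_z$ into $(j_0^2/R_0)_z$. Expanding the $W'(\Chi_0)\Chi_1$ term by the product rule and grouping, one recognises
\[
-\bigl(W'(\Chi_0)\Chi_1\bigr)_z+\gamma\bigl(\Chi_{0,z}\Chi_{1,z}\bigr)_z=-\mathcal L\Chi_1,
\]
using \eqref{eq:inn10:co} to handle the remaining $W'(\Chi_0)\Chi_{1,z}$ and $\gamma\Chi_{0,zz}\Chi_{1,z}$ contributions. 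Rearranging yields \eqref{eq:linear:co}.

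Next I would invoke the Fredholm alternative for $\mathcal L$. The formal adjoint problem $W'(\Chi_0)\Xi_z+\gamma(\Chi_{0,z}\Xi_z)_z=0$ has, in $L^\infty(\mathbb R)$, only constant solutions, as established in \cite[Lemma 4.11]{DGK14}. Hence \eqref{eq:linear:co} is solvable in $W^{2,1}(\mathbb R)$ if and only if the right-hand side has vanishing integral over $\mathbb R$:
\[
\int_{-\infty}^{\infty}\Bigl[\bigl(j_0^2/R_0\bigr)_z+\bigl(\textstyle\sum_\alpha R_{\alpha,0}\Mu_{\alpha,0}-RF_0\bigr)_z-\gamma\kappa\,\Chi_{0,z}^2\Bigr]\dz=0.
\]

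Finally, using the matching conditions, the two exact-derivative integrals produce the jumps $\jump{j_0^2/\rho_0}$ and $\jump{\sum_\alpha\rho_{\alpha,0}\mu_{\alpha,0}-\rho f_0}$, while $\gamma\kappa\int_{-\infty}^{\infty}\Chi_{0,z}^2\dz$ stays on the right. Writing $j_0(\bv_0\cdot\bnu)=j_0^2/\rho_0+j_0\wnu$ (with $j_0\wnu$ continuous across the interface by \eqref{co:i2}), this identity is precisely the normal component of \eqref{co:i3}. The only genuine step beyond bookkeeping is the invocation of the kernel of the adjoint; since that is already cited, I expect no real obstacle — the proof is essentially a direct transcription of Lemma \ref{lem:linear}, cleansed of all terms involving $\Phi_{0,z}$ and $|\nabla_\Gamma\Phi_0|^2$.
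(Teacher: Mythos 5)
Your proposal is correct and follows essentially the same route as the paper: rewrite the normal component of \eqref{eq:inn5:co} as $\mathcal{L}\Chi_1$ equal to the stated right-hand side, apply the Fredholm alternative using that the adjoint problem has only constant bounded solutions by \cite[Lemma 4.11]{DGK14}, and convert the vanishing-integral condition into the normal part of \eqref{co:i3} via the matching conditions and $j_0\bv_0\cdot\bnu = j_0^2/\rho_0 + j_0\wnu$. (Minor remark: the identification $-(W'(\Chi_0)\Chi_1)_z+\gamma(\Chi_{0,z}\Chi_{1,z})_z=-\mathcal{L}\Chi_1$ is pure product rule and does not actually require \eqref{eq:inn10:co}.)
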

\begin{proof}
The only solutions of the homogeneous adjoint problem to \eqref{eq:linear:co}, i.e.,
\begin{equation}\label{eq:adjoint:co}
W'(\Chi_0) \Xi_z + \gamma (\Chi_{0,z} \Xi_z)_z =0
\end{equation}
in $L^\infty$ are given by $\Xi =k$ for $k \in \R,$ see \cite[Lemma 4.11]{DGK14}.
Thus, the solvability condition for \eqref{eq:linear:co} is
\[ 0= \int_{-\infty}^\infty  \left( \Bigg(\frac{j_0^2}{\alphasum R_{\alpha,0}}\Bigg)_z  
+ \bigg(\alphasum R_{\alpha,0} \Mu_{\alpha,0} - RF_0 \bigg)_z  - \gamma
\kappa \Chi_{0,z}^2 \right) \dz,\]
which is equivalent to \eqref{co:i3}.
\end{proof}

Then, we consider the tangential part of $\bV_0$. All the arguments are the same as in Lemma \ref{lem:tanv}, only the $\varphi$ and $\Phi$ terms are not present.
Thus, we obtain:
\begin{lemma}\label{lem:tanv:oc}
The tangential part of  \eqref{eq:inn5:co} 
 has a solution if and only if the tangential part of \eqref{co:i3} holds.
\end{lemma}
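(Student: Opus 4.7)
The plan is to follow the same strategy as in Lemma \ref{lem:tanv} from the uncoupled regime; the argument in the coupled setting is actually simpler because the electric-field contributions to the momentum balance \eqref{eq:inn5:co} have already been removed thanks to Lemma \ref{lem:PhiNull}.

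Concretely, I would choose an arbitrary vector $\bs{t}$ tangent to the zeroth-order interface $\Gamma$ and take the inner product of \eqref{eq:inn5:co} with $\bs{t}$. All terms multiplied by $\bnu$ drop out, leaving
\begin{equation*}
0 = j_0 (\bV_0 \cdot \bs{t})_z \;-\; \nabla_\Gamma W(\Chi_0) \cdot \bs{t} \;+\; \gamma\, \Chi_{0,zz}\, \nabla_\Gamma(\Chi_0) \cdot \bs{t}.
\end{equation*}
Writing $\nabla_\Gamma W(\Chi_0) = W'(\Chi_0)\, \nabla_\Gamma \Chi_0$ and using the profile equation \eqref{eq:inn10:co}, i.e.\ $W'(\Chi_0) = \gamma \Chi_{0,zz}$, the last two terms cancel, so what remains is $j_0 (\bV_0 \cdot \bs{t})_z = 0$.

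This ordinary differential equation in $z$ admits a solution $\bV_0 \cdot \bs{t}$ compatible with the matching conditions as $z \to \pm\infty$ if and only if $j_0 \jump{\bv_0 \cdot \bs{t}} = 0$. Applying this for every tangent vector $\bs{t}$, and noting that $j_0$ is single-valued by \eqref{co:i2}, gives $j_0 \jump{\bv_0 - (\bv_0 \cdot \bnu)\bnu} = 0$, which is exactly the tangential part of \eqref{co:i3} (the terms in \eqref{co:i3} proportional to $\bnu$ do not contribute in a tangent direction, and the surface-tension term is purely normal).

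The only real subtlety is making sure no surviving electrical contribution in \eqref{eq:inn5:co} spoils the cancellation of the two $\gamma$-terms. Since Definition \ref{def:innersol:co} has already incorporated $\Phi_{0,z}=0$ into \eqref{eq:inn5:co}, this is automatic, and the proof becomes a direct analogue of the uncoupled-regime argument.
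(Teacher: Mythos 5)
Your proposal is correct and follows essentially the same route as the paper: the paper's proof of Lemma \ref{lem:tanv:oc} simply invokes the argument of Lemma \ref{lem:tanv}, noting that the $\varphi$ and $\Phi$ terms are absent, which is exactly the cancellation via $\nabla_\Gamma W(\Chi_0)=W'(\Chi_0)\nabla_\Gamma\Chi_0$ and the profile equation that you carry out explicitly. Your reduction to $j_0(\bV_0\cdot\bs t)_z=0$ and the identification of the tangential part of \eqref{co:i3} as $j_0\jump{\bv_0-(\bv_0\cdot\bnu)\bnu}=0$ match the paper's reasoning.
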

Integrating \eqref{eq:inn7:co} leads to the statement:
\begin{lemma}\label{lem:phi1:co}
 Let $(R_{\alpha,0})_\alpha$ and $X_0$ be given. Then, it exists a solution $\Phi_1$ of \eqref{eq:inn7:co} if and only if \eqref{co:i5} holds. 
\end{lemma}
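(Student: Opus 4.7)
The plan is to recognise that \eqref{eq:inn7:co} is a second-order ODE in $z$ for $\Phi_1$ (with $(R_{\alpha,0})_\alpha$ and $X_0$ treated as given coefficients/data), namely
\[ ((1+s(X_0))\Phi_{1,z})_z = -\sum_{\alpha=1}^N \frac{z_\alpha}{m_\alpha} R_{\alpha,0},\]
and to integrate it once, then compare the resulting boundary behaviour at $\pm\infty$ with the matching conditions coming from the outer expansion of $\varphi_0$. The solvability statement then follows because integration of a source term over the whole real line produces exactly one compatibility condition between the two limits.

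First I would integrate once in $z$ from $-\infty$ to $z$, obtaining
\[ (1+s(X_0(z)))\Phi_{1,z}(z) = C - \int_{-\infty}^z \sum_{\alpha=1}^N \frac{z_\alpha}{m_\alpha} R_{\alpha,0}(\tilde z) \, \operatorname{d}\tilde z,\]
with a single integration constant $C\in\mathbb R$. Since $s\ge0$ the factor $1+s(X_0)$ is bounded away from zero, so this algebraically defines $\Phi_{1,z}$ uniquely given $C$; a further primitive then yields $\Phi_1$ up to an additive constant which is irrelevant for \eqref{eq:inn7:co}. Thus existence of a solution $\Phi_1$ reduces to choosing $C$ consistently with the matching conditions at both $z\to-\infty$ and $z\to+\infty$.

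Next I would invoke the matching. By Lemma \ref{lem:ChiNull} one has $X_0(z)\to\pm1$ as $z\to\pm\infty$, so $s(X_0(z))\to s(\chi_0^\pm)$. The standard matching condition for the first-order inner electric potential identifies $\Phi_{1,z}(z)$ with $\nabla\varphi_0^\pm\cdot\bnu$ as $z\to\pm\infty$. Hence the limit $z\to-\infty$ pins down $C=(1+s(\chi_0^-))\,\nabla\varphi_0^-\cdot\bnu$, while the limit $z\to+\infty$ imposes the additional relation
\[ (1+s(\chi_0^+))\,\nabla\varphi_0^+\cdot\bnu \;=\; (1+s(\chi_0^-))\,\nabla\varphi_0^-\cdot\bnu \;-\; \int_{-\infty}^{\infty}\sum_{\alpha=1}^{N}\frac{z_\alpha}{m_\alpha}R_{\alpha,0}(z)\,\operatorname{d}z,\]
which, up to the sign convention adopted in Theorem \ref{thrm:co} for $\jump{\,\cdot\,}$, is precisely \eqref{co:i5}. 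Conversely, if \eqref{co:i5} holds the two matching requirements are compatible with the same $C$, so the construction above produces an admissible $\Phi_1$.

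I do not anticipate a serious obstacle: the computation is essentially a one-dimensional ODE together with Fredholm-type accounting applied to the first (and only non-trivial) integration constant. The one point requiring care is the precise form of the matching condition for $\Phi_{1,z}$ at $\pm\infty$ and the bookkeeping of the sign of $\bnu$, so that the resulting jump expression agrees with \eqref{co:i5} as stated. This is a direct specialisation of the general matching apparatus already used in Lemmas \ref{lem:phi1} and \ref{lem:tanv}.
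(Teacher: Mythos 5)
Your proof is correct and follows exactly the route the paper intends: the paper's entire justification is the one-line remark that integrating \eqref{eq:inn7:co} leads to the statement, and your single integration in $z$ together with the matching of $\Phi_{1,z}$ to $\nabla\varphi_0^\pm\cdot\bnu$ as $z\to\pm\infty$ is precisely that computation carried out explicitly. The sign issue you flag is indeed only a matter of the jump convention, and the convergence of $\int_{-\infty}^{\infty}\sum_{\alpha}\frac{z_\alpha}{m_\alpha}R_{\alpha,0}\,\dz$, which both you and the paper leave implicit, is secured by the bulk electroneutrality \eqref{eq:out7:co} through the matching conditions.
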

Finally, analogous to Lemma \ref{lem:Stefan}, we obtain the following result:
\begin{lemma}\label{lem:Stefan:co}
Equation  \eqref{eq:inn2b:co} 
 has a solution if and only if  \eqref{co:i2b} holds.
\end{lemma}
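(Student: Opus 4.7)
The plan is to mirror the argument of Lemma \ref{lem:Stefan} almost line by line, exploiting the simplifications brought by Lemma \ref{lem:PhiNull}. As a first step, I would use the same diffeomorphism result from \cite{DGK14} cited in the proof of Lemma \ref{lem:Stefan}: for fixed $\chi\in[-1,1]$ the map $(\rho_\alpha)_\alpha \mapsto (\mu_\alpha(\cdot,\chi))_\alpha$ is a diffeomorphism on $(0,\infty)^N$, so solvability of \eqref{eq:inn2b:co} for $(R_{\alpha,1})_\alpha$ is equivalent to solvability for $(\Mu_{\alpha,1})_\alpha$ via \eqref{eq:abbrev}. As in Lemma \ref{lem:Stefan}, only the $N-1$ differences $\psi_\beta := \Mu_{\beta,1}-\Mu_{N,1}$ enter the equation.

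Second, I would simplify \eqref{eq:inn2b:co}. By Lemma \ref{lem:PhiNull} we have $\Phi_{0,z}=0$, so the term $-\kappa(\tfrac{z_\beta}{m_\beta}-\tfrac{z_N}{m_N})\Phi_{0,z}$ drops out. Moreover, from \eqref{eq:inn8:co} combined with the matching conditions we read off $(\Mu_{\beta,0}-\Mu_{N,0})_z=0$, so that $\Mu_{\beta,0}-\Mu_{N,0}$ is $z$-independent and equals its outer limits $\mu_{\beta,0}^\pm-\mu_{N,0}^\pm$ (which by \eqref{co:i1} must coincide). After these reductions, \eqref{eq:inn2b:co} becomes
\begin{equation*}
\betasum \big(M_{\alpha\beta}(\psi_\beta)_z\big)_z = \big(R_{\alpha,0}(\bV_0\cdot\bnu-\wnu)\big)_z - \betasum M_{\alpha\beta}\Big(\tfrac{z_\beta}{m_\beta}-\tfrac{z_N}{m_N}\Big)\Phi_{1,zz} + \kappa\betasum M_{\alpha\beta}(\Mu_{\beta,0}-\Mu_{N,0}).
\end{equation*}

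Third, following Lemma \ref{lem:Stefan}, I would introduce auxiliary functions $\Xi_\beta\in C^\infty(\R)$ that linearly interpolate at $|z|>1$ between the bulk first-order chemical potential jumps $\mu_{\beta,1}^\pm-\mu_{N,1}^\pm$ with slopes $\nabla(\mu_{\beta,0}-\mu_{N,0})^\pm\!\cdot\bnu$, and set $\Psi_\beta := \psi_\beta-\Xi_\beta\in L^1(\R)$. Applying the Fredholm alternative to the constant-coefficient system
\[
\betasum \big(M_{\alpha\beta}(\Psi_\beta)_z\big)_z = \text{r.h.s. as above, rewritten for }\Psi_\beta,
\]
whose homogeneous adjoint kernel in $L^\infty(\R)$ is spanned by the constants $Z_\beta(z)=\delta_{\alpha\beta}$ (by positive definiteness of $(M_{\alpha\beta})$, exactly as in the uncoupled case), gives the solvability conditions: for each $\alpha=1,\dots,N-1$,
\[
\int_\R \Big(\big(R_{\alpha,0}(\bV_0\cdot\bnu-\wnu)\big)_z - \betasum M_{\alpha\beta}\Big(\tfrac{z_\beta}{m_\beta}-\tfrac{z_N}{m_N}\Big)\Phi_{1,zz} - \betasum M_{\alpha\beta}(\Xi_\beta)_{zz}\Big)\,\dz = 0,
\]
the curvature term having already been absorbed upon integration by parts/matching arguments analogous to \cite[proof of Lemma 4.8]{DGK14}. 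Using the matching conditions and Lemma \ref{lem:PhiNull} (so that $\Phi_{1,z}^\pm = \nabla\varphi_0^\pm\cdot\bnu$), integration yields precisely \eqref{co:i2b}, and the choice of $\Xi_\beta$ drops out of the final condition, as in the uncoupled case.

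The main obstacle I foresee is the careful treatment of the term $\kappa\betasum M_{\alpha\beta}(\Mu_{\beta,0}-\Mu_{N,0})$: although $\kappa$ and $\Mu_{\beta,0}-\Mu_{N,0}$ are $z$-independent, this is not a total $z$-derivative and would naively produce a divergent integral in the Fredholm condition. The resolution is that this curvature contribution must be interpreted, through integration by parts of the second-order derivative terms and the linear growth of the $\Xi_\beta$, as a surface divergence contribution that exactly matches the tangential gradient part of $\jump{M_{\alpha\beta}\nabla(\mu_{\beta,0}-\mu_{N,0}+(\tfrac{z_\beta}{m_\beta}-\tfrac{z_N}{m_N})\varphi_0)\cdot\bnu}$ in \eqref{co:i2b}; verifying this cancellation is the delicate bookkeeping step and the only substantive deviation from the proof of Lemma \ref{lem:Stefan}.
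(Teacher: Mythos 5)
Your overall strategy is exactly what the paper intends: the paper gives no separate argument for Lemma \ref{lem:Stefan:co} and simply declares it ``analogous to Lemma \ref{lem:Stefan}'', and your first three steps (diffeomorphism from \cite{DGK14}, reduction to the differences $\psi_\beta$, subtraction of affine auxiliary functions $\Xi_\beta$, Fredholm alternative with constant kernel elements from the positive definiteness of $M_{\alpha\beta}$) reproduce that argument correctly.

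The genuine problem is your final paragraph. The ``resolution'' you propose for the term $\kappa\betasum M_{\alpha\beta}(\Mu_{\beta,0}-\Mu_{N,0})$ cannot work: the jump condition \eqref{co:i2b} contains only the normal component $\nabla(\cdots)\cdot\bnu$ of the flux, so there is no ``tangential gradient part'' for a leftover curvature contribution to cancel against, and a nonzero constant in $z$ integrated over $\R$ in the Fredholm condition is simply divergent --- no integration by parts or matching argument can absorb it. The correct observation is that this obstacle is illusory. Comparing \eqref{eq:inn2b:co} with its uncoupled counterpart \eqref{eq:inn3:uc}, the curvature term arises from the expansion of the Laplacian in interface-adapted coordinates and must carry a $z$-derivative, i.e.\ it reads $-\kappa(\Mu_{\beta,0}-\Mu_{N,0})_z$ (the missing subscript in \eqref{eq:inn2b:co} is evidently a typographical slip); by \eqref{eq:inn8:co} together with the matching conditions one has $(\Mu_{\beta,0}-\Mu_{N,0})_z=0$ --- a fact you yourself invoke in your second step --- so the term vanishes identically, exactly as in the passage from \eqref{eq:inn3:uc} to \eqref{eq:inn3b}. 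With that, \eqref{eq:inn2b:co} reduces verbatim to the system treated in Lemma \ref{lem:Stefan} (the term $-\kappa(\tfrac{z_\beta}{m_\beta}-\tfrac{z_N}{m_N})\Phi_{0,z}$ having already dropped by Lemma \ref{lem:PhiNull}), and the remainder of your argument goes through with no ``delicate bookkeeping step'' at all. You should delete the proposed cancellation mechanism and replace it by this vanishing argument; as written, that step is wrong and would not survive scrutiny.
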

\begin{proof}[Proof of Theorem \ref{thrm:uc}] 
We obtain the interface conditions and the properties of $\chi_0,\chi_1$ by combining the preceeding lemmas.
Concerning the bulk equations, \eqref{co:b2} is \eqref{eq:out1:uc}, \eqref{co:b4} is \eqref{eq:out2:uc}, \eqref{co:b5} is \eqref{eq:out7:co}
and \eqref{co:b3} follows from  \eqref{eq:out4:co} as $\chi_1=0.$
\end{proof}

\section*{Acknowledgment}
The authors would like to thank Clemens Guhlke for fruitful 
discussions and advices on some topics related to the paper.

\bibliographystyle{plain}
\bibliography{refs}
\end{document}